\documentclass[10pt,reqno]{amsart}
\usepackage{a4wide}



\usepackage{amsmath,amsfonts,amsthm,amssymb,amsxtra,dsfont}
\usepackage{amsxtra, amssymb, mathrsfs, pstricks}
\usepackage[english]{babel}
\usepackage{amsmath,amsfonts,amstext,amsbsy,amsopn,amssymb,amsthm,amscd}
\usepackage{amsxtra,latexsym}
\usepackage{exscale}
\usepackage{graphicx,mathrsfs}
\usepackage{psfrag}
\usepackage{paralist,eucal,enumerate}
\usepackage{color,graphics}

\usepackage[colorlinks=true]{hyperref}
\hypersetup{urlcolor=blue, citecolor=blue, linkcolor=blue}

\usepackage{subcaption}



\newtheorem{theorem}{Theorem}[section]
\newtheorem{proposition}[theorem]{Proposition}

\newtheorem{lemma}[theorem]{Lemma}

\theoremstyle{definition}

\newtheorem{definition}[theorem]{Definition}

\theoremstyle{remark}
\newtheorem{remark}[theorem]{\bf Remark}

\numberwithin{equation}{section}


\usepackage{graphicx,mathrsfs}
\setlength{\parindent}{1em}
\setlength{\parskip}{0mm}




\newcommand{\field}[1]{\mathbb{#1}}
\newcommand{\N}{\field{N}}
\newcommand{\R}{\field{R}}




\def\bbV{{\mathbb V}}  
 
  
 \def\calE{{\mathcal E}} \def\calF{{\mathcal F}}
  \def\calI{{\mathcal I}}
  \def\calL{{\mathcal L}}
  
  \def\calR{{\mathcal R}}
  
\def\calV{{\mathcal V}} \def\calW{{\mathcal W}} \def\calX{{\mathcal X}}
 \def\calZ{{\mathcal Z}}
   
\def\rmd{{\mathrm d}} \def\rme{{\mathrm e}}

\def\rmm{{\mathrm m}}   
\def\rmp{{\mathrm p}}

\def\rmD{{\mathrm D}}





\newcommand{\wt}{\widetilde}

 \DeclareMathOperator{\dist}{dist}

\DeclareMathOperator{\Argmin}{Argmin}

\DeclareMathOperator{\diam}{diam}
\DeclareMathOperator{\Lin}{Lin}
\DeclareMathOperator{\Var}{Var}
 \renewcommand{\d}{{\mathrm d}}
 
 \newcommand{\ds}{\,\d s}
 \newcommand{\dt}{\,\d t}

 \newcommand{\dr}{\,\d r}

\newcommand{\abs}[1]{\left\lvert#1\right\rvert}      


 \newcommand{\norm}[1]{\left\lVert#1\right\rVert}

\newcommand{\bnorm}[1]{\bigl\Vert#1\bigr\Vert} 
\newcommand{\Bnorm}[1]{\Bigl\Vert#1\Bigr\Vert} 

\newcommand{\Set}[2]{\{\,#1\,;\,#2\,\}}


\newcommand{\vvp}{\rmp}
\newcommand{\vve}{\rme} 
\newcommand{\vvm}{\rmm} %

\begin{document}

\title[ Parameterized BV-solutions  with 
discontinuous loads]{Existence of parameterized BV-solutions\\ for 
rate-independent systems\\ 
with discontinuous loads}

\author{Dorothee Knees}
\address{Dorothee Knees, Institute of Mathematics, University of Kassel,
Heinrich-Plett Str.~40, 34132 Kassel, Germany. Phone:
+49 0561 8044355}
\email{dknees@mathematik.uni-kassel.de}

\author{Chiara Zanini}
\address{Department of Mathematical Sciences, Politecnico di Torino, 
Corso Duca degli Abruzzi 24, 
10129 Torino, Italy}
\email{chiara.zanini@polito.it}

\date{}

\begin{abstract}
We study a rate-independent system  with non-convex energy and in the 
case of a time-discontinuous loading. We prove existence of 
the rate-dependent viscous regularization by time-incremental problems,  while 
the existence of the so called parameterized $BV$-solutions is obtained via 
vanishing viscosity in a suitable parameterized setting. In addition, we prove 
that the solution set is compact. 
\end{abstract}

\keywords{rate-independent system; discontinuous load; 
parameterized BV-solution; time-incremental minimum problems; vanishing 
viscosity limit.}

\subjclass[2010]{%
Primary: 35R05, 
49J40 
; Secondary: 74C05, 
35Q74, 
35D40, 
49J45. 
}

\maketitle


{\small \tableofcontents}
\section{Introduction}
\label{s:introduction}

In this paper the existence of a solution $z:[0,T]\to\calZ$ of a doubly 
nonlinear problem of the type
\begin{align}
\label{intro.01}
 0\in \partial\calR(\partial_t z(t)) + \rmD\calI(z(t)) - \ell(t),\quad 
z(0)=z_0,\quad t\in [0,T]
\end{align}
is addressed. The focus is on rate-independent systems and hence we assume that 
the dissipation functional  $\calR$ is convex and positively homogeneous of 
degree one. It is further assumed  that the energy functional $\calI$ is 
nonconvex and that the load term $\ell$ is discontinuous in time. It is well 
known that even if $\ell$ is smooth in time, due to the non-convexity of 
$\calI$ the system in general has solutions that are discontinuous in time and 
that also in general there is no uniqueness (see \cite{MieRou15} and references 
therein). In our setting here, a second source for discontinuities is 
introduced 
by the discontinuous load term. We prove the existence of (parameterized) 
balanced viscosity solutions via a vanishing viscosity analysis (Theorem 
\ref{ex.ri.pparam}) and study the compactness of the solution set (Proposition  
\ref{prop.properties_sol_set}). The analysis is carried out in the semilinear 
rate-independent setting introduced in 
\cite{MielkeZelik_ASNPCS14}, compare also \cite[Example 
3.8.4]{MieRou15}, \cite{Knees2018,KneesThomas2018}. 

For a more detailed presentation of the arguments let $\calZ$, $\calV$ be 
Hilbert spaces and $\calX$ a Banach space such that 
$\calZ\Subset\calV\subset\calX$ (compact and continuous embeddings, 
respectively). The dissipation functional $\calR:\calX\to[0,\infty)$ is convex, 
continuous and positively homogeneous of degree one and it is assumed  
to be  equivalent to the norm on $\calX$. The latter assumption simplifies the 
analysis since then $\partial\calR(0)$ is a bounded subset of $\calX^*$. 
However, this assumption rules out the modeling of damage and other 
unidirectional processes. We work in the semilinear setting where  
$\calI:\calZ\to\R$ is of the structure $\calI(z)=\frac12\langle A z,z\rangle 
+\calF(z)$ with a linear and continuous operator $A\in \Lin(\calZ,\calZ^*)$ 
that  is bounded and symmetric (we refer to Section \ref{suse:infiniteloc} for 
the precise assumptions) and a possibly nonconvex functional 
$\calF:\calZ\to[0,\infty)$ that is of lower order with respect to the quadratic 
term in $\calI$.  The loads $\ell$ are taken from $BV([0,T];\calV^*)$. The 
total energy is given by $\calE(t,z)=\calI(z) - \langle \ell(t),z\rangle$. As 
already mentioned, due to the non-convexity of $\calI$ solutions to 
\eqref{intro.01} are discontinuous in time (even if $\ell$ is continuous). 
Several different notions of weak solutions have been introduced in the recent 
literature (see \cite{MieRou15} and references therein) allowing for 
discontinuous solutions, among them the (global) energetic solutions and 
balanced viscosity solutions (BV-solutions). Let us remark that the solution 
concepts are not equivalent.  
Existence of the different solution concepts was 
obtained for more regular data, while the novelty in this paper is to consider 
the case of BV-loading. Existence is studied via vanishing viscosity  
resulting in  BV-solutions. For that purpose, we  consider the  regularized 
problem
\begin{equation}\label{three}
   0\in \partial\calR(\partial_t z_\varepsilon(t)) + \varepsilon\bbV\partial_t 
z_\varepsilon(t) + \rmD_z \calE(t,z_\varepsilon(t)),\quad 
z_\varepsilon(0)=z_0,\ 
\ t\in[0,T]
\end{equation}
obtained by adding the viscous term $\varepsilon\bbV\partial_t z(t)$ ($\bbV$ is 
a linear operator) to \eqref{intro.01} with the  parameter 
$\varepsilon>0$. After having established the existence and uniqueness of 
solutions to the regularized problem (Proposition \ref{prop.unifboundeps}) we 
study the limit $\varepsilon\to 0$. In order to perform the vanishing viscosity 
analysis, the inclusion \eqref{three} is rewritten in a parameterized version, 
i.e. $t\mapsto z_\varepsilon(t)$ is replaced with $s\mapsto (\hat 
t_\varepsilon(s),\hat z_\varepsilon(s))$, where $\hat 
z_\varepsilon(s))=z_\varepsilon(\hat t_\varepsilon(s))$. There are different 
possibilities for choosing the parameterization. We take here the 
paramterization based on the vanishing viscosity contact potential 
(\cite{MRS16}, see 
\eqref{s_eps}). The advantage of this choice  is that viscosity limits 
automatically are normalized in the parameterized picture (see 
\eqref{eq.normalized.sol}). In the convergence proofs we closely follow the 
arguments in \cite{MRS16} and adapt them to our situation. Due to the 
semilinear 
structure of our problem, some stronger statements in particular concerning the 
regularity of solutions (e.g.\ $\rmD\calE \in \calV^*$ instead of $\calZ^*$) 
compared to those in \cite{MRS16} are possible. 
Due to the possible discontinuities of the load term $\ell$ a refined analysis 
of the power term $\int_0^t\langle\ell(r),\partial_t 
z_\varepsilon(r)\rangle\dr$ and its reparameterized version is necessary. 
Observe that in the reparameterized version the function  
$s\mapsto \ell(\hat t_\varepsilon(s))$ appears. Interpreting the power term as 
a Kurzweil integral the limit $\varepsilon\to 0$ can be 
identified. We refer to  \cite{KrejciLiero09} (and Appendix \ref{sec.Kurzweil}) 
for an overview on the properties of the Kurzweil integral. 

In order to perform the  vanishing viscosity analysis, estimates for 
solutions to \eqref{three} are needed that are uniform with respect to the 
viscosity parameter $\varepsilon$. Due to the low regularity of the load term 
$\ell$,  arguments from the literature are not directly 
applicable  since there it is typically assumed and used that $\ell$ has 
temporal $H^1$ or $C^1$-smoothness. The new estimates are stated in 
Propositions 
\ref{prop.basicest} and 
\ref{prop.BV-estimate}. As a new feature these estimates do not depend on the 
length of the time interval $[0,T]$ and the constants in the estimates are 
scaling invariant. This allows for instance to transfer estimates by rescaling 
arguments to different time intervals without changing the constants. This 
observation is exploited in the analysis of solution sets to the system 
\eqref{intro.01}, see Proposition \ref{prop.properties_sol_set}.

This is not the first paper that investigates solutions to rate-independent 
systems with discontinuous loads. 
Let us first mention the article \cite{KrejciLiero09} that is closest to our 
investigations. In contrast to our setting, in \cite{KrejciLiero09}  the energy 
$\calE(t,\cdot)$ is assumed to be strictly convex in $z$ and the dissipation 
potential $\calR$ may depend in a discontinuous way on the time. Starting from 
a 
time incremental minimization problem (without adding additional viscosity) the 
authors prove the existence and uniqueness of solutions within their solution 
class. In addition, if $\calE$ is quadratic, they compare this solution with 
the 
one obtained from a vanishing viscosity analysis. The analysis is carried out 
in 
the physical time and integrals over time intervals are interpreted in the 
Kurzweil sense. 
A  
different  approach  was followed in \cite{Recupero11,RecuperoJCA16} based on 
measure theory tools, and originally was  
developed for  the study of the mapping properties of the  play 
operator, solving variational inequalities associated to sweeping processes 
\cite{Moreau77,KrejciLaurencot}. More precisely, in 
\cite{Recupero11,RecuperoJCA16} the existence results from  
\cite{Moreau77} are re-obtained for discontinuous BV-loadings by using the 
following steps: reparameterize suitably the 
problem by ``filling in the jumps of the loading $\ell$'' in order to obtain a 
Lipschitz-setting, use the better regularity to get existence of a solution, 
and 
then parameterize back to the BV-setting via measure theory arguments (instead 
of time discretization procedure \cite{Moreau77}). This approach works  thanks 
to the fact that 
sweeping processes are rate-independent. The underlying energies in general 
are convex but the set of admissible forces is allowed to depend on time in a 
discontinuous way, \cite{RecuperoSantambrogio18}. 
Translated to our setting this  means that 
$\calR$ in addition depends on the time and that $t\mapsto \calR(t,z)$ is of 
bounded variation. It is shown in 
\cite{Recupero11} that the solution $z$ depends on the parameterization chosen, 
in the sense that, by using segments (geodesics) to fill in the jumps of 
$\ell$, 
one may get a solution different from the vanishing viscosity 
one. We refer to \cite{KrejciRecupero14} for a comparison of the different 
solution concepts. Clearly, a comparison of the parameterized BV-solutions 
derived in this paper with the above mentioned results would clarify the 
relations between all these different approaches. This would require to 
translate back our solutions to the physical time. Due to the length of this 
paper we postpone this comparison to a future paper.

The paper is organized as follows: in Sec.~\ref{suse:infiniteloc} the precise 
assumptions are settled and the basic  and enhanced estimates are derived in 
order to do the limiting analysis. In Sec.~\ref{s-viscousE} we pass to the 
limit 
in the time incremental viscous problems (expressed as usual in this context 
via energy 
balance) and derive existence and uniqueness of solution for $\varepsilon>0$ 
fixed. Then in Sec.~\ref{s-epsto0}, to perform the vanishing viscosity analysis 
$\varepsilon\to0$ we use the reparameterization 
technique originally introduced in \cite{MiEf06} and refined in \cite{MRS16}, 
that  is we rewrite the problem in a suitable parameterized setting, see 
\eqref{s_eps}, and pass to the limit as $\varepsilon\to0$ in this setting. 
Finally, in Sec.~\ref{s-properties} we discuss the regularity properties and 
compactness of the set of ($\vvp$)-parameterized solutions. The paper closes 
with an appendix where basic facts about the Kurzweil integral, about 
absolutely continuous functions and BV-functions and a chain rule are 
collected. 

\section{Basic assumptions and estimates for a time-incremental scheme} 
\label{suse:infiniteloc}
Let $\calX$ be a Banach 
space and $\calZ,\calV$ be separable Hilbert spaces that are densely and 
compactly, resp.\ continuously, embedded in the following way:
\begin{align}
\label{eq.Mief000}
 \calZ\Subset\calV\subset \calX.
\end{align}
Let further $A\in \Lin(\calZ,\calZ^*)$ and $\bbV\in \Lin(\calV,\calV^*)$ be 
linear symmetric, bounded  $\calZ$- and $\calV$-elliptic operators, i.e.\ there 
exist constants $\alpha,\gamma>0$ such that 
\begin{align}
\label{eq.Mief0001}
 \forall z\in \calZ, \forall v\in \calV:\quad 
 \langle Az,z\rangle\geq \alpha\norm{z}^2_\calZ\,,\quad 
\langle \bbV v,v\rangle\geq \gamma\norm{v}^2_\calV\,,
\end{align}
and $\langle Az_1,z_2\rangle =\langle A z_2,z_1\rangle$ for all $z_1,z_2\in 
\calZ$ (and similar for $\bbV$). 
Here, $\langle \cdot,\cdot\rangle$ denotes the duality pairings in $\calZ$ and 
$\calV$, respectively. We define $\norm{v}_\bbV:=\left(\langle \bbV 
v,v\rangle\right)^\frac{1}{2}$, which is a norm that is equivalent to the 
Hilbert space norm $\norm{\cdot}_\calV$. 
Let further 
\begin{align}
\label{eq.Mief00a}
  \calF\in C^2(\calZ;\R)\text{  with } 
\calF\geq 0.
\end{align} 
The functional $\calF$ shall 
play the role of a possibly nonconvex lower order term (cf.\ \cite[Section 
3.8]{MieRou15}). Hence, we assume that  
\begin{align}
\label{ass.F01}
 \rmD\calF\in C^1(\calZ;\calV^*),\quad \norm{\rmD^2\calF(z)v}_{\calV^*}\leq 
C(1 + \norm{z}_\calZ^q)\norm{v}_\calZ
\end{align}
for some $q\geq 1$. 
For the load we assume 
\begin{align}
 \label{eq.Mief00b}
 \ell\in BV([0,T];\calV^*)\,, 
\end{align}
and 
\[
\Var_{\calV^*}(\ell,[a,b])=\sup_\text{partitions $(t_k)$ of 
$[a,b]$}\sum_k  
\norm{\ell(t_k)-\ell(t_{k-1})}_{\calV^*}
\]
denotes the total variation of $\ell$ on 
$[a,b]$ with respect to $\calV^*$.  

Energy functionals of the following type are considered 
\begin{align}
\label{eq.Mief0002}
 \calI:&\calZ \rightarrow \R,\quad \calI(z):=\frac{1}{2}\langle A 
z,z\rangle + 
\calF(z) ,
\\
\calE:&[0,T]\times\calZ\to\R, \quad \calE(t,z)=\calI(z) - \langle 
\ell(t),z\rangle\,.
\end{align}
Clearly, $\calI\in C^1(\calZ;\R)$. 

The dissipation functional  $\calR:\calX\to[0,\infty)$ is assumed to be  
convex, continuous, positively homogeneous 
of degree one and 
\begin{align}
\label{eq.Mief100}
 \exists c,C>0\, \forall x\in \calX:\quad c\norm{x}_\calX\leq \calR(x)\leq 
C\norm{x}_\calX\,.
\end{align}
We refer to Appendix \ref{app.R} for the properties of $\calR$ which 
will be used in the following. 
From \eqref{ass.F01} and \eqref{eq.Mief100} we deduce the following 
interpolation estimate, \cite[Lemma 1.1]{Knees2018}: 
\begin{lemma}
 \label{lem.estDF}
 Assume \eqref{eq.Mief000}, \eqref{eq.Mief00a}, \eqref{eq.Mief100} and 
\eqref{ass.F01}. For every 
$\rho>0$ and $\kappa>0$ there exists $C_{\rho,\kappa}>0$ such that 
for all $z_1,z_2\in \calZ$ with $\norm{z_i}_\calZ\leq \rho$ we have 
\begin{multline}
 \label{est:DF}
 \abs{\langle \rmD\calF(z_1)-\rmD\calF(z_2),z_1 - z_2\rangle}
 \\
 \leq \kappa 
\norm{z_1-z_2}^2_\calZ + 
C_{\rho,\kappa}\min\{\calR(z_1-z_2),\calR(z_2 - z_1)\}\norm{z_1-z_2}_\bbV.
\end{multline} 
\end{lemma}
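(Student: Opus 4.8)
The plan is to combine the quadratic growth bound \eqref{ass.F01} on $\rmD^2\calF$ with the compactness of the embedding $\calZ\Subset\calV$. Abbreviate $w:=z_1-z_2$ and $\zeta_t:=z_2+tw$ for $t\in[0,1]$. Since $\rmD\calF\in C^1(\calZ;\calV^*)$, the fundamental theorem of calculus (Bochner integral in $\calV^*$) gives
\[
\rmD\calF(z_1)-\rmD\calF(z_2)=\int_0^1 \rmD^2\calF(\zeta_t)\,w\dt\quad\text{in }\calV^*.
\]
For $\norm{z_i}_\calZ\le\rho$ one has $\norm{\zeta_t}_\calZ\le\rho$ for all $t\in[0,1]$, so \eqref{ass.F01} yields $\norm{\rmD^2\calF(\zeta_t)\,w}_{\calV^*}\le C(1+\rho^q)\norm{w}_\calZ$; pairing the above identity with $w\in\calZ\subset\calV$ then gives
\[
\abs{\langle \rmD\calF(z_1)-\rmD\calF(z_2),w\rangle}
\le \norm{\rmD\calF(z_1)-\rmD\calF(z_2)}_{\calV^*}\norm{w}_\calV
\le C_\rho\,\norm{w}_\calZ\,\norm{w}_\calV,
\]
with $C_\rho:=C(1+\rho^q)$.

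Next I would trade part of the $\calV$-norm for an $\calX$-norm using the interpolation (Ehrling) inequality attached to the triple $\calZ\Subset\calV\subset\calX$ from \eqref{eq.Mief000}: for every $\delta>0$ there is $C_\delta>0$ with $\norm{w}_\calV\le\delta\norm{w}_\calZ+C_\delta\norm{w}_\calX$ for all $w\in\calZ$, and hence, after squaring and renaming constants, $\norm{w}_\calV^2\le\delta\norm{w}_\calZ^2+C_\delta\norm{w}_\calX^2$ with $\delta>0$ still arbitrary. Applying a weighted Young inequality $C_\rho\norm{w}_\calZ\norm{w}_\calV\le\tfrac{C_\rho\eta}{2}\norm{w}_\calZ^2+\tfrac{C_\rho}{2\eta}\norm{w}_\calV^2$ and then the squared Ehrling bound, the two contributions proportional to $\norm{w}_\calZ^2$ can be made $\le\kappa\norm{w}_\calZ^2$ by first choosing $\eta$ small (in terms of $C_\rho$ and $\kappa$) and then $\delta$ small (in terms of $C_\rho$, $\eta$, $\kappa$); what remains is a term $\le C\,\norm{w}_\calX^2$ with $C=C(\rho,\kappa)$ only.

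Finally I would rewrite $C\norm{w}_\calX^2$ in the form required by \eqref{est:DF}. The continuous embedding $\calV\subset\calX$ gives $\norm{w}_\calX^2\le C\norm{w}_\calX\norm{w}_\calV$; the lower bound in \eqref{eq.Mief100} gives $c\norm{w}_\calX\le\min\{\calR(w),\calR(-w)\}$ (here $\calR$ need not be even, but $\calR(\pm w)\ge c\norm{\pm w}_\calX$); and \eqref{eq.Mief0001} gives $\norm{w}_\calV\le\gamma^{-1/2}\norm{w}_\bbV$. Combining these three inequalities converts $C\norm{w}_\calX^2$ into $C_{\rho,\kappa}\min\{\calR(w),\calR(-w)\}\norm{w}_\bbV$ with $C_{\rho,\kappa}$ depending only on $\rho$, $\kappa$ and the fixed structural constants, which is exactly the asserted estimate (the case $w=0$ being trivial).

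I do not expect a real obstacle: the argument is a Taylor estimate followed by a compactness interpolation. The only points needing care are bookkeeping of the constants — in particular the Ehrling constant $C_\delta$ must be selected \emph{after} $\delta$ has been fixed in terms of $\rho$ and $\kappa$ — and the observation that \eqref{ass.F01} only controls differences of $\rmD\calF$ in $\calV^*$ (not in $\calX^*$), so the pairing with $w$ must be split as $\norm{\cdot}_{\calV^*}\norm{\cdot}_\calV$ and the entire gain of compactness has to be extracted from the embedding $\calZ\Subset\calV$ acting on the test function $w$.
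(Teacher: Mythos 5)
Your argument is correct, and it is essentially the standard proof of this interpolation estimate: a Taylor/mean-value expansion of $\rmD\calF$ along the segment (licit because the segment stays in the ball of radius $\rho$ by convexity of the $\calZ$-norm), the growth bound \eqref{ass.F01} to produce $C_\rho\norm{w}_\calZ\norm{w}_\calV$, Ehrling's lemma from the compact embedding $\calZ\Subset\calV\subset\calX$ combined with Young's inequality to trade the $\calZ$-part for $\kappa\norm{w}_\calZ^2$ at the price of $C_{\rho,\kappa}\norm{w}_\calX^2$, and finally the norm equivalences $c\norm{\cdot}_\calX\le\calR(\pm\cdot)$ from \eqref{eq.Mief100}, the continuous embedding $\calV\subset\calX$, and $\norm{\cdot}_\calV\le\gamma^{-1/2}\norm{\cdot}_\bbV$ from \eqref{eq.Mief0001} to put the remainder in the required product form. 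The paper itself does not supply a proof but cites \cite[Lemma~1.1]{Knees2018}, and your route is the same one used there; the order in which you fix the two small parameters ($\eta$ before $\delta$, so that the Ehrling constant $C_\delta$ is chosen last and may depend on $\rho,\kappa$) is exactly the point that needs care, and you handle it correctly.
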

As a consequence, $\calE$ is  $\lambda$-convex on sublevels. To be more 
precise, we have the following estimate: For every $\rho>0$ there exists 
$\lambda=\lambda(\rho)>0$ such that for all $t\in [0,T]$ and all $z_1,z_2\in 
\calZ$ with $\norm{z_i}_\calZ\leq \rho$ we have
\begin{align}
 \label{est.lambda-convex}
 \langle \rmD_z\calE(t,z_1) - \rmD_z\calE(t,z_2), z_1 - 
z_2\rangle_{\calZ^*,\calZ}\geq \tfrac{\alpha}{2}\norm{z_1 - z_2}_\calZ^2 - 
\lambda\norm{z_1 - z_2}_\bbV^2 
\end{align}
and
\begin{align}
 \label{est.lambda-convex-energy}
 \calI(z_2) - \calI(z_1) \geq \langle \rmD\calI(z_1), z_2 - 
z_1\rangle_{\calZ^*,\calZ} + \tfrac{\alpha}{2}\norm{z_1 - z_2}^2_\calZ
- \lambda \calR(z_2 -z_1)\norm{z_2 - z_1}_\calV.
\end{align}
In the following we replace $\rmD_z\calE(t,z)$ by $\rmD\calE(t,z)$
so that
\[
   \rmD\calE(t,z) = \rmD\calI(z) - \ell(t) = Az + \rmD\calF(z) - \ell(t).
\]
For the proof of the existence theorems we need a further 
assumption on $\calF$: 
\begin{align}
\label{ass.fweakconv}
 \calF:\calZ\rightarrow\R \text{ and }\rmD\calF:\calZ\rightarrow\calZ^* 
\text{ are weak-weak continuous.}
\end{align}
%

In the next lemma we prove a coercivity estimate  for $\calE$ and a 
product estimate which will be used to derive a uniform estimate on 
$\norm{z_k^N}_\calZ$, see Proposition~\ref{prop.basicest} below. Similar 
arguments were used in the proof of \cite[Lemma 3.1]{KrejciLiero09}. 
\begin{lemma}
 \label{lem:basic-energy-estimates}
 Assume \eqref{eq.Mief000}--\eqref{eq.Mief00b}. 
 
 Let $c_0:=\frac{c_\calZ^2}{\alpha}(1 +\norm{\ell}^2_{L^\infty(0,T;\calV^*)})$, 
where $c_\calZ$ is the embedding constant for $\calZ\subset\calV$. Then for 
every $t\in [0,T]$  and $v\in \calZ$ we have
\begin{align}
 \calE(t,v) + c_0&\geq c_\calZ\norm{v}_\calZ\geq  
\norm{v}_\calV\,.\label{eq.basic1} 
\end{align}
A product estimate: Let $\Set{a_k}{1\leq k\leq N}$  with $a_k\geq0$ for 
every $k$, and $c>0$. Then
\[
 \prod_{k=1}^N(1 + ca_k)\leq \exp\left(c\sum_{k=1}^N a_k\right)\,.
\]
As a consequence, let $c>0$, $\ell\in BV([0,T];\calV^*)$ and let $0\leq t_0<t_1 
<\ldots<t_N\leq T$ be an arbitrary partition of $[0,T]$. Then 
\begin{align}
\label{est.prod1}
\prod_{k=1}^N(1 + 
c\norm{\ell(t_k)-\ell(t_{k-1})}_{\calV^*})\leq\exp\left(c\Var_{\calV^*}(\ell,[
t_0 , t_N ] )\right)\,.
\end{align}
\end{lemma}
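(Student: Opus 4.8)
The two parts are independent, so I would treat them separately. For the coercivity estimate \eqref{eq.basic1}, the plan is to expand $\calE(t,v) = \tfrac12\langle Av,v\rangle + \calF(v) - \langle\ell(t),v\rangle$ and bound from below. Since $\calF\geq 0$ by \eqref{eq.Mief00a}, it can simply be dropped. The quadratic term is bounded below by $\tfrac{\alpha}{2}\norm{v}_\calZ^2$ using $\calZ$-ellipticity \eqref{eq.Mief0001}. For the load term, I would write $\abs{\langle\ell(t),v\rangle} \leq \norm{\ell(t)}_{\calV^*}\norm{v}_\calV \leq c_\calZ\norm{\ell}_{L^\infty(0,T;\calV^*)}\norm{v}_\calZ$ using the embedding $\calZ\subset\calV$ with constant $c_\calZ$. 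So $\calE(t,v) \geq \tfrac{\alpha}{2}\norm{v}_\calZ^2 - c_\calZ\norm{\ell}_{L^\infty}\norm{v}_\calZ$. Then I would apply Young's inequality in the form $c_\calZ\norm{\ell}_{L^\infty}\norm{v}_\calZ \leq \tfrac{\alpha}{4}\norm{v}_\calZ^2 + \tfrac{c_\calZ^2}{\alpha}\norm{\ell}_{L^\infty}^2$, leaving $\calE(t,v) \geq \tfrac{\alpha}{4}\norm{v}_\calZ^2 - \tfrac{c_\calZ^2}{\alpha}\norm{\ell}_{L^\infty}^2$. The claimed inequality has the slightly different shape $\calE(t,v)+c_0 \geq c_\calZ\norm{v}_\calZ$ with $c_0 = \tfrac{c_\calZ^2}{\alpha}(1+\norm{\ell}_{L^\infty}^2)$; to get a linear-in-$\norm{v}_\calZ$ lower bound from a quadratic one I would split into the regime $\norm{v}_\calZ \leq$ some threshold (where $c_0$ alone dominates $c_\calZ\norm{v}_\calZ$ after absorbing) and the regime where $\norm{v}_\calZ$ is large (where $\tfrac{\alpha}{4}\norm{v}_\calZ^2$ dominates $c_\calZ\norm{v}_\calZ$), or more cleanly use $\tfrac{\alpha}{4}r^2 + \tfrac{c_\calZ^2}{\alpha} \geq c_\calZ r$ for all $r\geq 0$ (which is exactly $(\tfrac{\sqrt\alpha}{2}r - \tfrac{c_\calZ}{\sqrt\alpha})^2 \geq 0$). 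The second inequality $c_\calZ\norm{v}_\calZ \geq \norm{v}_\calV$ is just the embedding constant definition. One should double-check the exact numerical constants match the paper's $c_0$, but this is routine.

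For the product estimate, the elementary inequality $\prod_{k=1}^N(1+ca_k) \leq \exp(c\sum_{k=1}^N a_k)$ follows immediately from $1+x\leq e^x$ for all $x\geq 0$ (here $x = ca_k\geq 0$), applied termwise and multiplied, since $\prod_k e^{ca_k} = \exp(c\sum_k a_k)$. For the consequence \eqref{est.prod1}, I would set $a_k := \norm{\ell(t_k)-\ell(t_{k-1})}_{\calV^*}\geq 0$ and observe that $\sum_{k=1}^N a_k = \sum_{k=1}^N\norm{\ell(t_k)-\ell(t_{k-1})}_{\calV^*}$ is a particular partition sum in the definition of $\Var_{\calV^*}(\ell,[t_0,t_N])$, hence $\sum_{k=1}^N a_k \leq \Var_{\calV^*}(\ell,[t_0,t_N])$. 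Plugging into the first inequality and using monotonicity of $\exp$ gives \eqref{est.prod1}.

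Neither part presents a genuine obstacle — this lemma is a collection of elementary estimates. The only point requiring a little care is matching the precise form and constants in \eqref{eq.basic1}, i.e.\ getting a bound that is \emph{linear} in $\norm{v}_\calZ$ (rather than the more natural quadratic coercivity bound) with the specific additive constant $c_0$; the completion-of-squares identity $\tfrac{\alpha}{4}r^2 + \tfrac{c_\calZ^2}{\alpha} \geq c_\calZ r$ handles this cleanly.
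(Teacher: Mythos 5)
Your proposal is correct and follows essentially the same route as the paper: drop $\calF\geq0$, use $\calZ$-ellipticity and the embedding to get $\calE(t,v)\geq \tfrac{\alpha}{4}\norm{v}_\calZ^2 - \tfrac{c_\calZ^2}{\alpha}\norm{\ell}_{L^\infty}^2$ via Young, then pass from the quadratic to the linear lower bound by the completion-of-squares inequality $\tfrac{\alpha}{4}r^2+\tfrac{c_\calZ^2}{\alpha}\geq c_\calZ r$ (the paper phrases this as a second Young inequality, which is the same thing). The product estimate is also handled identically; the paper writes $\ln(1+y)\leq y$ where you write $1+x\leq e^x$, an equivalent phrasing.
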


\begin{proof}
 Let $t\in [0,T]$, $v\in \calZ$. By coercivity and Young's inequality
 \begin{align*}
  \calE(t,v)&\geq \tfrac{\alpha}{2}\norm{v}_\calZ^2  
-c_\calZ\norm{\ell(t)}_{\calV^*}\norm{v}_\calZ 
\geq \tfrac{\alpha}{4}\norm{v}^2_\calZ - 
\tfrac{c_\calZ^2}{\alpha}\norm{\ell}_{L^\infty(0,T;\calV^*)}^2\,.
 \end{align*}
Together with 
$\norm{v}_\calV \leq c_\calZ\norm{v}_\calZ \leq \tfrac{c_\calZ^2}{\alpha} + 
\tfrac{\alpha}{4}\norm{v}_\calZ^2$ one obtains \eqref{eq.basic1}.

Proof of the product estimate: Since for $y\geq 0$ we have $\ln (1+y)\leq y$, 
it holds
\begin{align*}
 \prod_{k=1}^N(1 + ca_k)=
\exp(\sum_{k=1}^N\ln(1 + ca_k))\leq \exp(c\sum_{k=1}^Na_k)\,. 
\end{align*}
\end{proof}


We consider viscous regularizations of the 
rate-independent system $(\calE,\calR,\calZ)$ with respect to the 
intermediate space $\calV$. For $\varepsilon\geq 
0$ let
\begin{align*}
 \calR_\varepsilon:\calV\to[0,\infty), \quad \calR_\varepsilon(v):=\calR(v) 
+\tfrac{\varepsilon}{2}\langle \bbV v,v\rangle\,. 
\end{align*}
Properties about $\calR_\varepsilon$, $\varepsilon\geq0$, are collected 
in the Appendix~\ref{app.R}.

We start from the usual time-incremental minimization problems: Let 
$0=t_0<t_1<\ldots <t_N=T$ be an arbitrary partition of $[0,T]$ and let 
$\tau_k:= t_{k} - t_{k-1}$, for $k=1,\ldots,N$. With $z^N_0:=z_0$, for 
$k=1,\ldots,N$ define $z_k^N$ 
recursively via
\begin{align}
\label{eq.time-incr}
 z_k^N\in \Argmin\Set{\calE(t_k,v) + \tau_k\calR_\varepsilon\left( 
(v- z_{k-1}^N)/\tau_k\right)}{v\in \calZ}\,.
\end{align}
Minimizers exist by the direct method in the calculus of variations. 
In the next proposition we collect the basic estimates for the time-incremental 
minimization problems. 
\begin{proposition}
 \label{prop.basicest}
Under the above conditions on $\calE$ and $\calR_\varepsilon$ there exists a 
constant $C>0$ such that 
for all  $\varepsilon\geq 0$, $N\in \N$ and $1\leq k\leq N$ we have, with $c_0$ 
from Lemma~\ref{lem:basic-energy-estimates},
\begin{align}
 \norm{z_k^N}_\calZ &\leq 
 c_\calZ^{-1} (\calE(0,z_0) + c_0) 
\exp(\Var_{\calV^*}(\ell,[0,t_k]))\,,\label{est.unifz1}
\\
0\leq c_0 + \calE(t_k,z_k^N) &\leq (\calE(0,z_0) + c_0) 
\exp(\Var_{\calV^*}(\ell,[0,t_k]))\,, 
\label{est.E}
\\
\sum_{s=1}^N\tau_s\calR_\varepsilon((z_s^N - z_{s-1}^N)/\tau_s) &\leq \wt C\,
\label{est.unifdissip1}
\end{align}
with $\wt C=(\calE(0,z_0) + c_0)\Big(1 
+ \Var_{\calV^*}(\ell,[0,T])\exp(\Var_{\calV^*}(\ell,[0,T]))\Big)$.
The following energy-dissipation estimates are valid 
\begin{align}
 \calE(t_k,z_k^N) 
 + \sum_{s=1}^k\tau_s\calR_\varepsilon((z_s^N - 
z_{s-1}^N)/\tau_s) 
&\leq \calE(t_0,z_0) +\sum_{s=1}^k\langle \ell(t_{s-1}) - \ell(t_s), 
z_s^N\rangle_{\calV^*,\calV}\,,
\label{est.engdiss1}\\
\calI(z_k^N) + \sum_{s=1}^k\tau_s\calR_\varepsilon((z_s^N - z_{s-1}^N)/\tau_s) 
&\leq \calI(z_0) + \sum_{s=0}^{k-1}\langle \ell(t_s) , z_{s+1}^N - z_s^N 
\rangle_{\calV^*,\calV}\,.
\label{est.engdiss2}
\end{align}
\end{proposition}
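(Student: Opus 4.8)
The plan is to start from the minimality property \eqref{eq.time-incr} and derive the two energy-dissipation estimates \eqref{est.engdiss1} and \eqref{est.engdiss2} first, since these are essentially bookkeeping and everything else follows from them. Testing the minimization at step $k$ with the competitor $v=z_{k-1}^N$ gives
\[
 \calE(t_k,z_k^N) + \tau_k\calR_\varepsilon((z_k^N - z_{k-1}^N)/\tau_k)\leq \calE(t_k,z_{k-1}^N) = \calE(t_{k-1},z_{k-1}^N) + \langle \ell(t_{k-1})-\ell(t_k), z_{k-1}^N\rangle.
\]
Summing this telescoping-type inequality over $s=1,\dots,k$ yields \eqref{est.engdiss1}; regrouping the boundary terms (Abel summation, shifting the index on the power term) and using $\calE(t,z)=\calI(z)-\langle\ell(t),z\rangle$ converts it into \eqref{est.engdiss2}. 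I would present \eqref{est.engdiss1}--\eqref{est.engdiss2} as the first step.

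Next I would prove the uniform bounds \eqref{est.unifz1} and \eqref{est.E} simultaneously by induction on $k$, which is the heart of the argument and where the Gronwall-type structure of Lemma \ref{lem:basic-energy-estimates} enters. From \eqref{est.engdiss1}, dropping the nonnegative dissipation sum,
\[
 \calE(t_k,z_k^N)\leq \calE(0,z_0) + \sum_{s=1}^k \norm{\ell(t_s)-\ell(t_{s-1})}_{\calV^*}\norm{z_s^N}_\calV.
\]
By \eqref{eq.basic1} one has $\norm{z_s^N}_\calV\leq c_0 + \calE(t_s,z_s^N)$, so setting $E_k:=c_0+\calE(t_k,z_k^N)\geq 0$ and $\delta_s:=\norm{\ell(t_s)-\ell(t_{s-1})}_{\calV^*}$ we obtain the discrete inequality $E_k\leq E_0 + \sum_{s=1}^k \delta_s E_s$, hence $E_k(1-\delta_k)\leq E_0+\sum_{s=1}^{k-1}\delta_s E_s$. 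A discrete Gronwall argument — absorbing the $\delta_k E_k$ term and then iterating, using the product estimate \eqref{est.prod1} from Lemma \ref{lem:basic-energy-estimates} to bound $\prod(1+c\delta_s)\leq\exp(c\,\Var_{\calV^*}(\ell,[0,t_k]))$ — gives $E_k\leq E_0\exp(\Var_{\calV^*}(\ell,[0,t_k]))$, which is precisely \eqref{est.E}. Then \eqref{eq.basic1} once more, in the form $c_\calZ\norm{z_k^N}_\calZ\leq c_0+\calE(t_k,z_k^N)=E_k$, delivers \eqref{est.unifz1}. The one subtlety to handle carefully is that $1-\delta_k$ may fail to be positive or bounded away from zero for a coarse partition; this is dealt with either by noting $\delta_k$ can be split/controlled via the total variation, or by the standard trick of writing the recursion so that the coefficient multiplying $E_k$ is $\delta_k$ and using $E_k\le (E_{k-1}+\text{rest})/(1-\delta_k)$ only on the (refined) scale where $\delta_k<1$, then passing to the general partition by the additivity of the variation. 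I expect this positivity/absorption bookkeeping to be the main obstacle; everything around it is routine.

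Finally, for the dissipation bound \eqref{est.unifdissip1} I would return to \eqref{est.engdiss1} with $k=N$, keep the dissipation sum, and estimate the right-hand side power term by
\[
 \sum_{s=1}^N \langle\ell(t_{s-1})-\ell(t_s),z_s^N\rangle \leq \sum_{s=1}^N \delta_s\,\norm{z_s^N}_\calV\leq \Big(\sup_s\norm{z_s^N}_\calV\Big)\Var_{\calV^*}(\ell,[0,T]),
\]
then insert the bound $\norm{z_s^N}_\calV\leq E_s\leq (\calE(0,z_0)+c_0)\exp(\Var_{\calV^*}(\ell,[0,T]))$ just obtained, together with $\calE(t_N,z_N^N)\geq -c_0$ on the left, to arrive at $\sum_{s=1}^N\tau_s\calR_\varepsilon((z_s^N-z_{s-1}^N)/\tau_s)\leq (\calE(0,z_0)+c_0)(1+\Var_{\calV^*}(\ell,[0,T])\exp(\Var_{\calV^*}(\ell,[0,T])))=\wt C$. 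Note all constants produced this way depend only on $\calE(0,z_0)$, $c_0$ and $\Var_{\calV^*}(\ell,[0,T])$, and in particular not on $\varepsilon$, $N$, or (after the rescaling-invariant form of \eqref{eq.basic1}) on $T$ beyond the variation — which is exactly the scaling-invariance the authors advertise.
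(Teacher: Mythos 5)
Your overall structure (minimality gives a one-step estimate, sum, bound the power term with \eqref{eq.basic1}, Gronwall via the product estimate of Lemma~\ref{lem:basic-energy-estimates}) is the same as the paper's, and the derivation of \eqref{est.engdiss2} and of \eqref{est.unifdissip1} is fine. But there is a bookkeeping error in the Gronwall step that creates a genuine gap. The one-step inequality obtained by testing \eqref{eq.time-incr} with the competitor $v=z_{k-1}^N$ is
\[
\calE(t_k,z_k^N)+\tau_k\calR_\varepsilon\big((z_k^N-z_{k-1}^N)/\tau_k\big)\le \calE(t_{k-1},z_{k-1}^N)+\langle\ell(t_{k-1})-\ell(t_k),z_{k-1}^N\rangle,
\]
with $z_{k-1}^N$, not $z_k^N$, in the power term (it is the competitor, not the new minimizer, that appears there). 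Bounding $\norm{z_{k-1}^N}_\calV\le c_0+\calE(t_{k-1},z_{k-1}^N)=E_{k-1}$ and adding $c_0$ to both sides yields the \emph{explicit} multiplicative recursion $E_k\le E_{k-1}\,(1+\delta_k)$ with $\delta_k:=\norm{\ell(t_k)-\ell(t_{k-1})}_{\calV^*}$, which then iterates directly to $E_k\le E_0\prod_{s\le k}(1+\delta_s)\le E_0\exp(\Var_{\calV^*}(\ell,[0,t_k]))$. No absorption is needed at all; \eqref{est.prod1} is exactly the tool that finishes this.

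Your version pairs $\delta_s$ with $\norm{z_s^N}_\calV$, producing the implicit inequality $E_k\le E_0+\sum_{s=1}^k\delta_s E_s$, and you then need $\delta_k<1$ to absorb the $s=k$ term. This cannot be ensured: for a BV load with a jump of $\calV^*$-norm $\ge 1$ there are partitions with an interval straddling that jump, so $\delta_k\ge 1$ and $(1-\delta_k)$ is not positive. The proposed fix of refining the partition and ``passing back by additivity of the variation'' does not close this gap, because refining the partition changes the minimizers $z_k^N$ themselves, so the bounds on a refined grid say nothing about $z_k^N$ on the original one, and the statement is claimed to hold uniformly over \emph{arbitrary} partitions. (Your version may have been inherited from the indexing in the displayed \eqref{est.engdiss1}, whose power term as written does not match what the one-step inequality actually yields; once you keep $z_{s-1}^N$ throughout, the implicit Gronwall disappears and your argument coincides with the paper's.)
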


\begin{proof}
  By minimality, we  obtain from \eqref{eq.time-incr} (suppressing the 
  index $N$) 
together with 
\eqref{eq.basic1}
\begin{align}
 \calE(t_k,z_k) & + \tau_k\calR_\varepsilon((z_k - z_{k-1})/\tau_k)\leq 
 \calE(t_{k-1},z_{k-1}) +\langle \ell(t_{k-1}) - \ell(t_k),z_{k-1}\rangle
 \label{est.ed1step}\\
 &
 \leq \calE(t_{k-1},z_{k-1}) 
 + \norm{\ell(t_{k-1}) -\ell(t_k)}_{\calV^*} \norm{z_{k-1}}_\calV
 \nonumber \\
&
\leq \calE(t_{k-1},z_{k-1}) 
+ \norm{\ell(t_{k-1})-\ell(t_k)}_{\calV^*} 
\big(c_0 + \calE(t_{k-1},z_{k-1})\big)\,.
\nonumber 
\end{align}
Adding $c_0$ on both sides yields 
\begin{align*}
 \calE(t_k,z_k) + c_0 \leq \big(\calE(t_{k-1},z_{k-1}) + c_0\big) (1+ 
\norm{\ell(t_{k-1})-\ell(t_k)}_{\calV^*} )\,,
\end{align*}
and by recursion and \eqref{est.prod1}
\begin{align*}
 \calE(t_k,z_k) + c_0 &\leq (\calE(t_0,z_0) + c_0 
)\prod_{s=1}^k(1+\norm{\ell(t_s) - \ell(t_{s-1})}_{\calV^*})
\\
&\leq (\calE(t_0,z_0) + c_0) \exp(\Var_{\calV^*}(\ell,[0,t_k]))\,.
\end{align*}
Together with \eqref{eq.basic1} we arrive at \eqref{est.unifz1} and 
\eqref{est.E}. 
The energy dissipation estimate \eqref{est.engdiss1} follows from 
\eqref{est.ed1step},   again by recursion, while  estimate \eqref{est.engdiss2} 
is nothing else but a consequence of discrete integration by parts in the power 
term. 
Since 
\[ 
 \abs{\sum_{s=1}^k\langle \ell(t_{s-1}) - \ell(t_s), 
z_s\rangle_{\calV^*,\calV}}\leq c_\calZ 
\Var_{\calV^*}(\ell,[0,T])\sup_k\norm{z_k}_\calZ,
\]
from \eqref{est.engdiss1} and \eqref{eq.basic1} (i.e.\ 
$\calE(t_k,z_k)\geq -c_0$)  we finally obtain \eqref{est.unifdissip1}. 
\end{proof}

\begin{remark}
 Let $\triangle_N:=\max\Set{t_k-t_{k-1}}{1\leq k\leq N}$ denote the fineness of 
the partition of $[0,T]$. There exists $m>0$ such that the minimizers $z_k^N$ 
of 
\eqref{eq.time-incr} are 
unique provided that  $\varepsilon>m \triangle_N$.  
Indeed, by \eqref{est.unifz1} the minimizers $z_k^N$ are uniformly bounded with 
respect  to $\varepsilon\geq 0$ and the partitions of $[0,T]$, and they satisfy 
the inclusion 
$0\in \partial\calR(z_k^N -z_{k-1}^N) + \tfrac{\varepsilon}{\tau_k}\bbV(z_k^N - 
z_{k-1}^N )+ \rmD\calE(t_k^N,z_k^N)$. The maximal monotonicity of 
$\partial\calR$ in combination with estimate \eqref{est.lambda-convex} implies 
uniqueness provided that $\varepsilon/\triangle_N>\lambda$ with $\lambda$ from 
\eqref{est.lambda-convex}. 
\end{remark}

In order to carry out the vanishing viscosity analysis we need more refined 
estimates. In the following $\dist_\bbV(\cdot,\partial\calR(0))$  
denotes the distance of an element of $\calV^*$ to 
$\partial\calR(0)\subset\calV^*$, see~\eqref{distV}. 

\begin{proposition}
 \label{prop.BV-estimate} 
 Assume \eqref{eq.Mief000}--\eqref{eq.Mief100}. 
 Assume in addition that $\rmD\calE(0,z_0)\in \calV^*$.  Then
 for all $\varepsilon\geq 0$, all $N\in 
\N$ and all 
partitions $\Pi_N$ of $[0,T]$ we have 
\begin{align}
\sum_{k=1}^N\norm{z_k^N - z_{k-1}^N}_\calZ
+\sup_{1\leq k\leq N} 
\tfrac{\varepsilon}{\tau_k^N}\norm{z_k^N- z_{k-1}^N}_\bbV
&\leq 
C_1
\label{est.bv01}\\
\sup_{1\leq k\leq N}\norm{\rmD\calE(t_k^N,z_k^N)}_{\calV^*}&\leq 
\diam_{\calV^*}(\partial\calR(0)) +C_1\,,
\label{est.bv02}
\end{align}
where $C_1=\dist_\bbV(-\rmD\calE(0,z_0),\partial\calR(0)) + 
c_\bbV\Var_{\calV^*}(\ell,[0,T]) + C_I\wt C$ with $\wt C$ from 
\eqref{est.unifdissip1} and $C_I=C_{\rho,\kappa}$ from \eqref{est:DF} for 
$\kappa=\alpha/2$ and $\rho$ is the right hand side of   \eqref{est.unifz1}. 
Finally, for every $\varepsilon>0$ there  exists a constant $C_\varepsilon>0$ 
such that for all partitions $\Pi_N$ we have
\begin{align}
\label{est.bv04}
 \sum_{k=1}^N\tau_k\Bnorm{\frac{z_k^N - z_{k-1}^N}{\tau_k}}_\calZ^2 \leq 
C_\varepsilon.
\end{align}
\end{proposition}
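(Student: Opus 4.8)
\emph{Strategy and setup.}
The plan is to derive the Euler--Lagrange inclusion for the incremental minimizers, take its discrete time--difference, test with the increment, and read off a one--step recursion for the ``viscous modulus'' $p_k:=\tfrac{\varepsilon}{\tau_k^N}\norm{z_k^N-z_{k-1}^N}_\bbV$; the dissipation bound \eqref{est.unifdissip1} and the total variation of $\ell$ are then summed through this recursion, whose initial value is finite precisely because of the hypothesis $\rmD\calE(0,z_0)\in\calV^*$. I drop the superscript $N$, write $d_k:=z_k-z_{k-1}$, $\eta_k:=\tfrac{\varepsilon}{\tau_k}\bbV d_k\in\calV^*$, and let $\rho$ be the right--hand side of \eqref{est.unifz1}, so that $\norm{z_k}_\calZ\le\rho$ for all $k$ by Proposition~\ref{prop.basicest} (and $\norm{z_0}_\calZ\le\rho$ by \eqref{eq.basic1}). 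Minimality in \eqref{eq.time-incr}, convexity and continuity of $\calR$ on $\calZ$, and one--homogeneity of $\calR$ (see Appendix~\ref{app.R}) give $\xi_k\in\partial\calR(d_k)\subseteq\partial\calR(0)$ with $-\rmD\calE(t_k,z_k)=\xi_k+\eta_k$. Since $\calR\ge0=\calR(0)$ we have $0\in\partial\calR(0)$, and $\partial\calR(0)$ is bounded in $\calX^*$, hence in $\calV^*$, by \eqref{eq.Mief100}; therefore $\rmD\calE(t_k,z_k)=-\xi_k-\eta_k\in\calV^*$ and $\norm{\rmD\calE(t_k,z_k)}_{\calV^*}\le\diam_{\calV^*}(\partial\calR(0))+c\,p_k$ for a constant $c$ depending only on the embeddings, so \eqref{est.bv02} will follow once $\sup_k p_k\le C_1$ is shown.

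\emph{The key recursion.}
For $k\ge2$, subtracting the inclusion at step $k-1$ from that at step $k$ gives
$Ad_k+\rmD\calF(z_k)-\rmD\calF(z_{k-1})+(\xi_k-\xi_{k-1})+(\eta_k-\eta_{k-1})=\ell(t_k)-\ell(t_{k-1})$;
I test with $d_k$ and use: $\langle Ad_k,d_k\rangle\ge\alpha\norm{d_k}_\calZ^2$; $\langle\xi_k-\xi_{k-1},d_k\rangle=\calR(d_k)-\langle\xi_{k-1},d_k\rangle\ge0$ because $\langle\xi_k,d_k\rangle=\calR(d_k)$ and $\langle\xi_{k-1},\cdot\rangle\le\calR(\cdot)$; Lemma~\ref{lem.estDF} with $\kappa=\alpha/2$ (admissible since $\norm{z_{k-1}}_\calZ,\norm{z_k}_\calZ\le\rho$), with constant $C_I:=C_{\rho,\alpha/2}$, to bound $|\langle\rmD\calF(z_k)-\rmD\calF(z_{k-1}),d_k\rangle|$; $\langle\eta_k,d_k\rangle=\tfrac{\varepsilon}{\tau_k}\norm{d_k}_\bbV^2=p_k\norm{d_k}_\bbV$ and, by Cauchy--Schwarz for $\langle\bbV\cdot,\cdot\rangle$, $\langle\eta_{k-1},d_k\rangle\le p_{k-1}\norm{d_k}_\bbV$; and $\langle\ell(t_k)-\ell(t_{k-1}),d_k\rangle\le c_\bbV\norm{\ell(t_k)-\ell(t_{k-1})}_{\calV^*}\norm{d_k}_\bbV$. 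With $\beta_k:=C_I\calR(d_k)+c_\bbV\norm{\ell(t_k)-\ell(t_{k-1})}_{\calV^*}$ this yields
\begin{equation}\label{eq.keyBVrec}
 \tfrac{\alpha}{2}\norm{d_k}_\calZ^2+p_k\norm{d_k}_\bbV\le(\beta_k+p_{k-1})\norm{d_k}_\bbV .
\end{equation}
The step $k=1$ is handled by the same computation after inserting $-\rmD\calE(0,z_0)$: using $Az_0+\rmD\calF(z_0)=\rmD\calE(0,z_0)+\ell(0)$ and, for $\delta>0$, picking $\hat\xi_0\in\partial\calR(0)$ with $\langle-\rmD\calE(0,z_0)-\hat\xi_0,v\rangle\le(\dist_\bbV(-\rmD\calE(0,z_0),\partial\calR(0))+\delta)\norm{v}_\bbV$ for all $v$ (finite by hypothesis), one tests the identity $Ad_1+\rmD\calF(z_1)-\rmD\calF(z_0)=(\ell(t_1)-\ell(0))+(\hat\xi_0-\xi_1)+(-\rmD\calE(0,z_0)-\hat\xi_0)-\eta_1$ with $d_1$, uses $\langle\hat\xi_0-\xi_1,d_1\rangle\le0$, and lets $\delta\to0$ to obtain \eqref{eq.keyBVrec} for $k=1$ with $p_0:=\dist_\bbV(-\rmD\calE(0,z_0),\partial\calR(0))$ in the role of $p_{k-1}$.

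\emph{Conclusion of the three estimates, and the main obstacle.}
Discarding $\tfrac{\alpha}{2}\norm{d_k}_\calZ^2$ in \eqref{eq.keyBVrec} and cancelling $\norm{d_k}_\bbV$ (trivial if $d_k=0$) gives $p_k\le p_{k-1}+\beta_k$; by induction and $\sum_j\calR(d_j)\le\wt C$ (from \eqref{est.unifdissip1}, since $\calR_\varepsilon\ge\calR$), $p_k\le p_0+C_I\wt C+c_\bbV\Var_{\calV^*}(\ell,[0,T])=C_1$ for every $k$, which yields the supremum term in \eqref{est.bv01} and, with the bound above, also \eqref{est.bv02}. Keeping instead the $\calZ$--term, \eqref{eq.keyBVrec} reads $\tfrac{\alpha}{2}\norm{d_k}_\calZ^2\le(\beta_k+p_{k-1}-p_k)\norm{d_k}_\bbV$ with $\beta_k+p_{k-1}-p_k\ge0$; since $\norm{d_k}_\bbV\le c\norm{d_k}_\calZ$ (the compact embedding $\calZ\Subset\calV$ and equivalence of $\norm{\cdot}_\bbV,\norm{\cdot}_\calV$), one factor of $\norm{d_k}_\calZ$ cancels, $\norm{d_k}_\calZ\le\tfrac{2c}{\alpha}(\beta_k+p_{k-1}-p_k)$, and summing telescopes: $\sum_k\norm{d_k}_\calZ\le\tfrac{2c}{\alpha}(p_0+\sum_k\beta_k)\le\tfrac{2c}{\alpha}C_1$, completing \eqref{est.bv01}. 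Finally, for fixed $\varepsilon>0$, inserting $\norm{d_k}_\bbV=\tfrac{\tau_k}{\varepsilon}p_k\le\tfrac{\tau_k}{\varepsilon}C_1$ into the last inequality gives $\norm{d_k}_\calZ^2/\tau_k\le\tfrac{2C_1}{\alpha\varepsilon}(\beta_k+p_{k-1}-p_k)$, whose telescoping sum is $\sum_k\tau_k\norm{d_k/\tau_k}_\calZ^2\le\tfrac{2C_1^2}{\alpha\varepsilon}=:C_\varepsilon$, i.e.\ \eqref{est.bv04}. The point that needs care is that $p_{k-1}$ in \eqref{eq.keyBVrec} cannot be absorbed via $p_{k-1}\le C_1$ --- summing $N$ such bounds would destroy the $N$--independence --- so it must be removed by telescoping; this forces one to \emph{retain} the viscous term $p_k\norm{d_k}_\bbV$ on the left of \eqref{eq.keyBVrec} and then to trade the surviving $\norm{d_k}_\bbV$ on the right for $\norm{d_k}_\calZ$ via the compact embedding. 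Correctly generating the initial term $\dist_\bbV(-\rmD\calE(0,z_0),\partial\calR(0))$ in the first incremental step --- which is exactly where $\rmD\calE(0,z_0)\in\calV^*$ is used --- is the other delicate point.
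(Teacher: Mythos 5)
Your proof is correct, and for \eqref{est.bv01}--\eqref{est.bv02} it tracks the paper's argument almost step for step: both subtract consecutive Euler--Lagrange inclusions, test with $d_k$, invoke Lemma~\ref{lem.estDF} with $\kappa=\alpha/2$, telescope, and seed the recursion at $k=1$ via an element of $\partial\calR(0)$ close to $-\rmD\calE(0,z_0)$ in $\bbV^*$-distance (the paper picks a minimizing $\mu$, you use a $\delta$-approximate one and let $\delta\to0$ --- immaterial). The organizational detail of keeping the product $p_k\norm{d_k}_\bbV$ and splitting off the two conclusions, versus the paper's dividing once by $\norm{d_k}_\bbV$ to get a single telescope, is a cosmetic difference.

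Where your argument genuinely departs from the paper is \eqref{est.bv04}. The paper returns to \eqref{eq.discrincl02}, divides by $\tau_k$, applies the algebraic identity $2a(a-b)=a^2-b^2+(a-b)^2$ to the viscous difference, sums, and absorbs the load terms by a summation by parts and Young's inequality, finally bounding the residual $\sum_k\tau_k^{-1}\norm{d_k}_\calV^2$ via the $\varepsilon$-dependent dissipation estimate \eqref{est.unifdissip1}. You instead reuse your key recursion \eqref{eq.keyBVrec}: from $\tfrac{\alpha}{2}\norm{d_k}_\calZ^2\le(\beta_k+p_{k-1}-p_k)\norm{d_k}_\bbV$ together with $\norm{d_k}_\bbV=\tfrac{\tau_k}{\varepsilon}p_k\le\tfrac{\tau_k}{\varepsilon}C_1$ and the non-negativity $\beta_k+p_{k-1}-p_k\ge0$, dividing by $\tau_k$ and telescoping gives the $H^1(\calZ)$-type bound directly, with $C_\varepsilon\sim C_1^2/(\alpha\varepsilon)$. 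This is leaner and has a small conceptual payoff: it makes visible that \eqref{est.bv04} is a corollary of the very same one-step estimate that produced \eqref{est.bv01}, rather than needing a fresh quadratic computation. Both approaches yield the same $O(\varepsilon^{-1})$ dependence.

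One cosmetic remark: as in the paper, your constant in front of $\sum_k\norm{d_k}_\calZ$ picks up a factor $\tfrac{2c}{\alpha}$ from the embedding, so the precise numerical $C_1$ displayed in the proposition absorbs such factors; this is the same mild imprecision present in the paper's \eqref{eq.doro2}, and not a gap.
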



\begin{remark}
\label{rem.bvestindepT}
Observe first that the constants $C_1$, $C_I$ and $\wt C$ are independent of 
the partition $\Pi_N$ and of $\varepsilon>0$.    Observe further  that the 
constants appearing in \eqref{est.bv01}--\eqref{est.bv02} 
are invariant with respect to a rescaling in time. 
\end{remark}

\begin{proof}[Proof of Proposition \ref{prop.BV-estimate}] 
Choose a partition $\Pi_N$ of $[0,T]$ and $\varepsilon\geq 0$. 
 Let $\Set{z_k}{1\leq k\leq N}$ be minimizers according to \eqref{eq.time-incr} 
(we omit the index $N$). 
Then for all $1\leq k\leq N$  we 
have 
\begin{align}
\label{eq.discrincl01}
-\xi_k:= -\tfrac{\varepsilon}{\tau_k}\bbV(z_k - z_{k-1}) 
-\rmD \calE(t_k,z_k)\in 
\partial\calR(z_k - z_{k-1})\,.
\end{align}
Due to the convexity and one-homogeneity of $\calR$ we obtain 
$-\calR(z_k - z_{k-1})=\langle \xi_k,z_k-z_{k-1}\rangle $ and $\calR(z_k 
-z_{k-1}) \geq \langle -\xi_{k-1}, z_k-z_{k-1}\rangle$, see 
Appendix~\ref{app.R}. 
Hence, after adding these relations and rearranging the terms, 
for $2\leq k\leq N$  
we arrive at
\begin{multline}
\label{eq.discrincl02}
 \tfrac{\varepsilon}{\tau_k}\norm{z_k - z_{k-1}}_\bbV^2 - 
\tfrac{\varepsilon}{\tau_{k-1}}\langle \bbV (z_{k-1}- z_{k-2}), z_k - 
z_{k-1}\rangle + \langle A(z_k - z_{k-1}),z_k - z_{k-1}\rangle 
\\
\leq 
\langle \rmD\calF(z_{k-1}) - \rmD\calF(z_k), z_k - z_{k-1}\rangle +\langle 
 \ell(t_k) - \ell(t_{k-1}), z_k - z_{k-1}\rangle.
\end{multline}
The left hand side can be estimated as 
\begin{align*}
 \text{l.h.s.}\geq \left(\tfrac{\varepsilon}{\tau_k}\norm{z_k - z_{k-1}}_\bbV 
 - \tfrac{\varepsilon}{\tau_{k-1}}\norm{z_{k-1} - z_{k-2}}_\bbV 
 \right)\norm{z_k - z_{k-1}}_\bbV
+\alpha\norm{z_k - z_{k-1}}_\calZ^2,
\end{align*}
where $\alpha>0$ is the constant from \eqref{eq.Mief0001}. 
For the right hand side we 
deduce from Lemma \ref{lem.estDF} (where we choose 
$\kappa=\frac{\alpha}{2}$ and $\rho$ according to the right hand side in 
\eqref{est.unifz1}) 
that 
\begin{align*}
 \text{r.h.s.}\leq \tfrac{\alpha}{2}\norm{z_k - z_{k-1}}_\calZ^2 + C(\calR(z_k 
- z_{k-1}) 
+ \bnorm{\ell(t_k) -  \ell(t_{k-1})}_{\calV^*})
\norm{z_k - z_{k-1}}_\bbV. 
\end{align*}
Observe that  $C>0$ 
is independent of $\varepsilon$ and of the 
partition of $[0,T]$. 
 Joining both inequalities  we obtain for all $k\in \{2,\ldots,N\}$  
 \begin{multline*}
  \tfrac{\varepsilon}{\tau_k}\norm{z_k - z_{k-1}}_\bbV
+\tfrac{\alpha}{2 c_\calZ}\norm{z_k - z_{k-1}}_\calZ
\\
\leq   \tfrac{\varepsilon}{\tau_{k-1}}\norm{z_{k-1}- z_{k-2}}_\bbV 
+ C(\calR(z_k 
- z_{k-1}) + \norm{\ell(t_k) - \ell(t_{k-1})}_{\calV^*}),
 \end{multline*}
 where $c_\calZ>0$ is the embedding constant for $\calZ\subset \calV$. 
Summation with respect to $k$ finally yields (for $2\leq K\leq N$)
\begin{multline}
\label{eq.doro1}
\tfrac{\varepsilon}{\tau_K}\norm{z_K - z_{K-1}}_\bbV +  
\tfrac{c\alpha}{2}\sum_{k=2}^K\norm{z_k - z_{k-1}}_\calZ
\\
\leq 
\tfrac {\varepsilon}{\tau_1}\norm{z_1 - z_0}_\bbV +
C\sum_{k=2}^K (\calR(z_k - z_{k-1}) 
+ \norm{ \ell(t_k) - \ell(t_{k-1})}_{\calV^*})
\end{multline}
 Let now $k=1$. Choose $\mu\in 
\partial\calR(0)$ such that 
\[
\dist_\bbV(-\rmD\calE(0,z_0),\partial\calR(0))=\norm{\mu + 
\rmD\calE(0,z_0)}_\bbV.
\]
Together with \eqref{eq.discrincl01} (for $k=1$) and 
from the one-homogeneity of $\calR$  we obtain 
\begin{align*}
 0&\geq \langle\rmD\calE(t_1,z_1) + \mu, z_1 - z_0\rangle + 
\tfrac{\varepsilon}{\tau_1}\langle \bbV(z_1 - z_0), (z_1 - z_0)\rangle\\
&=\langle \rmD\calE(0,z_0) + \mu, z_1 - z_0\rangle + 
\langle 
\rmD\calE(t_1,z_1) - \rmD\calE(0,z_0), z_1 - z_0\rangle + 
\tfrac{\varepsilon}{\tau_1}\norm{z_1 - z_0}_\bbV^2\,.
\end{align*}
By the structure of $\rmD\calE$ and after rearranging the terms we obtain 
\begin{align}
\label{eq.doro3}
 &\tfrac{\varepsilon}{\tau_1}\norm{z_1 - z_0}_\bbV^2
 + \alpha\norm{z_1 - z_0}_\calZ^2
 \\
 \nonumber 
& \leq 
-\langle\rmD\calE(0,z_0)+\mu,z_1 - z_0\rangle 
+\langle (\rmD\calF(z_0) -\rmD\calF(z_1)) + (\ell(t_1) - \ell(t_0)),z_1 - 
z_0\rangle
\\
\nonumber 
&\leq  \tfrac{\alpha}{2}\norm{z_1 - z_0}^2_\calZ + 
\Big( \dist_\bbV(-\rmD\calE(0,z_0),\partial\calR(0)) 
+ \norm{\ell(t_1) - \ell(t_0)}_{\bbV^*} \big.
\\
&\qquad\qquad\qquad\qquad\qquad\qquad\qquad\qquad\qquad\qquad\big. + C\calR(z_1 
- z_0)\Big)\norm{z_1 - z_0}_\bbV .
\nonumber 
\end{align}
For the last estimate we used the definition of $\mu$ and similar estimates as 
for the case $k\geq 2$. Similar to the case $k\geq 2$ we further obtain
\begin{multline*}
 \tfrac{\varepsilon}{\tau_1}\norm{z_1-z_0}_\bbV +\tfrac{\alpha}{2 
c_\calZ}\norm{z_1 - z_0}_\calZ 
\\
\leq 
\dist_\bbV(-\rmD\calE(0,z_0),\partial\calR(0)) +C\big(\norm{\ell(t_1) - 
\ell(t_0)}_{\calV^*} + \calR(z_1 - z_0)\big)\,.
\end{multline*}
Adding the last estimate to \eqref{eq.doro1} finally results in 
\begin{multline}
\label{eq.doro2}
 \tfrac{\varepsilon}{\tau_K}\norm{z_K - z_{K-1}}_\bbV +\tfrac{\alpha}{2 
c_\calZ}\sum_{k=1}^K\norm{z_k - z_{k-1}}_\calZ
\\
\leq 
\dist_\bbV(-\rmD\calE(0,z_0),\partial\calR(0)) 
+ C \Var_{\calV^*}(\ell,[0,t_K]) 
+ C\sum_{k=1}^K\calR(z_k - z_{k-1}) \,, 
\end{multline}
which is valid for $1\leq K\leq N$. 
 Thanks to Proposition 
\ref{prop.basicest} the right hand 
side is uniformly bounded with respect to $\varepsilon\geq 0$ and the 
partitions 
 of $[0,T]$ and we have shown estimate  \eqref{est.bv01}. 
 
In order to prove \eqref{est.bv02} observe that 
$\partial\calR(0)\subset\calZ^*$ can be identified with a  subset of 
$\calV^*$ that is bounded  with respect to the 
$\calV^*$-norm, see \cite[Lemma A.1]{Knees2018}. 
Hence, for $k\geq 1$ from \eqref{eq.discrincl01} we conclude 
$-\rmD\calE(t_k,z_k)\in 
\partial\calR(0) + \frac{\varepsilon}{\tau_k}\bbV(z_k - z_{k-1})\subset\calV^*$ 
and thus 
by \eqref{est.bv01} we ultimately arrive at \eqref{est.bv02}.

For the proof of \eqref{est.bv04} we start again from \eqref{eq.discrincl02}. 
Using $2a(a-b)=a^2 - b^2 + (a-b)^2$, for the first two terms we obtain after 
dividing by $\tau_k$ for $k\geq 2$
\begin{multline*}
 \tfrac{\varepsilon}{2}\norm{\tfrac{z_k - z_{k-1}}{\tau_k}}_\bbV^2 
+\tfrac{\varepsilon}{2}\norm{\Big(\tfrac{z_k - z_{k-1}}{\tau_{k}}\Big) 
- \Big(\tfrac{z_{k-1} - z_{k-2}}{\tau_{k-1}}\Big)}_\bbV^2
+\alpha\tau_k\norm{\tfrac{z_{k} - z_{k-1}}{\tau_{k}}}^2_\calZ
\\
\leq 
 \tfrac{\varepsilon}{2}\norm{\tfrac{z_{k-1} - z_{k-2}}{\tau_{k-1}}}_\bbV^2 
 +\langle \rmD\calF(z_{k-1}) - \rmD\calF(z_k), \tfrac{z_k - 
z_{k-1}}{\tau_k}\rangle 
+\langle  \ell(t_k) -  \ell(t_{k-1}),\tfrac{z_k - 
z_{k-1}}{\tau_k}\rangle.
\end{multline*}
Summation with respect to $2\leq k\leq N$ and adding 
($\tau_1^{-1}\ast$\eqref{eq.doro3}) yields 
\begin{align}
\tfrac{\varepsilon}{2} \norm{\tfrac{z_{N} - z_{N-1}}{\tau_N}}_\bbV^2
&+\tfrac{\varepsilon}{2} \norm{\tfrac{z_{1} - z_{0}}{\tau_1}}_\bbV^2
\nonumber 
\\
\nonumber
&\phantom{\leq} 
+ \tfrac{\varepsilon}{2}\sum_{k=2}^N \norm{\Big(\tfrac{z_k - 
z_{k-1}}{\tau_{k}}\Big) 
- \Big(\tfrac{z_{k-1} - z_{k-2}}{\tau_{k-1}}\Big)}_\bbV^2
+
\alpha\sum_{k=1}^N \tau_k\norm{\tfrac{z_{k} - z_{k-1}}{\tau_k}}^2_\calZ
\\ 
\nonumber 
&\leq 
-\langle\rmD\calE(0,z_0) + \mu,\tfrac{z_1 - z_0}{\tau_1}\rangle 
\\\nonumber 
&\phantom{\leq}+ 
\sum_{k=1}^N\langle \rmD\calF(z_{k-1}) - \rmD\calF(z_k), \tfrac{z_k - 
z_{k-1}}{\tau_k}\rangle 
+\langle \ell(t_k) - \ell(t_{k-1}),\tfrac{z_k - 
z_{k-1}}{\tau_k}\rangle
\\
&=:T_0 + T_1 + T_2.
\label{eq.discrincl03}
\end{align}
Clearly, $\abs{T_0}\leq 
\dist_\bbV(-\rmD\calE(0,z_0),\partial\calR(0))\norm{(z_1-z_0)/\tau_1}_\bbV$.  
With \eqref{est:DF} and \eqref{eq.Mief100}, the  term $T_1$  is estimated as
\begin{align*}
 \abs{T_1}\leq \tfrac{\alpha}{2}\sum_{k=0}^N \tfrac{1}{\tau_k}
 \norm{z_k -z_{k-1}}_\calZ^2 + C_\alpha\sum_{k=0}^N\tfrac{1}{\tau_k}
 \norm{z_k -z_{k-1}}_\calV^2.
\end{align*}
In the  term $T_2$ we shift once more the indices and obtain
\begin{multline*}
 \abs{T_2}
 \leq \abs{\langle \ell(t_N),\tfrac{z_N - z_{N-1}}{\tau_N}\rangle} + 
  \abs{\langle \ell(t_1),\tfrac{z_1 - z_{0}}{\tau_1}\rangle} + 
\sum_{k=1}^{N-1}\abs{\langle \ell(t_k),\tfrac{z_k-z_{k-1}}{\tau_k} 
-\tfrac{z_{k+1}-z_{k}}{\tau_{k+1}}\rangle}   
\\
\leq \tfrac{\varepsilon}{4}\left(\norm{\tfrac{z_N - z_{N-1}}{\tau_N}}_\bbV^2 + 
\norm{\tfrac{z_1 - z_{0}}{\tau_1}}_\bbV^2 + 
\sum_{k=1}^N\norm{\Big(\tfrac{z_k - 
z_{k-1}}{\tau_{k}}\Big) 
- \Big(\tfrac{z_{k-1} - z_{k-2}}{\tau_{k-1}}\Big)}_\bbV^2\right)
+ C_\varepsilon\bnorm{ \ell}_{L^\infty(0,T;\calV^*)}^2,
\end{multline*}
where in the last line we applied the Young inequality. Inserting these 
estimates into \eqref{eq.discrincl03}, rearranging the terms and neglecting 
some nonnegative terms on the left hand side we finally arrive at
\begin{multline*}
 \tfrac{\alpha}{2}\sum_{k=0}^N \tau_k\norm{\tfrac{z_{k-1} - 
z_{k-2}}{\tau_k}}^2_\calZ
\\
\leq C_\varepsilon
\left(\dist_\bbV(-\rmD\calE(0,z_0);\partial\calR(0))+\norm{\ell}_{
L^\infty((0 , T);\calV^*) }
\right)^2 + 
C_\alpha\sum_{k=0}^N\tfrac{1}{\tau_k}
 \norm{z_k -z_{k-1}}_\calV^2. 
\end{multline*}
By \eqref{est.unifdissip1}, the last term on the right hand side is bounded 
by $C\varepsilon^{-1}$, uniformly in~$N$. This proves~\eqref{est.bv04}.
\end{proof}

\section{Existence and uniqueness of viscous solutions}
\label{s-viscousE}
The  aim of this section is to prove the existence of solutions to the 
following system
for $\varepsilon>0$ and given initial value $z_0\in \calZ$:
\begin{align}
\label{eq.viscincl01}
 0\in \partial\calR_\varepsilon(\dot z(t)) + 
\rmD\calE(t,z(t)),\quad z(0)=z_0\,.
\end{align}
\begin{definition}
\label{def.viscsol}
 Let $\varepsilon>0$, $\ell\in BV([0,T];\calV^*)$, $z_0\in \calZ$.  A function 
$z\in H^1([0,T];\calV)$ $\cap L^\infty((0,T);\calZ)$ is a weak solution to 
\eqref{eq.viscincl01} if $z(0)=z_0$ and if the inclusion \eqref{eq.viscincl01}  
is satisfied for 
almost all $t\in [0,T]$.  
\end{definition}
As is common in the study of rate independent systems it is more convenient to 
work with an equivalent formulation, namely De Giorgi's energy dissipation 
principle.
\begin{lemma}\label{lem:EDP}
 Let  $z\in H^1([0,T];\calV)\cap L^\infty((0,T);\calZ)$ with 
$z(0)=z_0\in \calZ$. The following properties are equivalent: 
\begin{enumerate}
\item[(a)] $z$
is a weak solution to 
\eqref{eq.viscincl01} in the sense of Definition \ref{def.viscsol}.
\item[(b)] For all $t\in [0,T]$ we have
\begin{align}
 \label{eq.EDP01}
 \calI(z(t)) +\int_0^t\calR_\varepsilon(\dot z(s)) + 
\calR_\varepsilon^*(-\rmD\calE(s,z(s)))\ds =  \calI(z_0) + 
\int_0^t\langle\ell(s),\dot z(s)\rangle \ds. 
\end{align}
\item[(c)] For all $t\in [0,T]$ we have
\begin{align}
 \label{eq.EDP02}
 \calI(z(t)) +\int_0^t\calR_\varepsilon(\dot z(s)) + 
\calR_\varepsilon^*(-\rmD\calE(s,z(s)))\ds \leq   \calI(z_0) + 
\int_0^t\langle\ell(s),\dot z(s)\rangle \ds. 
\end{align}
\end{enumerate}
If $z$ satisfies any of these properties then $Az\in 
L^\infty((0,T);\calZ^*)\cap L^2((0,T);\calV^*)$. 
\end{lemma}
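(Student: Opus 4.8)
The plan is to deduce the three equivalences from the two standard ingredients of De~Giorgi's formulation: a chain rule for $t\mapsto\calI(z(t))$ along the curve $z$, and the Fenchel--Young (in)equality for the convex potential $\calR_\varepsilon$. As a preliminary I would record that $\partial\calR_\varepsilon(v)=\partial\calR(v)+\varepsilon\bbV v$ with $\partial\calR(v)\subseteq\partial\calR(0)$, and that $\partial\calR(0)$ is a bounded subset of $\calV^*$ by \cite[Lemma A.1]{Knees2018}. Consequently, whenever $-\rmD\calE(t,z(t))\in\partial\calR_\varepsilon(\dot z(t))$ holds a.e., one has $\rmD\calE(t,z(t))\in\calV^*$ with $\norm{\rmD\calE(t,z(t))}_{\calV^*}\le\diam_{\calV^*}(\partial\calR(0))+\varepsilon\norm{\bbV}\norm{\dot z(t)}_\calV$; since $\dot z\in L^2((0,T);\calV)$, $\ell\in L^\infty((0,T);\calV^*)$ and $\rmD\calF(z(\cdot))\in L^\infty((0,T);\calV^*)$ (by \eqref{ass.F01} and $z\in L^\infty((0,T);\calZ)$), this gives $Az=\rmD\calE(\cdot,z(\cdot))+\ell-\rmD\calF(z(\cdot))\in L^2((0,T);\calV^*)$, while $Az\in L^\infty((0,T);\calZ^*)$ is immediate from $A\in\Lin(\calZ,\calZ^*)$ and $z\in L^\infty((0,T);\calZ)$. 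This is exactly the concluding regularity statement, once any of (a)--(c) is shown to be in force.

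The second step is the chain rule: for every $z\in H^1([0,T];\calV)\cap L^\infty((0,T);\calZ)$ with the extra integrability $\rmD\calE(\cdot,z(\cdot))\in L^2((0,T);\calV^*)$ (equivalently $Az\in L^2((0,T);\calV^*)$) the scalar map $t\mapsto\calI(z(t))$ is absolutely continuous and
\[
 \calI(z(t))-\calI(z_0)=\int_0^t\langle Az(s)+\rmD\calF(z(s)),\dot z(s)\rangle\ds
 =\int_0^t\langle\rmD\calE(s,z(s)),\dot z(s)\rangle\ds+\int_0^t\langle\ell(s),\dot z(s)\rangle\ds.
\]
This is the chain rule collected in the appendix; I would obtain it from the identity $\tfrac12\langle Az(t+h),z(t+h)\rangle-\tfrac12\langle Az(t),z(t)\rangle=\langle A\tfrac{z(t+h)+z(t)}{2},z(t+h)-z(t)\rangle$, from $h^{-1}(z(t+h)-z(t))\to\dot z(t)$ in $\calV$ at Lebesgue points together with the uniform $\calZ$-bound on $z$, and from the weak--weak continuity \eqref{ass.fweakconv} of $\rmD\calF$ for the lower-order term. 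I expect this chain rule to be the main obstacle, precisely because $A$ maps only into $\calZ^*$ while $\dot z$ lives in $\calV$, so the pairing in the integrand is legitimate only after the extra integrability $Az\in L^2((0,T);\calV^*)$ is known; fortunately each of the hypotheses (a) and (c) supplies it, as noted below.

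For the implication (a)$\Rightarrow$(b) I would argue as follows. By Step~1 the inclusion forces $\rmD\calE(\cdot,z(\cdot))\in L^2((0,T);\calV^*)$, so the chain rule of Step~2 applies. The inclusion $-\rmD\calE(t,z(t))\in\partial\calR_\varepsilon(\dot z(t))$ is equivalent to the Fenchel equality $\calR_\varepsilon(\dot z(t))+\calR_\varepsilon^*(-\rmD\calE(t,z(t)))=\langle-\rmD\calE(t,z(t)),\dot z(t)\rangle$ for a.e.\ $t$; integrating this over $[0,t]$ and replacing $\int_0^t\langle-\rmD\calE(s,z(s)),\dot z(s)\rangle\ds$ by $\calI(z_0)-\calI(z(t))+\int_0^t\langle\ell(s),\dot z(s)\rangle\ds$ via the chain rule yields the balance (b). The step (b)$\Rightarrow$(c) is trivial.

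For (c)$\Rightarrow$(a), assume (c) holds for all $t$ with finite right-hand side. Since $\calR_\varepsilon\ge0$, $\calR_\varepsilon^*\ge0$ and $\int_0^T\calR_\varepsilon(\dot z)\ds<\infty$, the inequality with $t=T$ forces $\int_0^T\calR_\varepsilon^*(-\rmD\calE(s,z(s)))\ds<\infty$; as $\calR_\varepsilon^*$ is finite only on $\calV^*$ and coercive there (it controls $\dist_{\calV^*}(\cdot,\partial\calR(0))^2$), this gives $\rmD\calE(\cdot,z(\cdot))\in L^2((0,T);\calV^*)$ and hence $Az\in L^2((0,T);\calV^*)$, so Step~2 applies. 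Inserting the chain-rule identity into (c) we get
\[
 \int_0^t\Big(\calR_\varepsilon(\dot z(s))+\calR_\varepsilon^*(-\rmD\calE(s,z(s)))+\langle\rmD\calE(s,z(s)),\dot z(s)\rangle\Big)\ds\le0\,;
\]
by Fenchel--Young the integrand is $\ge0$ a.e., hence vanishes a.e., which is exactly the Fenchel equality characterising $-\rmD\calE(s,z(s))\in\partial\calR_\varepsilon(\dot z(s))$ a.e., i.e.\ (a). Finally, whenever (a)--(c) hold, Step~1 delivers $Az\in L^\infty((0,T);\calZ^*)\cap L^2((0,T);\calV^*)$, which closes the proof.
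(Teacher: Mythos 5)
Your proposal is correct and follows essentially the same route as the paper: it combines the chain rule for $t\mapsto\calI(z(t))$ (Proposition~\ref{prop.chainrules}) with the Fenchel--Young (in)equality for $\calR_\varepsilon$, first observing that whichever of (a)--(c) one assumes forces the extra integrability $Az\in L^\infty((0,T);\calZ^*)\cap L^2((0,T);\calV^*)$ needed to justify the chain rule. The slightly more explicit bookkeeping you add (the formula $\partial\calR_\varepsilon(v)=\partial\calR(v)+\varepsilon\bbV v$, the bound via $\diam_{\calV^*}(\partial\calR(0))$, the quadratic coercivity of $\calR_\varepsilon^*$, and the derivation that the nonnegative Fenchel defect integrand must vanish a.e.) just fills in the ``tools from convex analysis'' that the paper leaves implicit.
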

\begin{proof}
 The proof follows standard arguments relying on convex analysis and the  
chain rules provided in Proposition \ref{prop.chainrules}, 
see e.g.\ \cite[Proposition E.1]{KneesThomas2018}.

Indeed, let $z$ be a weak solution to \eqref{eq.viscincl01}.  The fact that  
$\partial\calR(0)$ can be identified with a subset of 
$\calV^*$ that is bounded with respect to the norm in $\calV^*$, and the 
assumptions on $\calF$ and $\ell$ imply that  $Az\in 
L^\infty((0,T);\calZ^*)\cap L^2((0,T);\calV^*)$. Convex analysis 
arguments and the chain rule provided in Proposition \ref{prop.chainrules}  
yield the identity 
\begin{multline*}
 \calR_\varepsilon(\dot z(t)) + \calR_\varepsilon^*(-\rmD\calE(t,z(t)))
 \\
 = 
 \langle -\rmD\calI(z(t)),\dot z(t)\rangle_{\calV^*,\calV} 
 +\langle 
\ell(t),\dot z(t)\rangle_{\calV^*,\calV}
=-\tfrac{\rmd}{\rmd t}\calI(z(t))
+ \langle 
\ell(t),\dot z(t)\rangle_{\calV^*,\calV}
\end{multline*}
that is valid for almost all $t$. Integration with respect to $t$ implies 
\eqref{eq.EDP01}. From this, \eqref{eq.EDP02} is an obvious consequence. 

Assume now that $z$ satisfies \eqref{eq.EDP02}. Since 
$\int_0^T\calR^*_\varepsilon(-\rmD\calE(r,z(r)))\dr<\infty$, it follows that 
$\rmD\calE(\cdot,z(\cdot))\in L^2(0,T;\calV^*)$ and in particular that $Az\in 
L^\infty((0,T);\calZ^*)\cap L^2((0,T);\calV^*)$. By the Fenchel 
inequality and the chain rule we deduce
\begin{multline*}
 \int_0^t\langle -\rmD\calE(s,z(s)),\dot z(s)\rangle \ds
 \leq \int_0^t \calR_\varepsilon(\dot z(s)) + 
\calR_\varepsilon^*(-\rmD\calE(s,z(s)))\ds
\\
\overset{\eqref{eq.EDP02}}{\leq}
\calI(z_0) - \calI(z(t)) + \int_0^t\langle \ell(s),\dot z(s)\rangle\ds
=\int_0^t\big(-\frac{\rmd}{\rmd t}\calI(z(s)) \big)+\langle \ell(s),\dot 
z(s)\rangle\ds.
\end{multline*}
Hence, \eqref{eq.EDP01} is valid. Localizing the integral identity and using 
once more the tools from convex  analysis finally shows that $z$ is a weak 
solution. 
\end{proof}

 For $\ell\in BV([0,T];\calV^*)$ let $\ell_-$ and $\ell_+$ denote the left and 
the right continuous representative. The identity \eqref{eq.EDP01} reveals that 
the weak solutions of \eqref{eq.viscincl01} for  $\ell$ are also weak 
solutions for  $\ell_+$ and $\ell_-$.

\begin{proposition}
\label{prop.unifboundeps}
 Assume \eqref{eq.Mief000}--\eqref{eq.Mief100}. 
 For every $\ell\in BV([0,T];\calV^*)$, $z_0\in \calZ$ and $\varepsilon>0$ 
there exists a unique weak solution $z_\varepsilon$ of \eqref{eq.viscincl01}. 
This solution coincides with the weak solutions for $\ell_+$ and $\ell_-$. 
Moreover, $\sup_{\varepsilon>0} 
\norm{z_\varepsilon}_{L^\infty((0,T);\calZ)}<\infty$. 

If in addition we assume that $\rmD\calE(0,z_0)\in \calV^*$,  
then the 
weak solution belongs to 
$H^1((0,T);\calZ)$ and 
there exists a constant $C>0$ such that for all $\varepsilon>0$  the 
corresponding  weak solution satisfies 
\begin{align}
\label{est.bv03}  
\norm{z_\varepsilon}_{W^{1,1}((0,T);\calZ)} 
 + \varepsilon\norm{\dot z_\varepsilon}_{L^\infty(0,T;\calV)}
 + \norm{\rmD\calE( \cdot, z_\varepsilon)}_{L^\infty((0,T);\calV^*)}\leq C. 
\end{align}
\end{proposition}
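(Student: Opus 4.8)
The plan is to obtain the viscous solution $z_\varepsilon$ as the limit of the time-incremental scheme \eqref{eq.time-incr} for a sequence of partitions $\Pi_N$ with vanishing fineness $\triangle_N\to 0$, with $\varepsilon>0$ fixed. First I would introduce the piecewise-constant and piecewise-affine interpolants $\bar z_N$, $\underline z_N$, $\hat z_N$ of the discrete values $z_k^N$. By Proposition \ref{prop.basicest} these are bounded in $L^\infty((0,T);\calZ)$ uniformly in $N$ (and in $\varepsilon$), which already gives the last claim $\sup_{\varepsilon>0}\norm{z_\varepsilon}_{L^\infty((0,T);\calZ)}<\infty$ once convergence is established. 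The estimate \eqref{est.bv04} of Proposition \ref{prop.BV-estimate} gives, for fixed $\varepsilon>0$, a uniform-in-$N$ bound on $\hat z_N$ in $H^1((0,T);\calZ)$; hence along a subsequence $\hat z_N\rightharpoonup z_\varepsilon$ weakly in $H^1((0,T);\calZ)$, and by Aubin--Lions strongly in $C([0,T];\calV)$ (using $\calZ\Subset\calV$). One checks $\bar z_N,\underline z_N\to z_\varepsilon$ in the same topologies and $z_\varepsilon(0)=z_0$.

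Next I would pass to the limit in the discrete energy-dissipation inequality. Summing \eqref{est.ed1step} (equivalently using \eqref{est.engdiss1}) and invoking the minimality in \eqref{eq.time-incr} together with a discrete Fenchel/chain-rule argument produces the upper energy-dissipation estimate
\begin{align*}
 \calI(\bar z_N(t)) + \int_0^t\calR_\varepsilon(\dot{\hat z}_N(s))\,\d s
 + \int_0^t\calR_\varepsilon^*(-\rmD\calE(\underline\vartheta_N(s),\bar z_N(s)))\,\d s
 \leq \calI(z_0) + (\text{power term}),
\end{align*}
where the power term involves $\ell$ evaluated at partition points paired with the increments of $z$. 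Because $\ell\in BV([0,T];\calV^*)$ and $\hat z_N$ is bounded in $H^1((0,T);\calV)$, the discrete power term converges to $\int_0^t\langle\ell(s),\dot z_\varepsilon(s)\rangle\,\d s$ (up to the countable jump set of $\ell$, which has measure zero); here the weak-weak continuity \eqref{ass.fweakconv} of $\calF$ and $\rmD\calF$ is used to pass to the limit in $\calI(\bar z_N(t))\to\calI(z_\varepsilon(t))$ and in $\rmD\calE(\cdot,\bar z_N)\rightharpoonup\rmD\calE(\cdot,z_\varepsilon)$, while lower semicontinuity of the convex integrands $\calR_\varepsilon$ and $\calR_\varepsilon^*$ (the latter via Ioffe's theorem, using $\rmD\calE(\cdot,\bar z_N)\rightharpoonup\rmD\calE(\cdot,z_\varepsilon)$ in $L^2((0,T);\calV^*)$) handles the dissipation terms. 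This yields \eqref{eq.EDP02}, so by Lemma \ref{lem:EDP} the limit $z_\varepsilon$ is a weak solution; the chain rule in Proposition \ref{prop.chainrules} upgrades \eqref{eq.EDP02} to the equality \eqref{eq.EDP01}. The assertion that $z_\varepsilon$ coincides with the weak solutions for $\ell_\pm$ follows from the remark preceding the proposition, since \eqref{eq.EDP01} only sees $\ell$ through $\int_0^t\langle\ell,\dot z_\varepsilon\rangle$, which is unaffected by modifying $\ell$ on its (countable) jump set.

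For uniqueness I would use $\lambda$-convexity on sublevels: if $z_1,z_2$ are two weak solutions, both lie in a fixed ball of $\calZ$ by the a priori bound, one tests the two inclusions \eqref{eq.viscincl01} against $z_1-z_2$, uses monotonicity of $\partial\calR$, the $\calV$-ellipticity of $\varepsilon\bbV$ and estimate \eqref{est.lambda-convex}, to get $\frac{\rmd}{\rmd t}\norm{z_1(t)-z_2(t)}_\bbV^2 \le C\norm{z_1(t)-z_2(t)}_\bbV^2$ with $C=C(\varepsilon)$, whence Gr\"onwall and $z_1(0)=z_2(0)$ give $z_1\equiv z_2$; this also shows the whole sequence $\hat z_N$ (not just a subsequence) converges. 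Finally, under the extra hypothesis $\rmD\calE(0,z_0)\in\calV^*$, estimates \eqref{est.bv01}--\eqref{est.bv02} of Proposition \ref{prop.BV-estimate} are uniform in $N$ and in $\varepsilon$; passing to the limit $N\to\infty$ (lower semicontinuity of the $W^{1,1}((0,T);\calZ)$-seminorm under weak $H^1$-convergence, and of the $L^\infty$-norms) yields \eqref{est.bv03} with a constant $C$ independent of $\varepsilon$. The main obstacle is the limit passage in the power term and in $\int_0^t\calR_\varepsilon^*(-\rmD\calE)\,\d s$ in the presence of the merely $BV$ load $\ell$: one must argue that the set of discontinuities of $\ell$ is Lebesgue-negligible and control the discrete power term $\sum_s\langle\ell(t_{s-1})-\ell(t_s),z_s^N\rangle$ carefully (using the uniform bounds $\sum_k\norm{z_k^N-z_{k-1}^N}_\calZ\le C_1$ from \eqref{est.bv01} together with $\Var_{\calV^*}(\ell,[0,T])<\infty$), rather than invoking the usual $H^1$-in-time regularity of $\ell$.
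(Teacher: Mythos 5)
Your overall architecture matches the paper's: time-incremental scheme, interpolants, discrete energy--dissipation inequality, compactness, limit passage, and a Gr\"onwall argument for uniqueness. Two steps, however, do not go through as you wrote them.

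First, to obtain compactness of the interpolants you invoke \eqref{est.bv04}, which gives a uniform-in-$N$ bound in $H^1((0,T);\calZ)$. But \eqref{est.bv04} (like all of Proposition~\ref{prop.BV-estimate}) is only valid under the extra hypothesis $\rmD\calE(0,z_0)\in\calV^*$, i.e.\ $Az_0\in\calV^*$, which the first part of the proposition does \emph{not} assume. For general $z_0\in\calZ$ you must get by with the weaker bound $\norm{\wt z_N}_{H^1((0,T);\calV)}\leq C/\sqrt{\varepsilon}$, which follows directly from the dissipation estimate \eqref{est.unifdissip1} because $\calR_\varepsilon(v)\geq\tfrac{\varepsilon}{2}\norm{v}_\bbV^2$. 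Together with the uniform $L^\infty((0,T);\calZ)$ bound, this already gives weak $H^1(\calV)$-convergence, pointwise weak $\calZ$-convergence, and weak $L^2(\calV)$-convergence of the velocities --- which is exactly what is needed to pass to the limit in the discrete energy--dissipation inequality and the power term. (Incidentally, Aubin--Lions is available even from these weaker bounds, but it is not needed; pointwise weak convergence suffices.) The sharper $H^1(\calZ)$ regularity and the bound \eqref{est.bv03} only enter in the second part of the statement, under the additional hypothesis. As written, your existence argument would leave the proposition unproved for initial data with $Az_0\notin\calV^*$.

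Second, in the uniqueness argument you state that you test the difference of the inclusions against $z_1-z_2$ and then use the monotonicity of $\partial\calR$. This does not close: monotonicity of $\partial\calR$ yields $\langle\xi_1-\xi_2,\dot z_1-\dot z_2\rangle\geq 0$ for selections $\xi_i\in\partial\calR(\dot z_i)$, so the test function must be $\dot z_1-\dot z_2$. With that test function the viscosity term produces $\varepsilon\norm{\dot z_1-\dot z_2}_\calV^2\geq 0$, the quadratic part of $\rmD\calE$ produces $\tfrac12\tfrac{\rmd}{\rmd t}\langle A(z_1-z_2),z_1-z_2\rangle$, and after estimating the $\rmD\calF$-difference via \eqref{ass.F01} and Young's inequality (absorbing $\tfrac\varepsilon2\norm{\dot z_1-\dot z_2}_\calV^2$), one arrives at a Gr\"onwall inequality in the $\calZ$-norm (equivalently the $A$-norm), not the $\bbV$-norm. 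If you pair against $z_1-z_2$, the term $\langle\xi_1-\xi_2,z_1-z_2\rangle$ has no sign and cannot be absorbed, and no differential inequality of the form $\tfrac{\rmd}{\rmd t}\norm{z_1-z_2}_\bbV^2\leq C\norm{z_1-z_2}_\bbV^2$ follows.

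Finally, a remark on the power term: the paper's cleanest route is to write it as $\int_0^t\langle\ell(\bar t_N(s)),\dot{\wt z}_N(s)\rangle\ds$, observe $\bar t_N(s)\searrow s$ so that $\ell\circ\bar t_N\to\ell_+$ pointwise and, by dominated convergence, strongly in $L^2((0,T);\calV^*)$, and then pair with the weak $L^2(\calV)$-convergence of $\dot{\wt z}_N$. Your alternative of controlling the summation-by-parts form $\sum_s\langle\ell(t_{s-1})-\ell(t_s),z_s^N\rangle$ is workable but more delicate; either way, the key point that $\ell$ and $\ell_+$ differ only on a Lebesgue-null set is correct.
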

\begin{remark}
\label{rem.viscindepT}
The constant in 
  \eqref{est.bv03} has the same structure as the constants in 
  \eqref{est.bv01}--\eqref{est.bv02}. 
\end{remark}

\begin{proof}[Proof of Proposition \ref{prop.unifboundeps}]
Uniqueness of weak solutions: 

For $i\in \{1,2\}$ let $\ell_i\in \{\ell,\ell_+,\ell_-\}$ and let $z_i$ be a 
weak solution for \eqref{eq.viscincl01} corresponding to  $\ell_i$ with 
$z_i(0)=z_0$. Since $\partial\calR$ is maximal monotone, the inclusion  
\eqref{eq.viscincl01} implies
\begin{multline*}
 \langle A( z_1(t)- z_2(t)),\dot z_1(t) - \dot z_2(t)\rangle_{\calV^*,\calV} + 
\varepsilon\norm{\dot z_1(t) -\dot z_2(t)}^2_\calV 
\\\leq \langle 
\rmD\calF(z_2(t))-\rmD\calF(z_1(t)) 
+ ( \ell_1(t) - \ell_2(t)) , 
\dot z_1(t) - \dot z_2(t)\rangle_{\calV^*,\calV}, 
\end{multline*}
which is valid for almost all $t\in [0,T]$. Integration with respect to $t$ 
yields
\begin{multline*}
 \tfrac{\alpha}{2}\norm{z_1(t) - z_2(t)}_{\calZ}^2 
+\varepsilon\int_0^t\norm{\dot z_1(s) -\dot  z_2(s)}^2_\calV\ds 
\\
\leq
\tfrac{\alpha}{2}\norm{z_1(0) - z_2(0)}_{\calZ}^2 
+\int_0^t  \langle \rmD\calF(z_2(s)) 
-\rmD\calF(z_1(s)),\dot z_1(s) -\dot z_2(s)\rangle\ds 
\\+ \int_0^t\norm{\ell_1(s) - \ell_2(s)}_{\calV^*}
\norm{\dot z_1(s) -\dot z_2(s)}_\calV\ds.
\end{multline*}
Observe that the first and the last term on the right hand side are zero since 
$\ell_1$ and $\ell_2$ differ at most on a countable set.  
Thanks to \eqref{ass.F01} and Young's inequality the integral on the right hand 
side can be estimated as
\begin{align*}
 \int_0^t\langle \rmD\calF(z_2(s)) 
& -\rmD\calF(z_1(s)) ,\dot z_1(s) -\dot z_2(s)\rangle\ds 
 \\
&\leq C\int_0^t \norm{z_1(s)-z_2(s)}_\calZ
\norm{\dot z_1(s) - \dot z_2(s)}_\calV\ds\\
&\leq \int_0^t\tfrac{\varepsilon}{2}\norm{\dot z_1(s) -\dot  z_2(s)}^2_\calV\ds 
+ 
C_\varepsilon\int_0^t\norm{z_1(s) - z_2(s)}_\calZ^2\ds.
\end{align*}
Joining these inequalities and applying the Gronwall Lemma finishes the proof 
of uniqueness.

Existence of weak solutions: 

 Let $\varepsilon>0$ be fixed. Let $(\Pi_N)_{N\in \N}$ be a sequence of 
partitions of $[0,T]$ with fineness $\triangle_N\searrow 0$ and let 
$(z_k^N)_{k\leq N}$ 
be minimizers of \eqref{eq.time-incr}. We introduce the following piecewise 
affine and piecewise linear interpolants: 
\begin{gather*}
 \wt z_N(t):=z_{k-1}^N + \tfrac{t-t_{k-1}}{\tau_k}(z^N_k - z_{k-1}^N),\quad 
t\in [t_{k-1}^N,t_k^N],\\
\underline{z}_N(t):=z_{k-1}^N, \quad 
t\in [t_{k-1}^N,t_k^N); \,\,\,\,\, 
\overline z_N(t):= z_k^N, \,\,  
\bar t_N(t):=t_k^N, \quad 
t\in (t_{k-1}^N,t_k^N].
\end{gather*}
By Proposition \ref{prop.basicest} the functions $\wt z_N$, $\bar z_N$, 
$\underline{z}_N$ are uniformly bounded (w.r.\ to $N$ and $\varepsilon$) in the 
space $L^\infty((0,T);\calZ)$. Moreover, we have 
\begin{align}        
 \label{est.prvisc01}
 \norm{\wt z_N}_{H^1((0,T);\calV)}\leq C/\sqrt{\varepsilon}
\end{align}
 with a 
constant $C>0$ that is 
independent of the partition $\Pi_N$. Thus, there exists $z\in 
L^\infty((0,T);\calZ)\cap H^1((0,T);\calV)$ and a (not relabeled) subsequence 
such that
\begin{align}
 \wt z_N,\overline z_N,\underline z_N&\overset{*}{\rightharpoonup} z \text{ 
weakly$*$ in } L^\infty((0,T);\calZ),
\label{eq.convvisc01}\\
\wt z_N&\rightharpoonup z \text{ weakly in }H^1((0,T);\calV),
\label{eq.convvisc02}\\
\wt z_N(t),\overline z_N(t),\underline{z}_N(t)&\rightharpoonup z(t) \text{ 
weakly in  $\calZ$ for all $t\in [0,T]$}, 
\label{eq.convvisc03}
\end{align}
where the last line is a consequence of \eqref{eq.convvisc01} and 
\eqref{eq.convvisc02}. Thanks to \eqref{est.prvisc01} the limits of the 
different interpolants coincide.  
All accumulation points obtained in this way are uniformly bounded 
in $L^\infty((0,T);\calZ)$ with respect to  
$\varepsilon>0$ and the chosen sequence of  partitions.  
With the above definitions, for $t>0$ the inclusion  
\eqref{eq.discrincl01} can be rewritten as 
$
 -\rmD\calE(\bar t_N(t),\overline z_N(t))\in 
\partial\calR_\varepsilon(\dot{\wt z}_N(t)), 
$ 
and by convex analysis and the chain rule we obtain 
\begin{multline*}
 \calR_\varepsilon(\dot{\wt z}_N(t)) + \calR_\varepsilon^*(-\rmD\calE(\bar 
t_N(t),\overline z_N(t)))
\\= -\tfrac{\rmd}{\rmd t} \calI(\wt z(t)) 
+ \langle \ell(\bar t_N(t)),\dot{\wt z}_N(t)\rangle
+ \langle \rmD\calI(\wt z(t)) - \rmD\calI(\overline z(t)),\dot{\wt 
z}_N(t)\rangle.
\end{multline*}
Integration with respect to $t$ results in a  discrete version of the energy 
dissipation estimate \eqref{eq.EDP02} with an additional error term: For all 
$t\in [0,T]$
\begin{multline}
\label{est.discrviscEDI01}
 \calI(\wt z_N(t)) +\int_0^t \calR_\varepsilon(\dot{\wt z}_N(s)) + 
\calR_\varepsilon^*(-\rmD\calE(\bar t_N(s),\overline z_N(s)))\ds
\\
\leq 
\calI(z_0) + \int_0^t \langle \ell(\bar t_N(s)),\dot{\wt z}_N(s)\rangle\ds 
+\int_0^t r_N(s)\ds, 
\end{multline}
where  $r_N(t)= \langle \rmD\calI(\wt z_N(t)) - \rmD\calI(\overline z_N(t)),
\dot{\wt z}_N(t)\rangle$. Next we pass to the limit $N\to \infty$ in 
\eqref{est.discrviscEDI01}. 
Since $\wt z_N(t) - \overline z_N(t) =\dot{\wt z}_N(t)(t-\bar t_N(t))$, 
with \eqref{est.lambda-convex} we find
\begin{align*}
 r_N(t)&=-(\bar t_N(t)-t)^{-1} 
 \langle \rmD\calI(\wt z_N(t)) -\rmD \calI(\overline z_N(t)),\wt z_N(t) - 
\overline 
z_N(t)\rangle \leq \lambda \tau_k\bnorm{\dot{\wt z}_N(t)}_\calV^2, 
\end{align*}
and $\lambda>0$ is independent of $\varepsilon>0$ and the  partition $\Pi_N$. 
Hence, relying on estimate \eqref{est.prvisc01} we obtain  
\begin{align*}
 \limsup_{N\to \infty}\int_0^t r_N(s)\ds \leq \lambda \limsup_{N\to\infty}  
\triangle_N \norm{\wt z_N}^2_{H^1((0,T);\calV)}=0,
\end{align*}
as $ \limsup_{N\to \infty}\triangle_N =0$. 
Concerning the power term observe first that $\bar t_N(t)\searrow t$ for $N\to 
\infty$, and hence, $\ell(\bar t_N(t))\to \ell(t+)=\ell_+(t)$ strongly in 
$\calV^*$ (for 
all $t\in [0,T]$). Since $\ell\in L^\infty((0,T);\calV^*)$ this implies in 
particular that $\ell\circ \bar t_N\to \ell_+$ strongly in 
$L^2((0,T);\calV^*)$. Taking into account  the weak convergence of 
$(\dot{\wt z}_N)_N$  in $L^2((0,T);\calV)$ we obtain
\begin{align*}
 \int_0^t\langle \ell\circ \bar t_N,\dot{\wt z}_N\rangle\ds
 \to 
\int_0^t\langle \ell_+,\dot z\rangle\ds. 
\end{align*}
The discrete energy dissipation estimate \eqref{est.discrviscEDI01} in 
particular implies that 
\[
\sup_N\int_0^T\calR_\varepsilon^*(-\rmD\calE(\bar 
t_N,\overline z_N))\ds <\infty
\]
and hence $\rmD\calE(\bar t_N,\overline z_N)$ is 
uniformly bounded (with respect to $N$) in $L^2((0,T);\calV^*)$. Thanks to 
\eqref{eq.convvisc03} we also have pointwise weak convergence in $\calZ^*$ of 
$\rmD\calE(\bar t_N(t),\overline z_N(t))$ to $\rmD\calE(t+,z(t))$ so that 
altogether $\rmD\calE(\bar t_N,\overline z_N)\rightharpoonup  \rmD\calE(\cdot 
+ 
, z(\cdot))$ weakly in $L^2((0,T);\calV^*)$. By lower semicontinuity we 
therefore obtain for the left hand side in \eqref{est.discrviscEDI01}
\begin{align*}
 \liminf_{N}\text{(l.h.s)}\geq \calI( z(t)) +\int_0^t 
\calR_\varepsilon(\dot{z}(s)) + 
\calR_\varepsilon^*(-\rmD\calE(s+, z(s)))\ds. 
\end{align*}
In summary we have shown that $z$ satisfies \eqref{eq.EDP02} with $\ell_+$ and 
therefore also with $\ell$. Hence, by Lemma \ref{lem:EDP} $z$ is a weak 
solution to 
\eqref{eq.viscincl01} for $\ell$. 

Improved estimates: 
Assume in addition that $\rmD\calE(0,z_0)\in\calV^*$. Then 
from 
Proposition \ref{prop.BV-estimate} we obtain 
\begin{align*}
 \norm{\wt z_N}_{W^{1,1}((0,T);\calZ)} 
 + \varepsilon\norm{\dot{\wt z}_N}_{L^\infty(0,T;\calV)}
 + \norm{\rmD\calE(\bar t_N,\overline z_N)}_{L^\infty((0,T);\calV^*)}\leq C, 
\end{align*}
and $C>0$ is independent of $\varepsilon$ and $\Pi_N$. 
Moreover, $\norm{\wt z_N}_{H^1((0,T),\calZ)}\leq C_\varepsilon$, uniformly in 
$N$.  
Hence, by weak compactness and  lower semicontinuity, 
for $N\to\infty$ we obtain the improved regularity of $z$ as well as  
 \eqref{est.bv03}. 
\end{proof}
\section{The viscosity limit} 
\label{s-epsto0}
In order to study the limit $\varepsilon\to 0$ we use the reparameterization 
technique originally introduced in \cite{MiEf06} and refined in \cite{MRS16}, 
among others. 
In this section we assume 
\begin{align}
\label{eq.assumptionvv1}
 \text{\eqref{eq.Mief000}--\eqref{eq.Mief100} and that 
$\rmD\calE(0,z_0)\in\calV^*$.
}
\end{align}

Let 
\[ 
     \vvp:\calV\times\calV^*\to\R,\quad \vvp(v,w):=\calR(v) 
+\norm{v}_\bbV\dist_\bbV(w,\partial\calR(0))
\]
denote the so called 
\textit{vanishing viscosity contact potential}, \cite{MRS16}. Observe that by 
Young's inequality, for all $\varepsilon>0$ we have 
$\vvp(v,w)\leq \calR_\varepsilon(v) + \calR_\varepsilon^*(w)$. Let 
$\varepsilon>0$ and let $z_\varepsilon$ be a weak solution of the viscous 
problem \eqref{eq.viscincl01}. As in \cite{MRS16}, 
we define 
\begin{align}
 s_\varepsilon(t):= t +\int_0^t \vvp(\dot 
z_\varepsilon(r),-\rmD\calE(r,z_\varepsilon(r))\dr,\quad 
S_\varepsilon:=s_\varepsilon(T)\,.
\label{s_eps}
\end{align}
By definition, $s_\varepsilon:[0,T]\to[0,S_\varepsilon]$ is strictly monotone 
and hence invertible. We denote with $\hat 
t_\varepsilon:[0,S_\varepsilon]\to[0,T]$ the inverse of $s_\varepsilon$.  
Furthermore, let
\begin{align}
 \hat z_\varepsilon(s):=z_\varepsilon(\hat t_\varepsilon(s)),\quad 
 \hat \ell_\varepsilon(s):=\ell(\hat t_\varepsilon(s))\,.
\end{align}
Clearly, $\hat t_\varepsilon\in W^{1,\infty}((0,S_\varepsilon))$ and for almost 
all $s$ we have 
\begin{align}
\label{eq.normalizationeps}
\hat t_\varepsilon'(s) + \vvp(\hat 
z_\varepsilon'(s),-\rmD\calE(\hat t_\varepsilon(s),\hat z_\varepsilon(s)))=1.
\end{align}
In the next proposition we collect regularity properties and (uniform) 
estimates that are valid for the transformed quantities.
\begin{proposition}
\label{prop.unifboundeps-param}
 Assume \eqref{eq.assumptionvv1}. 
 Then
 $\sup_{\varepsilon>0} S_\varepsilon<\infty$, 
 $\hat z_\varepsilon$ belongs to the space $ H^1((0,S_\varepsilon);\calZ)\cap 
W^{1,\infty}((0,S_\varepsilon);\calV)$ and there is a constant $C>0$ such that 
for all $\varepsilon>0$ and with $I_\varepsilon:=(0,S_\varepsilon)$ we have 
\begin{align}
\label{est.ztrafo1}
 \norm{\hat 
z_\varepsilon}_{W^{1,1}(I_\varepsilon;\calZ)} + 
\norm{\hat z_\varepsilon'}_{L^\infty(I_\varepsilon;\calX)} 
 +\varepsilon\norm{(\hat t_\varepsilon')^{-1}\hat 
z_\varepsilon'}_{L^\infty(I_\varepsilon;\calV)} 
+\norm{\rmD\calE(\hat t_\varepsilon,\hat 
z_\varepsilon)}_{L^\infty(I_\varepsilon;\calV^*)} <C.
\end{align}
Moreover, $\hat\ell_\varepsilon\in BV([0,S_\varepsilon];\calV^*)$ with
$\Var_{\calV^*}(\hat \ell_\varepsilon,[0,S_\varepsilon]) 
= \Var_{\calV^*}(\ell,[0,T])$.
\end{proposition}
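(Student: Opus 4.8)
The plan is to derive all assertions by pushing the bounds of Proposition~\ref{prop.unifboundeps} (chiefly \eqref{est.bv03} and the improved regularity $z_\varepsilon\in H^1((0,T);\calZ)$) through the change of variables $t=\hat t_\varepsilon(s)$, after first observing that $s_\varepsilon$ is a bona fide bi-Lipschitz change of variables. Indeed $s_\varepsilon'(t)=1+\vvp(\dot z_\varepsilon(t),-\rmD\calE(t,z_\varepsilon(t)))\ge1$ a.e.\ (since $\vvp\ge0$), while the bounds $\calR(v)\le C\norm{v}_\calV$, $\dist_\bbV(w,\partial\calR(0))\le\norm{w}_{\calV^*}+\sup_{\mu\in\partial\calR(0)}\norm{\mu}_{\calV^*}$ combined with the ($\varepsilon$-dependent) bounds $\norm{\dot z_\varepsilon}_{L^\infty((0,T);\calV)}\le C_\varepsilon$ and $\norm{\rmD\calE(\cdot,z_\varepsilon)}_{L^\infty((0,T);\calV^*)}\le C$ from \eqref{est.bv03} give $s_\varepsilon'\in L^\infty((0,T))$. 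Hence $s_\varepsilon\colon[0,T]\to[0,S_\varepsilon]$ is a strictly increasing bi-Lipschitz homeomorphism, the inverse $\hat t_\varepsilon$ is Lipschitz with $0<\hat t_\varepsilon'\le1$ a.e.\ and $\hat t_\varepsilon'(s)=1/s_\varepsilon'(\hat t_\varepsilon(s))$, both maps send null sets to null sets, and the chain rule for the composition of the $\calZ$-valued absolutely continuous map $z_\varepsilon$ with the monotone Lipschitz reparameterization $\hat t_\varepsilon$ applies (cf.\ the appendix on absolutely continuous functions and the chain rule), giving $\hat z_\varepsilon'(s)=\dot z_\varepsilon(\hat t_\varepsilon(s))\,\hat t_\varepsilon'(s)$ for a.e.\ $s$. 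The one-homogeneity of $\vvp$ in its first argument then turns this into \eqref{eq.normalizationeps}, and the symbol $(\hat t_\varepsilon')^{-1}\hat z_\varepsilon'$ is simply the a.e.-defined function $s\mapsto\dot z_\varepsilon(\hat t_\varepsilon(s))$.

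To see $\sup_{\varepsilon>0}S_\varepsilon<\infty$ I would combine the Young-type inequality $\vvp(v,w)\le\calR_\varepsilon(v)+\calR_\varepsilon^*(w)$ with the De Giorgi identity \eqref{eq.EDP01} for $z_\varepsilon$:
\begin{equation*}
 S_\varepsilon = T+\int_0^T\vvp\bigl(\dot z_\varepsilon,-\rmD\calE(\cdot,z_\varepsilon)\bigr)\dr
 \;\le\; T+\calI(z_0)-\calI(z_\varepsilon(T))+\int_0^T\langle\ell,\dot z_\varepsilon\rangle\dr .
\end{equation*}
Since $A$ is $\calZ$-elliptic and $\calF\ge0$ we have $\calI\ge0$, so $-\calI(z_\varepsilon(T))\le0$, while $\bigl|\int_0^T\langle\ell,\dot z_\varepsilon\rangle\dr\bigr|\le c_\calZ\norm{\ell}_{L^\infty((0,T);\calV^*)}\norm{\dot z_\varepsilon}_{L^1((0,T);\calZ)}$ is bounded uniformly in $\varepsilon$ by \eqref{est.bv03}; hence $S_\varepsilon\le C$.

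For the uniform estimate \eqref{est.ztrafo1} I would use the change of variables $s=s_\varepsilon(t)$, $\d s=s_\varepsilon'(t)\,\d t$, together with $s_\varepsilon'\ge1$ and $\hat z_\varepsilon'(s_\varepsilon(t))=\dot z_\varepsilon(t)/s_\varepsilon'(t)$. This gives $\int_{I_\varepsilon}\norm{\hat z_\varepsilon}_\calZ\,\d s=\int_0^T\norm{z_\varepsilon}_\calZ\,s_\varepsilon'\,\d t\le\norm{z_\varepsilon}_{L^\infty((0,T);\calZ)}S_\varepsilon$ and $\int_{I_\varepsilon}\norm{\hat z_\varepsilon'}_\calZ\,\d s=\int_0^T\norm{\dot z_\varepsilon}_\calZ\,\d t$, whence the $W^{1,1}(I_\varepsilon;\calZ)$ bound; since $\vvp(v,-\rmD\calE)\ge\calR(v)\ge c\norm{v}_\calX$ and $\hat t_\varepsilon'=(1+\vvp(\dot z_\varepsilon\circ\hat t_\varepsilon,\cdot))^{-1}$, one has $\norm{\hat z_\varepsilon'(s)}_\calX=\hat t_\varepsilon'(s)\,\norm{\dot z_\varepsilon(\hat t_\varepsilon(s))}_\calX\le\tfrac{\norm{\dot z_\varepsilon(\hat t_\varepsilon(s))}_\calX}{1+c\,\norm{\dot z_\varepsilon(\hat t_\varepsilon(s))}_\calX}\le\tfrac1c$; and, because the bi-Lipschitz $s_\varepsilon$ preserves essential suprema under composition, $\varepsilon\norm{(\hat t_\varepsilon')^{-1}\hat z_\varepsilon'}_{L^\infty(I_\varepsilon;\calV)}=\varepsilon\norm{\dot z_\varepsilon}_{L^\infty((0,T);\calV)}$ and $\norm{\rmD\calE(\hat t_\varepsilon,\hat z_\varepsilon)}_{L^\infty(I_\varepsilon;\calV^*)}=\norm{\rmD\calE(\cdot,z_\varepsilon)}_{L^\infty((0,T);\calV^*)}$, both bounded by \eqref{est.bv03}. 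The (non-uniform) memberships $\hat z_\varepsilon\in H^1(I_\varepsilon;\calZ)\cap W^{1,\infty}(I_\varepsilon;\calV)$ follow from $\int_{I_\varepsilon}\norm{\hat z_\varepsilon'}_\calZ^2\,\d s=\int_0^T\norm{\dot z_\varepsilon}_\calZ^2\,(s_\varepsilon')^{-1}\,\d t\le\norm{\dot z_\varepsilon}_{L^2((0,T);\calZ)}^2$ and $\norm{\hat z_\varepsilon'(s)}_\calV\le\norm{\dot z_\varepsilon}_{L^\infty((0,T);\calV)}$.

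Finally, $\hat t_\varepsilon$ is a continuous nondecreasing surjection of $[0,S_\varepsilon]$ onto $[0,T]$, and reparameterization by such a map leaves the pointwise total variation unchanged: ``$\le$'' holds because a partition of $[0,S_\varepsilon]$ is carried forward to an ordered family in $[0,T]$ (repeated values contributing $0$), and ``$\ge$'' follows by lifting an arbitrary partition of $[0,T]$ through $\hat t_\varepsilon$ using surjectivity; thus $\Var_{\calV^*}(\hat\ell_\varepsilon,[0,S_\varepsilon])=\Var_{\calV^*}(\ell,[0,T])<\infty$, so $\hat\ell_\varepsilon\in BV([0,S_\varepsilon];\calV^*)$. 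The step that needs the most care is making the reparameterization calculus of the first paragraph rigorous — the chain rule and change-of-variables identities for the $\calZ$-valued composition $z_\varepsilon\circ\hat t_\varepsilon$ — which rests on $s_\varepsilon$ being bi-Lipschitz, i.e.\ on the ($\varepsilon$-dependent) bound $\dot z_\varepsilon\in L^\infty((0,T);\calV)$; the only genuinely non-formal point is the $\varepsilon$-uniform control of $\norm{\hat z_\varepsilon'}_{L^\infty(I_\varepsilon;\calX)}$, which comes precisely from pairing $\calR(v)\ge c\norm{v}_\calX$ with the identity for $\hat t_\varepsilon'$.
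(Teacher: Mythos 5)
Your proof is correct and follows essentially the same route as the paper: bound $S_\varepsilon$ via $\vvp\le\calR_\varepsilon+\calR_\varepsilon^*$, the De Giorgi identity \eqref{eq.EDP01}, and \eqref{est.bv03}; then push the $\varepsilon$-uniform bounds of Proposition~\ref{prop.unifboundeps} through the reparameterization $t=\hat t_\varepsilon(s)$, using $s_\varepsilon'\ge1$ and the normalization to get $\norm{\hat z_\varepsilon'}_{L^\infty(I_\varepsilon;\calX)}\le 1/c$; and invoke the strict monotonicity of $\hat t_\varepsilon$ for the invariance of the $\calV^*$-total variation. The paper's proof is just a terse version of exactly these steps, so you have merely filled in the details the authors leave implicit.
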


\begin{proof}
 Observe that $S_\varepsilon\leq T + \int_0^T \calR_\varepsilon(\dot 
z_\varepsilon(r)) +\calR_\varepsilon^*(-\rmD\calE(r,z_\varepsilon(r)))\dr$. 
From 
the identity \eqref{eq.EDP01} and estimate \eqref{est.bv03} we deduce the 
uniform bound for $(S_\varepsilon)_\varepsilon$. Since $\hat t_\varepsilon$ is 
Lipschitz continuous, the regularity of $\hat z_\varepsilon$ and  estimate 
\eqref{est.ztrafo1}  immediately follow from  Proposition 
\ref{prop.unifboundeps}. 
 Observe finally that thanks to the strict monotonicity of $s_\varepsilon$ we 
have  $\Var_{\calV^*}(\hat \ell_\varepsilon,[a,b])= \Var_{\calV^*}(\ell,[\hat 
t_\varepsilon(a),\hat t_\varepsilon(b)])$. 
\end{proof}
As a consequence, by compactness we obtain 

\begin{proposition}
\label{prop.param-conv}
Assume \eqref{eq.assumptionvv1}. 
 
 Let $(\varepsilon_n)_{n\in \N}$ be a sequence with $\varepsilon_n\searrow 0$ 
for $n\to \infty$. Then there exist $S>0$,  a triple $(\hat t,\hat 
z,\hat\ell)$ with $\hat t\in W^{1,\infty}(0,S;\R)$, $\hat z\in  
AC^\infty([0,S];\calX)
\cap C([0,S];\calV)\cap
BV([0,S];\calZ)\cap  C_\text{weak}([0,S];\calZ)$ and 
$\hat \ell\in  BV([0,S];\calV^*)$ 
and a subsequence of $(\varepsilon_n)_n$ such that for $n\to \infty$ (we 
suppress the index $n$)
\begin{align}
 S_\varepsilon&\to S;\quad \hat t_\varepsilon\overset{*}{\rightharpoonup}\hat t 
\text{ weakly$*$ in }W^{1,\infty}(0,S),\,\, \hat t(S)=T,\\
\hat z_\varepsilon& \rightharpoonup\hat z \text{ weakly$*$ in 
}L^\infty(0,S;\calZ) \text{ and uniformly in }C([0,S];\calV), 
\\
\hat\ell_\varepsilon&\overset{*}{\rightharpoonup}\hat\ell,\,\, 
\rmD\calI(\hat z_\varepsilon)\overset{*}{\rightharpoonup}\rmD\calI(\hat z) 
\text{ weakly$*$ in }L^\infty(0,S;\calV^*),  
\label{est.convparam3}
\end{align}
and for every $s\in [0,S]$
\begin{align}
 \hat t_\varepsilon(s)&\to\hat t(s), 
 \quad 
 \hat z_\varepsilon(s)\rightharpoonup\hat z(s) \text{ weakly in $\calZ$ }, 
 \label{est.convparam4}\\
 \rmD\calI(\hat z_\varepsilon(s))&\rightharpoonup\rmD\calI(\hat z(s)) \text{ 
weakly in $\calV^*$},
\quad 
\hat\ell_\varepsilon(s)\rightarrow\hat\ell(s) \text{ strongly in }\calV^*. 
\label{est.convparam5}
\end{align} 
The function $s\mapsto\calI(\hat z(s))$ is uniformly continuous on $[0,S]$, the 
function  $s\mapsto \rmD\calI(\hat z(s))$ belongs to 
$C_\text{weak}([0,S];\calV^*)$ and   
  $\hat t'(s)\geq 0$ for almost all $s$. Moreover,  $\hat \ell$ 
can be characterized as follows: 
For every $t_*\in [0,T]$ there exists $s_*\in \hat t^{-1}(t_*)$ such that for 
all $s\in [0,S]$ with $\hat t(s)=t_*$ we have
\begin{align}
\label{eq.charhatell}
 \hat \ell(s)=\begin{cases}
               \ell(\hat t(s)-)&s<s_*\\
               \ell(\hat t(s)+) & s>s_*
              \end{cases}\, \quad\text{and }\quad\hat \ell(s_*)\in 
\{\ell(t_*),\ell(t_*+),\ell(t_*-)\}.
\end{align}
\end{proposition}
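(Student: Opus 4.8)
The plan is to extract convergent subsequences using the uniform bounds from Proposition \ref{prop.unifboundeps-param}, then identify the limit objects and establish the characterization of $\hat\ell$.

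\textbf{Step 1: Compactness for $(\hat t_\varepsilon, \hat z_\varepsilon, \hat\ell_\varepsilon)$.}
First I would use $\sup_\varepsilon S_\varepsilon<\infty$ to pass (along a subsequence) to a limit $S>0$. In order to compare functions defined on the varying intervals $[0,S_\varepsilon]$, I would extend each $\hat t_\varepsilon$, $\hat z_\varepsilon$, $\hat\ell_\varepsilon$ constantly beyond $S_\varepsilon$ (by their values at $S_\varepsilon$), so that all functions live on a common interval $[0,\bar S]$ with $\bar S\geq\sup_\varepsilon S_\varepsilon$; the estimates in \eqref{est.ztrafo1} survive this extension. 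From $\hat t_\varepsilon\in W^{1,\infty}$ with $0\leq\hat t_\varepsilon'\leq 1$ (by \eqref{eq.normalizationeps}, since $\vvp\geq 0$) and $\hat t_\varepsilon(S_\varepsilon)=T$, Arzel\`a--Ascoli yields uniform convergence $\hat t_\varepsilon\to\hat t$ and weak$*$ convergence $\hat t_\varepsilon'\overset{*}{\rightharpoonup}\hat t'$ in $L^\infty$, with $\hat t'\geq 0$ a.e.\ and $\hat t(S)=T$. For $\hat z_\varepsilon$, the bound $\norm{\hat z_\varepsilon}_{W^{1,1}(I_\varepsilon;\calZ)}\leq C$ together with $\norm{\hat z_\varepsilon'}_{L^\infty(I_\varepsilon;\calX)}\leq C$ and $\calZ\Subset\calV$ gives, via a generalized Arzel\`a--Ascoli / Aubin--Lions type argument, uniform convergence in $C([0,S];\calV)$ and (using $\norm{\hat z_\varepsilon}_{L^\infty(I_\varepsilon;\calZ)}\leq C$) weak$*$ convergence in $L^\infty(0,S;\calZ)$ to a limit $\hat z$ which therefore lies in $L^\infty(0,S;\calZ)\cap C([0,S];\calV)$; a Helly-type selection principle on the total variations $\Var_\calZ(\hat z_\varepsilon)\leq C$ refines this to pointwise weak-$\calZ$ convergence $\hat z_\varepsilon(s)\rightharpoonup\hat z(s)$ for every $s$, so $\hat z\in BV([0,S];\calZ)\cap C_\text{weak}([0,S];\calZ)$; the $L^\infty(\calX)$ bound on the derivative passes to $\hat z'$ and gives $\hat z\in AC^\infty([0,S];\calX)$. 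Finally $\hat\ell_\varepsilon\in BV([0,S];\calV^*)$ with uniformly bounded variation $\Var_{\calV^*}(\hat\ell_\varepsilon,[0,S_\varepsilon])=\Var_{\calV^*}(\ell,[0,T])$ allows another Helly selection: $\hat\ell_\varepsilon(s)\to\hat\ell(s)$ \emph{strongly} in $\calV^*$ for every $s$ (strong convergence because $\hat\ell_\varepsilon(s)$ is always one of the values $\ell(t)$, $\ell(t+)$, $\ell(t-)$ for $t$ near $\hat t_\varepsilon(s)$, and $\ell\in BV$), hence $\hat\ell\in BV([0,S];\calV^*)$.

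\textbf{Step 2: Convergence of $\rmD\calI(\hat z_\varepsilon)$ and continuity of $s\mapsto\calI(\hat z(s))$.}
From \eqref{est.ztrafo1}, $\rmD\calE(\hat t_\varepsilon,\hat z_\varepsilon)=\rmD\calI(\hat z_\varepsilon)-\hat\ell_\varepsilon$ is bounded in $L^\infty(I_\varepsilon;\calV^*)$; since $\hat\ell_\varepsilon$ is bounded in $L^\infty(\calV^*)$, so is $\rmD\calI(\hat z_\varepsilon)=A\hat z_\varepsilon+\rmD\calF(\hat z_\varepsilon)$. Using the weak-$\calZ$ pointwise convergence of $\hat z_\varepsilon(s)$ together with the weak-weak continuity of $\rmD\calF:\calZ\to\calZ^*$ in \eqref{ass.fweakconv} and continuity of $A$, I get $\rmD\calI(\hat z_\varepsilon(s))\rightharpoonup\rmD\calI(\hat z(s))$ weakly in $\calZ^*$ pointwise; combined with the uniform $\calV^*$-bound this upgrades to weak convergence in $\calV^*$ pointwise and weak$*$ in $L^\infty(0,S;\calV^*)$, giving $\rmD\calI(\hat z)\in C_\text{weak}([0,S];\calV^*)$. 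For the uniform continuity of $s\mapsto\calI(\hat z(s))$ I would use the chain rule from the Appendix (Proposition \ref{prop.chainrules}): since $\hat z\in W^{1,1}(0,S;\calZ)$-ish (actually $BV$) with $\hat z'\in L^\infty(\calX)$ and $\rmD\calI(\hat z)\in L^\infty(\calV^*)$, the composition $s\mapsto\calI(\hat z(s))$ is absolutely continuous with $\frac{\rmd}{\rmd s}\calI(\hat z(s))=\langle\rmD\calI(\hat z(s)),\hat z'(s)\rangle$ in $L^\infty$, hence Lipschitz, hence uniformly continuous.

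\textbf{Step 3: Characterization of $\hat\ell$.}
This is the delicate part and the main obstacle. Fix $t_*\in[0,T]$. The set $\hat t^{-1}(t_*)$ is a (possibly degenerate) closed subinterval $[\sigma_-,\sigma_+]\subseteq[0,S]$ since $\hat t$ is continuous and nondecreasing. For $s<\sigma_-$ we have $\hat t(s)<t_*$ so $\hat t_\varepsilon(s)<t_*$ eventually, and as $\varepsilon\to 0$ the reparametrization must ``resolve'' the jump of $\ell$ at $t_*$: since $\hat t_\varepsilon(s)\to\hat t(s)=t_*$ from below is impossible for $s<\sigma_-$ (there $\hat t(s)<t_*$ strictly), actually the relevant regime is $s$ near the endpoints. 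I would argue: for $s$ with $\hat t(s)=t_*$ and $s<\sigma_+$, pick $s'\in(s,\sigma_+)$, also with $\hat t(s')=t_*$; monotonicity of $\hat t_\varepsilon$ and $\hat t_\varepsilon(s)\to t_*$ force $\hat t_\varepsilon(r)\to t_*$ for all $r\in[s,s']$, and one shows $\hat t_\varepsilon(s)\leq t_*\leq\hat t_\varepsilon(s')$ cannot both fail; a careful monotonicity argument (the ``jump transit'' lemma, analogous to \cite[Prop.~?]{MRS16}) shows that $\hat\ell_\varepsilon(s)=\ell(\hat t_\varepsilon(s))$ converges to $\ell(t_*-)$ if $s$ lies strictly left of the transit point and to $\ell(t_*+)$ if strictly right, where the transit point $s_*\in[\sigma_-,\sigma_+]$ is characterized as the infimum of $s$ such that $\hat t_\varepsilon(s)>t_*$ eventually (or a limit of such). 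The value $\hat\ell(s_*)$ at the transit point is a limit of $\ell$ evaluated at points converging to $t_*$ possibly from either side (or equal to $t_*$), hence lies in $\{\ell(t_*),\ell(t_*+),\ell(t_*-)\}$. The existence of a single such $s_*$ working for all $s$ with $\hat t(s)=t_*$ follows because $\hat\ell$, being a pointwise limit of the monotone-in-a-suitable-sense family $\hat\ell_\varepsilon$, is itself ``monotone across the plateau'': it equals $\ell(t_*-)$ on an initial segment and $\ell(t_*+)$ on a terminal segment of $[\sigma_-,\sigma_+]$, with at most one intermediate value. I expect the bookkeeping of left/right limits and the selection of the transit point $s_*$ — making precise that $\hat\ell$ takes exactly the three indicated values with the stated structure — to be the technical heart, mirroring the treatment in \cite{KrejciLiero09,MRS16}; the rest is standard compactness.
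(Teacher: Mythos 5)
Your Step 1 compactness argument and your Step 3 sketch of the transit-point argument for the characterization of $\hat\ell$ are essentially the paper's approach (the paper packages the Arzel\`a--Ascoli/Helly compactness in Proposition~\ref{prop.hellyarzasc-version2} and uses the Banach-space Helly theorem from \cite{BarbuPrecupanu86}, but the substance is the same, and the paper makes the $s_\pm$ / strict-monotonicity bookkeeping in Step 3 precise exactly as you anticipate).

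There is, however, a genuine gap in Step 2: you cannot invoke the chain rule (Proposition~\ref{prop.chainrules}) to get uniform continuity of $s\mapsto\calI(\hat z(s))$. That chain rule requires $z\in W^{1,1}((0,T);\calV)$, and the limit $\hat z$ has no such Sobolev regularity in $\calV$: the uniform bound from \eqref{est.ztrafo1} is a $W^{1,1}$-bound in $\calZ$, which by non-reflexivity of $W^{1,1}$ only yields $\hat z\in BV([0,S];\calZ)$; the derivative is controlled pointwise only in $\calX$ (via $AC^\infty([0,S];\calX)$). Even if one tried to write $\tfrac{\rmd}{\rmd s}\calI(\hat z(s))=\langle\rmD\calI(\hat z(s)),\hat z'(s)\rangle$ formally, the pairing is ill-defined: $\rmD\calI(\hat z)\in\calV^*$ but the embeddings $\calZ\Subset\calV\subset\calX$ give $\calX^*\subset\calV^*$, so $\calV^*$ elements do not act on $\calX$-valued derivatives. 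The paper avoids all this by arguing directly from the structure $\calI(z)=\tfrac12\langle Az,z\rangle+\calF(z)$: it shows $A\hat z(\cdot)$ and $\rmD\calF(\hat z(\cdot))$ both belong to $C_{\text{weak}}([0,S];\calV^*)$, and then pairs the weakly-$\calV^*$-continuous factor $A\hat z(s)$ against the strongly-$\calV$-continuous factor $\hat z(s)$ (available because $\hat z\in C([0,S];\calV)$) for the quadratic part, while the weak--weak continuity assumption \eqref{ass.fweakconv} handles $\calF(\hat z(\cdot))$ directly. You should replace your chain-rule argument by this decomposition; note the result is only continuity (hence uniform continuity on the compact $[0,S]$), not Lipschitz continuity.

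A minor additional remark: you correctly note that the pointwise strong $\calV^*$-convergence of $\hat\ell_\varepsilon(s)$ is because the values lie in the small set $\{\ell(t),\ell(t\pm)\}$; the paper obtains weak convergence first from the Banach-valued Helly theorem and then upgrades to strong convergence by exactly this observation, so here the two routes coincide.
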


\begin{remark}
 In the previous proposition we tacitly extend all functions by their constant 
value in $S_\varepsilon$, if $S_\varepsilon<S$. 
\end{remark}

\begin{proof}[Proof of Proposition \ref{prop.param-conv}]
The uniform bounds provided in Proposition \ref{prop.unifboundeps-param} in 
combination with Proposition \ref{prop.hellyarzasc-version2} yield the 
convergence 
 properties of the sequence  $(\hat z_\varepsilon)_\varepsilon$ and the 
regularity of the limit function $\hat z$.  
 The first assertion in \eqref{est.convparam5} is a consequence of the weak 
continuity of $\rmD\calI:\calZ\to \calZ^*$,  \eqref{est.convparam4} and the 
uniform estimate \eqref{est.ztrafo1}. From this we also obtain the second part 
of  \eqref{est.convparam3}.  By the very same argument the weak continuity of 
$s\mapsto \rmD\calI(\hat z(s))$ in $\calV^*$ ensues.

Let us next show  that $s\mapsto\calI(\hat z(s))$ is  continuous 
and thus uniformly 
continuous on $[0,S]$. As stated above, we have 
$\rmD\calI(\hat z(\cdot))\in C_\text{weak}([0,S];\calV^*)$. But this is 
also separately valid for  the mappings $s\mapsto A\hat z(s)$ 
and $z\mapsto 
\rmD\calF(\hat z(s))$. Indeed, since $\hat z\in L^\infty(0,S;\calZ)$ the 
assumed bound in   \eqref{ass.F01} yields $\rmD\calF(\hat z(\cdot))\in 
L^\infty(0,S;\calV^*)$. Combining this with assumption \eqref{ass.fweakconv} 
and 
the fact that  $\hat z\in C_\text{weak}([0,S];\calZ)$, we  obtain 
$\rmD\calF(\hat z(\cdot))\in C_\text{weak}([0,S];\calV^*)$, and hence also 
$A\hat z(\cdot)\in C_\text{weak}([0,S];\calV^*)$. By standard arguments 
we ultimately obtain the continuity of $s\mapsto \calI(\hat z(s))$. 

It remains to discuss the sequence $(\hat\ell_\varepsilon)_\varepsilon$. The 
Banach space valued version of  
Helly's selection principle, \cite{BarbuPrecupanu86}, applied to  the 
sequence $(\hat 
\ell_\varepsilon)_\varepsilon$ yields \eqref{est.convparam3} and weak 
convergence in  \eqref{est.convparam5}. 
Since $\ell$ 
possesses (strong) left and right limits in $\calV^*$ and since $ (\ell(\hat 
t_\varepsilon(s)))_\varepsilon$ converges weakly for all $s$, it follows that 
$\hat\ell(s)$ belongs to the set $\{\ell(\hat t(s)),\ell(\hat 
t(s)+),\ell(\hat t(s)-)\}$ and that 
$\hat\ell_\varepsilon(s)\to \hat \ell(s)$ strongly in $\calV^*$. 
Let $t_*\in [0,T]$. If $t_*$ is a point of continuity of $\ell$, the proof of 
the representation formula for $\hat \ell$ is finished. Assume now that $t_*$ 
is a jump point of $\ell$ with $\ell(t_*-)\neq \ell(t_*+)$ (the arguments here 
below can easily be adapted to the case $\ell(t_*-)=\ell(t_*+)\neq\ell(t_*)$).  
By monotonicity and continuity of $\hat t$ we have  $\hat t^{-1}(t_*)=[a,b]$ 
for some $a<b$. Let $s\in 
[a,b]$ with $\hat\ell(s)=\ell(t_*+)$. This implies that there is 
$\varepsilon_0>0$ such that for all $\varepsilon<\varepsilon_0$ we have $\hat 
t_\varepsilon(s)\geq t_*$.  Again by monotonicity this implies that $\hat 
t_\varepsilon(\sigma)\geq t_*$ for every $\sigma\in [s,b]$  
and every $\varepsilon<\varepsilon_0$. Hence, for all these $\sigma$ we have 
$\hat\ell(\sigma)=\ell(t_*+)$. Let $s_+:=\inf\Set{s\in 
[a,b]}{\hat\ell(s)=\ell(t_*+)}$. Then $\hat \ell(s)=\ell(t_*+)$ for all $s\in 
(s_+,b]$. In a similar way we define 
$s_-:=\sup\Set{s\in [a,b]}{\hat\ell(s)=\ell(t_*-)}$ and obtain 
$\hat\ell(s)=\ell(t_*-)$ for all $s\in[a,s_-)$. Observe that $s_-\leq s_+$. 
Assume now that $s_-<s_+$ and let $s_1<s_2\in (s_-,s_+)$ which implies  
$\hat  \ell(s_1)=\hat \ell(s_2)=\ell(t_*)$. But this is only 
possible if   there exists 
$\varepsilon_1>0$ such that for all $\varepsilon<\varepsilon_1$ we have $\hat 
t_\varepsilon(s_1)=t_*=\hat t_\varepsilon(s_2)$, which  is a 
contradiction to 
the strict monotonicity of $\hat t_\varepsilon$. Hence, $s_-=s_+=:s_*$ and the 
proof is finished.  
\end{proof}

Next we rewrite the energy dissipation estimate \eqref{eq.EDP02} in the new 
variables and investigate the limit $\varepsilon\to 0$. For that purpose we 
need to introduce some more notation. For a curve $z:[0,S]\to\calX$ 
we 
define 
\begin{align*}
 \Var_\calR(z,[a,b]):=\sup_{\text{partitions $(t_i)_i$ of 
$[a,b]$}}\sum_{i=1}^m\calR(z(t_i)-z(t_{i-1}))\,
\end{align*}
as the $\calR$ dissipation ($\calR$ variation)  along the curve $z$. 
Thanks to the assumptions on $\calR$ we have $\Var_\calR(z;[a,b])<\infty$ if 
and only if $\Var_\calX(z;[a,b])<\infty$.

Let $\hat \calE(s,v):=\calI(v) - \langle\hat\ell(s),v\rangle$. In order to 
shorten the notation let 
\begin{align}
\label{def.abbr-e}
\vve(f,v):=\dist_{\bbV}(-\rmD\calI(v) +f,\partial\calR(0)).
\end{align}
With this, 
$\dist_\bbV(-\rmD\hat\calE(s,\hat 
z(s)),\partial\calR(0))=\vve(\hat\ell(s),\hat 
z(s))$. For $f\in BV([0,S];\calV^*)$ and $v\in \calZ$  let 
\begin{align}
\label{def.vvm}
 \vvm(f(s),v):=\min\{\vve(f(s),v), \vve(f(s-),v),\vve(f(s+),v)\}. 
\end{align}
The next lemma shows that $\vvm(\cdot,\cdot)$ is lower semicontinuous. 
\begin{lemma}
 \label{lem:vvmlsc}
 Let $f\in BV([0,S];\calV^*)$, $(v_n)_n\subset \calZ$ with 
$\rmD\calI(v_n)\rightharpoonup\rmD\calI(v)$ weakly in $\calV^*$ and 
$(s_n)_n,s\subset[0,S]$ with $s_n\to s$. Then $\liminf_n \vvm(f(s_n), v_n)\geq 
m(f(s),v)$. 
\end{lemma}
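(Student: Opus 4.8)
The plan is to reduce the claim to the lower semicontinuity of the map $w \mapsto \dist_\bbV(w, \partial\calR(0))$ on $\calV^*$ together with the weak-to-weak continuity of $\rmD\calF$ and the behaviour of one-sided limits of BV-functions. First I would recall that $\vve(f,v) = \dist_\bbV(-\rmD\calI(v) + f, \partial\calR(0)) = \dist_\bbV(-Av - \rmD\calF(v) + f, \partial\calR(0))$, and that the distance function $w \mapsto \dist_\bbV(w, \partial\calR(0))$ is Lipschitz (hence continuous) with respect to the $\norm{\cdot}_\bbV \sim \norm{\cdot}_\calV$ norm, and moreover \emph{weakly} lower semicontinuous on $\calV^*$ because $\partial\calR(0)$ is a bounded closed convex subset of $\calV^*$ so the distance is a supremum of weakly continuous affine functionals (via the support-function / dual representation). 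Combining: if $w_n \rightharpoonup w$ weakly in $\calV^*$ then $\liminf_n \dist_\bbV(w_n,\partial\calR(0)) \ge \dist_\bbV(w,\partial\calR(0))$.

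Next I would handle the argument $-\rmD\calI(v_n) + f(s_n)$. The term $-\rmD\calI(v_n) = -Av_n - \rmD\calF(v_n)$ converges weakly in $\calV^*$ to $-\rmD\calI(v)$ by hypothesis (this is exactly $\rmD\calI(v_n) \rightharpoonup \rmD\calI(v)$ weakly in $\calV^*$). The subtlety is the $f(s_n)$ part: since $f \in BV([0,S];\calV^*)$ and $s_n \to s$, the values $f(s_n)$ need not converge to $f(s)$, but any subsequential limit (in the weak sense, using that the jump set is countable and $f$ has strong one-sided limits) must lie in $\{f(s), f(s-), f(s+)\}$ — more precisely, along any subsequence $s_{n_k}$ one can extract a further subsequence along which $s_{n_k}$ approaches $s$ from the left, from the right, or equals $s$ eventually, so that $f(s_{n_k}) \to f(s-)$, $f(s+)$, or $f(s)$ respectively, strongly in $\calV^*$. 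Hence every subsequential weak limit of $-\rmD\calI(v_n) + f(s_n)$ is of the form $-\rmD\calI(v) + g$ with $g \in \{f(s), f(s-), f(s+)\}$.

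I would then assemble the argument: pass to a subsequence (not relabeled) realizing the liminf, i.e. $\lim_n \vvm(f(s_n),v_n) = \liminf_n \vvm(f(s_n),v_n)$. Since $\vvm(f(s_n),v_n) = \min\{\vve(f(s_n),v_n), \vve(f(s_n-),v_n), \vve(f(s_n+),v_n)\}$, along a further subsequence there is a fixed choice $\sigma \in \{0,-,+\}$ for which the minimum in $\vvm(f(s_n),v_n)$ is attained at the $\sigma$-term, so that $\vvm(f(s_n),v_n) = \vve(f(s_n^\sigma), v_n)$. Here I would also need that as $s_n \to s$, the one-sided values $f(s_n-), f(s_n+)$ themselves have subsequential strong limits in $\{f(s-), f(s+)\}$ (again using monotone approach and countability of the jump set), so that after one more subsequence extraction $f(s_n^\sigma) \to g$ strongly in $\calV^*$ with $g \in \{f(s-), f(s+)\}$ (or $g = f(s)$ if $\sigma = 0$ and $s_n = s$ eventually). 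Then $-\rmD\calI(v_n) + f(s_n^\sigma) \rightharpoonup -\rmD\calI(v) + g$ weakly in $\calV^*$, and by the weak lower semicontinuity of the distance function,
\[
\lim_n \vvm(f(s_n),v_n) = \lim_n \vve(f(s_n^\sigma),v_n) \ge \dist_\bbV(-\rmD\calI(v)+g, \partial\calR(0)) = \vve(g,v) \ge \vvm(f(s),v),
\]
the last inequality because $g \in \{f(s),f(s-),f(s+)\}$ so $\vve(g,v)$ is one of the three terms whose minimum is $\vvm(f(s),v)$. Since the limit along the chosen subsequence equals the original liminf, this gives $\liminf_n \vvm(f(s_n),v_n) \ge \vvm(f(s),v)$, as desired. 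The main obstacle — really the only non-routine point — is the careful bookkeeping of the one-sided limits: ensuring that after finitely many subsequence extractions one can simultaneously (i) fix which of the three terms realizes $\vvm$, and (ii) get strong $\calV^*$-convergence of the relevant $f$-values to a point in $\{f(s),f(s-),f(s+)\}$; this uses only that BV-functions into a Banach space have strong one-sided limits everywhere and that $s_n \to s$ forces a monotone-approach subsequence.
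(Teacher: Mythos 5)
Your proof is correct and takes the same approach as the paper: the paper's proof is the two-sentence observation that the accumulation points of $(f(s_n))_n$, $(f(s_n+))_n$, $(f(s_n-))_n$ lie in $\{f(s), f(s+), f(s-)\}$ and that one then concludes by lower semicontinuity of $\dist_\bbV(\cdot,\partial\calR(0))$. Your version supplies the details the paper leaves implicit — the convexity/boundedness of $\partial\calR(0)$ behind the weak lower semicontinuity, the monotone-approach subsequence extraction for the one-sided BV-limits, and the bookkeeping to fix which of the three terms realizes the minimum — all of which are the right ingredients.
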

\begin{proof}
 Observe that the accumulation points of the sequences 
$(f(s_n+))_n$, $(f(s_n))_n$, $(f(s_n-))_n$ belong to the set $\{f(s), f(s+), 
f(s-)\}$. Hence, by the lower semicontinuity of the functional $\dist_\bbV$ we 
conclude.
\end{proof}

\begin{theorem}
 \label{ex.ri.pparam}
 Assume \eqref{eq.assumptionvv1}. 
 Then there exist $S>0$, $\hat t\in W^{1,\infty}(0,S;\R)$, $\hat z\in 
AC^\infty([0,S];\calX)
\cap C([0,S];\calV)\cap
BV([0,S];\calZ)\cap  C_\text{weak}([0,S];\calZ)$ and 
 $\hat \ell\in  
BV([0,S];\calV^*)$ as in \eqref{eq.charhatell}  
 such that $\calI(\hat z)\in C([0,S])$, $\rmD\calI(\hat z)\in 
L^\infty(0,S;\calV^*)\cap 
C_\text{weak}([0,S];\calV^*)$. 
Let  
$G:=\Set{s\in [0,S]}{\vvm(\hat \ell(s),\hat z(s))>0 }$. The set $G$ is 
open and 
 $\hat z\in W^{1,\infty}_\text{loc}(G;\calV)$. Moreover,  
for almost every $s\in [0,S]$ 
\begin{align}
\hat t'(s)\geq 0,\,\, \hat t(S)=T,\,\, \hat z(0)=z_0,
\label{eq.pparamsol-con1}
\\
  \hat t'(s)\dist_\bbV(-\rmD\hat \calE(s,\hat z(s)),\partial\calR(0))=0\,,
\label{eq.complementarity}
\\
1= \begin{cases}
 \hat t'(s) +\calR[\hat z'](s) &\text{if }s\notin G\\
 \hat t'(s) +\calR[\hat z'](s) +\norm{\hat z'(s)}_\calV\dist_\bbV(-\rmD\hat 
\calE(s,\hat z(s)),\partial\calR(0)) &\text{if }s\in G
\end{cases}\,.
\label{eq.normalized.sol}
  \end{align}
Furthermore, $\hat t'=0$ almost everywhere on $G$. 
Finally, 
 for every $s\in [0,S]$ 
\begin{multline}
 \label{eq.energydissipidentitylimitpparam}
 \calI(\hat z(s)) + \int_0^s\calR[\hat z'](r) \dr + 
 \int_{(0,s)\cap G} \norm{\hat z'(r)}_\bbV\dist_\bbV(-\rmD\hat \calE(r,\hat 
z(s)),\partial\calR(0))\dr 
\\
= \calI(z_0) + \int_0^s\langle \hat\ell(r),\rmd \hat z(r)\rangle\,.
\end{multline}

Every  tuple $(S,\hat t,\hat z,\hat \ell)$ obtained as a limit as in 
Proposition \ref{prop.param-conv} satisfies the above conditions. 
\end{theorem}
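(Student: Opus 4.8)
The plan is to carry out the vanishing--viscosity passage in the parameterized formulation, taking Proposition~\ref{prop.param-conv} as input: it already furnishes the limit triple $(\hat t,\hat z,\hat\ell)$ with all the stated regularity together with $S_\varepsilon\to S$ (bounded, by Proposition~\ref{prop.unifboundeps-param}), $\hat t_\varepsilon\overset{*}{\rightharpoonup}\hat t$ in $W^{1,\infty}$, $\hat z_\varepsilon\rightharpoonup\hat z$ (uniformly in $C([0,S];\calV)$, weakly$*$ in $L^\infty(0,S;\calZ)$), $\rmD\calI(\hat z_\varepsilon)\overset{*}{\rightharpoonup}\rmD\calI(\hat z)$ in $L^\infty(0,S;\calV^*)$, and $\hat\ell_\varepsilon(s)\to\hat\ell(s)$ strongly in $\calV^*$ for every $s$; the estimate \eqref{est.ztrafo1} gives the uniform bounds on $\norm{\hat z_\varepsilon}_{W^{1,1}(I_\varepsilon;\calZ)}$, $\varepsilon\norm{(\hat t_\varepsilon')^{-1}\hat z_\varepsilon'}_{L^\infty(\calV)}$ and $\norm{\rmD\calE(\hat t_\varepsilon,\hat z_\varepsilon)}_{L^\infty(\calV^*)}$. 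The items $\hat t'\ge0$ and $\hat t(S)=T$ of \eqref{eq.pparamsol-con1} are part of Proposition~\ref{prop.param-conv}, and $\hat z(0)=z_0$ follows from $\hat z_\varepsilon(0)=z_0$ and the uniform $\calV$--convergence. Throughout, $\varepsilon\to0$ runs along the subsequence of Proposition~\ref{prop.param-conv}. The first step is to reparameterize the viscous energy balance: for fixed $\varepsilon>0$ I rewrite \eqref{eq.EDP01} for $z_\varepsilon$ in the variable $s$ via $t=\hat t_\varepsilon(s)$, $\dot z_\varepsilon(\hat t_\varepsilon(s))=\hat z_\varepsilon'(s)/\hat t_\varepsilon'(s)$; using the Young inequality $\vvp(v,w)\le\calR_\varepsilon(v)+\calR_\varepsilon^*(w)$, the one--homogeneity of $\calR$ and $\norm{\cdot}_\bbV$, and the normalization \eqref{eq.normalizationeps} (which, since $-\rmD\calE(\hat t_\varepsilon(s),\hat z_\varepsilon(s))=-\rmD\calI(\hat z_\varepsilon(s))+\hat\ell_\varepsilon(s)$, reads $\hat t_\varepsilon'(s)+\calR(\hat z_\varepsilon'(s))+\norm{\hat z_\varepsilon'(s)}_\bbV\,\vve(\hat\ell_\varepsilon(s),\hat z_\varepsilon(s))=1$), one obtains for every $s$
\begin{align*}
 \calI(\hat z_\varepsilon(s)) + \big(s-\hat t_\varepsilon(s)\big)\ \le\ \calI(z_0) + \int_0^s\langle\hat\ell_\varepsilon(r),\hat z_\varepsilon'(r)\rangle\,\dr ,
\end{align*}
while integrating \eqref{eq.normalizationeps} itself gives the \emph{exact} identity $s-\hat t_\varepsilon(s)=\int_0^s\big(\calR(\hat z_\varepsilon'(r))+\norm{\hat z_\varepsilon'(r)}_\bbV\,\vve(\hat\ell_\varepsilon(r),\hat z_\varepsilon(r))\big)\,\dr=:D_\varepsilon(s)$.

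Next I pass to the limit. The power term equals the Kurzweil integral $\int_0^s\langle\hat\ell_\varepsilon(r),\rmd\hat z_\varepsilon(r)\rangle$ (the integrator being absolutely continuous); using that $\hat z_\varepsilon\to\hat z$ uniformly in $C([0,S];\calV)$ with $\sup_\varepsilon\norm{\hat z_\varepsilon}_{W^{1,1}(I_\varepsilon;\calZ)}<\infty$, that $\hat\ell_\varepsilon(r)\to\hat\ell(r)$ in $\calV^*$ for every $r$ with $\Var_{\calV^*}(\hat\ell_\varepsilon,[0,S_\varepsilon])=\Var_{\calV^*}(\ell,[0,T])$, and that $\hat z$ is continuous (so state and load never jump at the same parameter value), the convergence theorem for the Kurzweil integral (Appendix~\ref{sec.Kurzweil}, cf.~\cite{KrejciLiero09}) gives $\int_0^s\langle\hat\ell_\varepsilon,\rmd\hat z_\varepsilon\rangle\to\int_0^s\langle\hat\ell,\rmd\hat z\rangle$. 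Since $s-\hat t_\varepsilon(s)\to s-\hat t(s)$ and $\liminf_\varepsilon\calI(\hat z_\varepsilon(s))\ge\calI(\hat z(s))$ (weak lower semicontinuity of the quadratic part together with the weak--weak continuity of $\calF$), the displayed inequality passes to the limit as
\begin{align*}
 s-\hat t(s)\ \le\ \calI(z_0) + \int_0^s\langle\hat\ell(r),\rmd\hat z(r)\rangle - \calI(\hat z(s)) . \tag{A}
\end{align*}
The chain rule of Proposition~\ref{prop.chainrules} applied to $s\mapsto\calI(\hat z(s))$, together with the Fenchel--Young bound $\langle\rmD\hat\calE(r,\hat z(r)),\rmd\hat z(r)\rangle\ge-\calR[\hat z'](r)-\norm{\hat z'(r)}_\bbV\,\vve(\hat\ell(r),\hat z(r))$ and the fact that $\vve(\hat\ell(r),\hat z(r))=0$ for a.e.\ $r\notin G$ (because $\hat\ell$ has only countably many discontinuities), yields
\begin{align*}
 \calI(z_0) + \int_0^s\langle\hat\ell(r),\rmd\hat z(r)\rangle - \calI(\hat z(s))\ \le\ \int_0^s\calR[\hat z'](r)\,\dr + \int_{(0,s)\cap G}\norm{\hat z'(r)}_\bbV\,\vve(\hat\ell(r),\hat z(r))\,\dr\ =:\ D(s). \tag{B}
\end{align*}
Finally $D_\varepsilon(s)=s-\hat t_\varepsilon(s)\to s-\hat t(s)$, and the lower semicontinuity $\liminf_\varepsilon D_\varepsilon(s)\ge D(s)$ — a Reshetnyak/Ioffe--type argument using the weak$*$ convergence of the derivative measures $\hat z_\varepsilon'\,\dr$ in $\calV$ together with $\liminf_\varepsilon\vve(\hat\ell_\varepsilon(r),\hat z_\varepsilon(r))\ge\vve(\hat\ell(r),\hat z(r))$ — gives $D(s)\le s-\hat t(s)$, call it (C). Chaining (A), (B) and (C) forces all three to be equalities: on one hand $\calI(\hat z(s))+D(s)=\calI(z_0)+\int_0^s\langle\hat\ell,\rmd\hat z\rangle$, which is \eqref{eq.energydissipidentitylimitpparam}; on the other hand $s-\hat t(s)=D(s)$ for every $s$, whose differentiation is exactly \eqref{eq.normalized.sol}.

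For the complementarity, the viscous inclusion gives $-\rmD\calE(t,z_\varepsilon(t))-\varepsilon\bbV\dot z_\varepsilon(t)\in\partial\calR(\dot z_\varepsilon(t))\subset\partial\calR(0)$, hence $\dist_\bbV(-\rmD\calE(t,z_\varepsilon(t)),\partial\calR(0))\le\varepsilon\norm{\dot z_\varepsilon(t)}_\bbV$; reparameterizing, $\hat t_\varepsilon'(s)\,\vve(\hat\ell_\varepsilon(s),\hat z_\varepsilon(s))\le\varepsilon\norm{\hat z_\varepsilon'(s)}_\bbV$. Together with $\norm{\hat z_\varepsilon'(s)}_\bbV\,\vve(\hat\ell_\varepsilon(s),\hat z_\varepsilon(s))\le1$ (from the $\varepsilon$--normalization) this yields $\hat t_\varepsilon'(s)\le\varepsilon/\delta^2$ on the set $\{\vve(\hat\ell_\varepsilon(s),\hat z_\varepsilon(s))\ge\delta\}$, so $\int_{\{\vve(\hat\ell_\varepsilon,\hat z_\varepsilon)\ge\delta\}}\hat t_\varepsilon'\,\dr\le\varepsilon S_\varepsilon/\delta^2\to0$. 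For fixed $\delta'>0$, on $\{r:\vve(\hat\ell(r),\hat z(r))>\delta'\}$ one has $\liminf_\varepsilon\vve(\hat\ell_\varepsilon(r),\hat z_\varepsilon(r))\ge\vve(\hat\ell(r),\hat z(r))>\delta'$ (weak lower semicontinuity of $\dist_\bbV$ with the convergences of $\rmD\calI(\hat z_\varepsilon(r))$ and $\hat\ell_\varepsilon(r)$); splitting this set accordingly, bounding $\hat t_\varepsilon'\le1$ on the complement, and using $\hat t_\varepsilon'\overset{*}{\rightharpoonup}\hat t'$ in $L^\infty$ together with dominated convergence shows $\int_{\{\vve(\hat\ell(\cdot),\hat z(\cdot))>\delta'\}}\hat t'\,\dr=0$; letting $\delta'\to0$ and using $\hat t'\ge0$ gives \eqref{eq.complementarity}, and since $G\subset\{\vve(\hat\ell(\cdot),\hat z(\cdot))>0\}$ also $\hat t'=0$ a.e.\ on $G$. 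That $G$ is open is immediate from Lemma~\ref{lem:vvmlsc}: $s\mapsto\vvm(\hat\ell(s),\hat z(s))$ is lower semicontinuous (apply the lemma with $v_n=\hat z(s_n)$, using $\rmD\calI(\hat z)\in C_\text{weak}([0,S];\calV^*)$). Lastly, on a compact $K\subset G$ one has $\inf_K\vvm(\hat\ell(\cdot),\hat z(\cdot))=:\delta>0$, hence $\vve(\hat\ell(r),\hat z(r))\ge\delta$ on $K$, and \eqref{eq.normalized.sol} then forces $\norm{\hat z'(r)}_\bbV\le1/\delta$ for a.e.\ $r\in K$; since the derivative measure of $\hat z$ has no singular part ($\hat z\in AC^\infty([0,S];\calX)$) this gives $\hat z\in W^{1,\infty}(K;\calV)$, i.e.\ $\hat z\in W^{1,\infty}_\text{loc}(G;\calV)$.

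I expect the main obstacle to be the passage to the limit in the power term under the discontinuous load: one must recognise it as a Kurzweil integral, justify its convergence from the merely pointwise (though variation--bounded) convergence $\hat\ell_\varepsilon\to\hat\ell$ and the uniform $\calV$--convergence of $\hat z_\varepsilon$, and verify that the limit coincides with $\int_0^s\langle\hat\ell,\rmd\hat z\rangle$ for the ``jump--filled'' representative $\hat\ell$ of \eqref{eq.charhatell}, the continuity of $\hat z$ in $\calV$ being precisely what rules out a simultaneous jump of state and load. The secondary, more routine but still delicate, point is the lower semicontinuity of the mixed dissipation $\int\norm{\hat z_\varepsilon'}_\bbV\,\vve(\hat\ell_\varepsilon,\hat z_\varepsilon)\,\dr$, where only the weak convergence of $\hat z_\varepsilon'$ (as a measure) and of $\rmD\calI(\hat z_\varepsilon)$ is available.
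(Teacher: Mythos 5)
Your overall architecture is elegant and genuinely different from the paper's: you reduce the normalization condition \eqref{eq.normalized.sol} to the exact identity $D_\varepsilon(s)=s-\hat t_\varepsilon(s)$ (obtained by integrating \eqref{eq.normalizationeps}), and then sandwich $D(s)$ between $s-\hat t(s)$ from above (your (A), via the Young-inequality estimate of the viscous EDI) and from below (your (C), by lower semicontinuity). Steps (A) and (C), the openness of $G$, the local $W^{1,\infty}(G;\calV)$ regularity, and the complementarity argument are all sound, if slightly compressed in the details (e.g., the Egorov/Fatou step hidden in your complementarity argument, and the fact that (C) really needs the $W^{1,\infty}_{\rm loc}(G;\calV)$ bound on $\hat z_\varepsilon'$ established \emph{before} invoking lower semicontinuity of the mixed dissipation, which the paper does by first fixing a compact $K\Subset G$).

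However, there is a genuine gap in step (B). You invoke Proposition~\ref{prop.chainrules} for $s\mapsto\calI(\hat z(s))$, but that chain rule is stated under the hypothesis $z\in W^{1,1}((0,T);\calV)$, and the limit function satisfies only $\hat z\in AC^\infty([0,S];\calX)\cap C([0,S];\calV)\cap BV([0,S];\calZ)$ globally, together with $\hat z\in W^{1,\infty}_{\rm loc}(G;\calV)$ on $G$. On $[0,S]\setminus G$, $\hat z'$ is an a.e.\ derivative in $\calX$ only; in particular, the pointwise pairing $\langle\rmD\calI(\hat z(s)),\hat z'(s)\rangle_{\calV^*,\calV}$ (and the quantity $\norm{\hat z'(s)}_\bbV$ appearing in your ``Fenchel--Young bound'') is not well defined there. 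This is not a cosmetic deficiency: avoiding the chain rule is precisely the crux of the lower-bound estimate. The paper circumvents it by working with difference quotients and the $\lambda$-convexity estimate \eqref{est.lambda-convex-energy}, which needs only $\calR(\triangle_h\hat z(s))\,\norm{\triangle_h\hat z(s)}_\calV$ (differences, not derivatives), then integrates in $s$, uses Lemma~\ref{lem:kurz-convdiffquot} for the power term, exploits $\hat z\in C([0,S];\calV)$ to kill the quadratic error $C\norm{\triangle_h\hat z}_\calV\calR(\triangle_h\hat z)$ as $h\to0$, and finally passes $h\searrow0$ separately on $G$ (where the $W^{1,\infty}_{\rm loc}$ regularity already established justifies $\triangle_h\hat z/h\to\hat z'$ in $\calV$) and on $[0,S]\setminus G$ (where $\vve(\hat\ell,\hat z)=0$ a.e.\ kills the $\calV$-term). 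To repair your proof you would need to replace the chain-rule invocation by this discrete $\lambda$-convexity argument (or prove a chain rule valid for $AC^\infty([0,S];\calX)\cap C([0,S];\calV)$ functions with $\rmD\calI(\hat z)\in L^\infty(0,S;\calV^*)$, which is not available in the paper).

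A smaller remark: as you wrote it, the lower-semicontinuity step (C) also leans on ``weak$^*$ convergence of the derivative measures $\hat z_\varepsilon'\,\dr$ in $\calV$'', which is not uniform on $[0,S]$; again, you must restrict to compact $K\Subset G$, where the normalization \eqref{eq.normalizationeps} and the lower bound $\vvm\geq c_K>0$ give $\norm{\hat z_\varepsilon'}_{L^\infty(K;\calV)}\leq 2/c_K$, and combine with Helly for the $\calR$-term on all of $[0,S]$. You do prove the $W^{1,\infty}_{\rm loc}(G;\calV)$ regularity at the end, but it must come before (C), not after.
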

The integral on the right hand side in 
\eqref{eq.energydissipidentitylimitpparam} is understood as a Kurzweil 
integral, see Appendix \ref{sec.Kurzweil}. 

\begin{proof}
For $\varepsilon>0$ let $z_\varepsilon$ be a solution to \eqref{eq.viscincl01} 
and let $(S_\varepsilon,\hat t_\varepsilon,\hat 
z_\varepsilon,\hat \ell_\varepsilon)_{\varepsilon>0}$ be a sequence constructed 
from $(z_\varepsilon)_{\varepsilon}$  that converges to $(S,\hat t,\hat z,\hat 
\ell)$ as stated Proposition \ref{prop.param-conv}. The aim is to show that 
$(S,\hat t,\hat z,\hat \ell)$ has the properties formulated in Theorem 
\ref{ex.ri.pparam}.

Complementarity identity \eqref{eq.complementarity}: 
Since $\partial\calR(\dot z_\varepsilon(t))\subset\partial\calR(0)$, from 
\eqref{eq.viscincl01} we deduce 
\begin{align}
\label{est.exproofpp1}
 -\rmD\calE(\hat t_\varepsilon(s),\hat z_\varepsilon(s))\in \partial\calR(0) + 
\tfrac{\varepsilon}{\hat t'_\varepsilon(s)}\bbV\hat z_\varepsilon'(s), 
\end{align}
which implies that $\dist_\bbV(-\rmD\calE(\hat t_\varepsilon(s),\hat 
z_\varepsilon(s)),\partial\calR(0))\leq \tfrac{\varepsilon}{\hat 
t'_\varepsilon(s)}\norm{\hat z_\varepsilon'(s)}_\bbV$.  
Since $\partial\calR(0)$ is bounded in $\calV^*$, by lower 
semicontinuity and in combination  with 
\eqref{est.ztrafo1} and \eqref{est.convparam5} it follows that 
$\rmD\hat \calE(\cdot,\hat z(\cdot))\in L^\infty((0,S);\calV^*)$. Moreover, 
since $\varepsilon\norm{\dot{z}_\varepsilon}^2_{L^2((0,T);\calV)}$ is uniformly 
bounded (cf.\ \eqref{eq.EDP02} and Proposition \ref{prop.unifboundeps}),  
 we obtain  
\[
\sup_\varepsilon\varepsilon\norm{(\hat t_\varepsilon')^{-\frac{1}{2}}
\hat z_\varepsilon'}_{L^2((0,S);\calV)}^2 =
\sup_\varepsilon\varepsilon\norm{\dot 
z_\varepsilon}_{L^2((0,T);\calV)}^2 =:C <\infty.
\]
Since $\hat 
t_\varepsilon'(s)\leq 1$, we therefore arrive at 
$\int_0^S\big(\hat t_\varepsilon'\dist_\bbV(-\rmD\calE(\hat 
t_\varepsilon,\hat z_\varepsilon),\partial\calR(0))\big)^2\ds
\leq \varepsilon C$.
Thanks to \eqref{est.convparam5}, for almost every $s$  we 
have $\liminf_\varepsilon \dist_\bbV(-\rmD\calE(\hat 
t_\varepsilon(s),\hat z_\varepsilon(s)),\partial\calR(0))\geq 
\dist_\bbV(-\rmD\hat \calE(s,  
\hat z(s)),\partial\calR(0))$. Hence, Proposition 
\ref{app_prop:lsc} implies 
\begin{multline*}
 0\geq \liminf_\varepsilon \int_0^S(\hat 
t_\varepsilon')^2\dist_\bbV(-\rmD\calE(\hat 
t_\varepsilon,\hat z_\varepsilon),\partial\calR(0))^2\ds
\\
\geq \int_0^S (\hat 
t'(s))^2 \dist_\bbV(-\rmD\hat \calE(s,\hat z(s)),\partial\calR(0))^2\ds\geq 0
\end{multline*}
from which \eqref{eq.complementarity} is an immediate consequence. 

Energy dissipation 
estimate \eqref{eq.energydissipidentitylimitpparam}, $\leq$: 
For every $\varepsilon>0$ 
and $s\in [0,S]$ we have the energy dissipation estimate
\begin{align}
 \calI(\hat z_\varepsilon(s)) + \int_0^s \vvp(\hat 
z_\varepsilon'(r),-\rmD\calE(\hat t_\varepsilon(r),\hat z_\varepsilon(r))\dr 
\leq \calI(z_0) + \int_0^s\langle \hat\ell_\varepsilon(r),\hat 
z_\varepsilon'(r)\rangle\dr,
\end{align}
which is a reparameterized version of \eqref{eq.EDP02} in combination 
with the estimate for $\vvp(\cdot,\cdot)$. 

Thanks to Proposition \ref{prop:kurz-conv} we have 
$\int_0^s\langle\hat\ell_\varepsilon,\hat z_\varepsilon'\rangle\dr \to 
\int_0^s\langle \hat\ell(r),\rmd \hat z(r)\rangle$, where the last term is to 
be interpreted as a Kurzweil integral. By lower semicontinuity, for every $s$  
it holds $\liminf_\varepsilon\calI(\hat z_\varepsilon(s))\geq \calI(\hat z(s))$ 
and it remains to pass to the limit inferior in the dissipation integral. Again 
by Helly, \cite[Theorem 3.2]{MaMi05}, we obtain
\[
 \liminf_{\varepsilon\to 0}\int_0^s \calR(\hat z_\varepsilon'(r))\dr \geq 
\Var_\calR(\hat z,[0,s])=\int_0^s\calR[\hat z'](r)\dr, 
\]
where for the last identity we have applied 
Lemma \ref{app.lembv1} with $p=\infty$.  

The remaining term $\int_0^s \norm{\hat z'_\varepsilon(r)}_\bbV 
\vve(\hat\ell_\varepsilon(r),\hat z_\varepsilon(r))\dr$  is 
more delicate and we follow the arguments in \cite{MRS16} exploiting in 
addition   the uniform bound $\rmD\calI(\hat z_\varepsilon)\in 
L^\infty((0,S_\varepsilon);\calV^*)$. We recall the definition of 
$\vvm(\cdot,\cdot)$ in \eqref{def.vvm}.  The set  
\begin{align*}
 G=\Set{s\in [0,S]}{\vvm(\hat \ell(s),\hat 
z(s))>0 }\,
\end{align*}
 is relatively open 
(w.r.\ to $[0,S]$). Indeed, let  
$(s_n)_n\subset[0,S]\backslash G$ with $s_n\to s$. By Proposition 
\ref{prop.param-conv} we have $\rmD\calI(\hat z(s_n))\rightharpoonup
\rmD\calI(\hat z(s))$ weakly in $\calV^*$. Hence, with Lemma \ref{lem:vvmlsc} 
we obtain $0=\liminf_n m(\hat \ell(s_n),\hat z(s_n))\geq m(\hat\ell(s),\hat 
z(s))=0$, consequently  $s\notin G$.  

Next, as in \cite{MRS16}, we derive an improved uniform regularity estimate for 
$(\hat 
z_\varepsilon)_\varepsilon$ that is valid on compact subsets of $G$ and that 
allows us to give a meaning to $\hat z'$ on $G$.  Let $K\subset G$ be compact. 
By lower semicontinuity it follows that $c:=\inf_K \vvm(\hat\ell(s),\hat 
z(s))$ is positive. Again by lower semicontinuity for every $s\in K$ it holds
\begin{align*}
 \liminf_\varepsilon \vve(\hat \ell_\varepsilon(s),\hat z_\varepsilon(s))\geq 
\vvm(\hat \ell(s),\hat z(s))\geq c.
\end{align*}
Hence, for every $s\in K$ there exists $\varepsilon_0>0$ such that for all 
$\varepsilon<\varepsilon_0$ we have $\vve(\hat\ell_\varepsilon(s),\hat 
z_\varepsilon(s))\geq c/2$. A proof by contradiction shows that $\varepsilon_0$ 
in fact can  be chosen independently of $s\in K$. 
From the 
normalization property 
\eqref{eq.normalizationeps} we therefore deduce that 
$\sup_{\varepsilon<\varepsilon_0}\norm{\hat 
z_\varepsilon'}_{L^\infty(K;\calV)}\leq 2/c$ and hence $(\hat 
z_\varepsilon)_\varepsilon$ converges weakly$*$ in 
$W^{1,\infty}(K;\calV)$ to $\hat 
z$. 
Now we are in the position to apply Proposition \ref{app_prop:lsc} to conclude 
that 
\begin{multline}
 \label{est.proof-ex01}
\liminf_\varepsilon\int_K
\norm{\hat z_\varepsilon'(s)}_\calV
\dist_\bbV(-\rmD\calE(\hat t_\varepsilon(s),\hat 
z_\varepsilon(s)),\partial\calR(0))\ds 
\\
\geq \int_K \norm{\hat z'(s)}_\calV \dist_\bbV(-\rmD\hat\calE(s,\hat 
z(s)),\partial\calR(0))\ds.
\end{multline}
In summary we have proved 
\eqref{eq.energydissipidentitylimitpparam} with $\leq $ instead of equality. 
By similar arguments we obtain 
\eqref{eq.normalized.sol} with $\geq$ instead of equality.

In order to prove that in fact an identity is valid in 
\eqref{eq.energydissipidentitylimitpparam} and \eqref{eq.normalized.sol} we 
follow ideas 
from \cite{MielkeRossiSavare_COCV12}. 
For $s\in [0,S]$ let $\mu(s)\in \partial\calR(0)$ with $\bnorm{-\rmD\hat 
\calE(s,\hat z(s)) -\mu(s)}_{\calV^*}=\dist_{\bbV}(-\rmD\hat \calE(s,\hat 
z(s)),\partial\calR(0))$. Then from \eqref{est.lambda-convex-energy} for every 
$s\in [0,S)$ and $h>0$ (such that $s+h\in [0,S]$ and with $\triangle_h\hat 
z(s)=\hat z(s+h) - \hat z(s)$) we obtain 
\begin{multline}
\label{est.lower-engest}
 \calI(\hat z(s+h)) - \calI(\hat z(s)) 
 \\ \geq 
 \langle \rmD\hat\calE(s,\hat z(s)) + \mu(s),
 \triangle_h \hat z(s)
 \rangle 
 +\langle\hat \ell(s),
 \triangle_h \hat z(s)
 \rangle 
 - \langle \mu(s),
 \triangle_h\hat z(s)
 \rangle 
\\
-C\norm{\triangle_h\hat z(s)}_\calV\calR(\triangle_h\hat z(s))\,.
\end{multline}
Thanks to the definition of $\mu$ we have the estimates 
\begin{multline*}
 \langle \rmD\hat\calE(s,\hat z(s)) + \mu(s),
 \triangle_h \hat z(s)
 \rangle 
 \\
 \leq \norm{\rmD\hat\calE(s,\hat z(s)) + 
\mu(s)}_{\bbV^*}\norm{\triangle_h\hat z(s)}_\bbV
= \dist_{\bbV}(-\rmD\hat\calE(s,\hat 
z(s)),\partial\calR(0))\norm{\triangle_h\hat z(s)}_\bbV
\end{multline*}
and $\calR(v)\geq \langle \mu(s),v\rangle$ 
for all $v\in \calZ$. Hence, after rearranging the terms in 
\eqref{est.lower-engest} and 
integration with respect to $s$, for $\sigma_1<\sigma_2\leq S-h$ we find 
\begin{multline*}
 \int_{\sigma_1}^{\sigma_2} h^{-1}(\calI(\hat z(s+h)) - \calI(\hat z(s)))\ds 
 \\
 +
 \int_{\sigma_1}^{\sigma_2} (1 + C\norm{\triangle_h\hat 
z(s)}_\calV)\calR(h^{-1}\triangle_h\hat z(s)) 
+ \dist_{\bbV}(-\rmD\hat\calE(s,\hat 
z(s)),\partial\calR(0))\norm{h^{-1}\triangle_h\hat z(s)}_\calV\ds 
\\
\geq \int_{\sigma_1}^{\sigma_2}\langle \hat \ell(s), 
h^{-1}\triangle_h \hat z(s)\rangle\ds\,.
\end{multline*}
The next aim is to pass to the limit $h\searrow 0$ in this energy 
dissipation estimate.
 Lemma \ref{lem:kurz-convdiffquot} implies that $\lim_{h\searrow 0} 
\int_{\sigma_1}^{\sigma_2}\langle \hat \ell(s), h^{-1}\triangle_h 
\hat z(s)\rangle\ds 
=\int_{\sigma_1}^{\sigma_2} \langle \hat \ell(s),\rmd \hat 
z(s)\rangle$. Moreover, since 
$s\mapsto\calI(\hat z(s))$ is uniformly continuous (cf.\ Proposition 
\ref{prop.param-conv}), 
for the first term on the left hand side we obtain 
\[
\lim_{h\searrow 0} \int_{\sigma_1}^{\sigma_2}h^{-1}(\calI(\hat 
z(s+h)) - \calI(\hat 
z(s)))\ds =\calI(\hat z(\sigma_2))-\calI(\hat z(\sigma_1)).
\]
Since $\hat 
z\in C([0,S];\calV)$, we obtain  $\triangle_h\hat z(s)\to 0$ strongly in 
$\calV$ and uniformly in $s$. Furthermore, since $z\in 
AC^\infty([0,S];\calX)$,   the limit 
$\lim_{h\searrow 0} \calR(h^{-1}\triangle_h z_h)$ exists for almost all $s$ and 
equals to $\calR[\hat z'](s)$, cf.\ Appendix \ref{app.bvac}. By the Lebesgue 
Theorem we thus obtain 
\[
 \lim_{h\searrow 0}
 \int_{\sigma_1}^{\sigma_2}
 (1 + C\norm{\triangle_h\hat 
z(s)}_\calV)\calR(h^{-1}\triangle_h\hat z(s)) \ds 
= \int_{\sigma_1}^{\sigma_2}\calR[\hat z'](s)\ds. 
\]
The definition of $G$ and that fact that $\vve(\hat\ell(s),\hat z(s))$ and 
$\vve(\hat\ell(s\pm),\hat z(s))$ differ in at most countably many points 
imply that  $\vve(\hat\ell(s),\hat z(s))=0$ for almost all $s\in 
[0,S]\backslash G$. 
Thus,
\begin{align*}
 \int_{\sigma_1}^{\sigma_2}\!\!\!\!\!
 \dist_{\bbV}(-\rmD\hat\calE(s,\hat 
z(s)),\partial\calR(0))
\bnorm{\tfrac1h\triangle_h\hat z(s)}_\calV\ds 
=\int_{(\sigma_1,\sigma_2)\cap G}\!\!\!\!\!\!\!\!\!\!\!\!
\vve(\hat\ell(s),\hat z(s))
\norm{\tfrac1h\triangle_h\hat z(s)}_\calV\ds .
\end{align*}
Since $\hat z\in W^{1,\infty}_\text{loc}(G;\calV)$, by Lebesgue's theorem we 
deduce for each $K\Subset G$ 
\begin{align*}
 \lim_{h\searrow 0} \int_{(\sigma_1,\sigma_2)\cap K} 
 \vve(\hat\ell(s),\hat z(s))
\norm{h^{-1}\triangle_h\hat z(s)}_\calV\ds 
= \int_{(\sigma_1,\sigma_2)\cap K} 
\vve(\hat\ell(s),\hat z(s))
\norm{\hat z'(s)}_\calV\ds. 
\end{align*}
To summarize, we have shown the following: By continuity of 
$\calI(\hat z(\cdot))$ and taking into account Proposition 
\ref{prop:kurz-conv}, 
for all $(a,b)\subset G$ we have 
\begin{align}
 \calI(\hat z(b)) -\calI(\hat z(a)) 
 + \int_a^b \calR[\hat z'(s)] + \vve(\hat \ell(s),\hat z(s))\norm{\hat 
z'(s)}_\bbV\ds 
\geq \int_a^b \langle \hat\ell(s),\rmd\hat z(s)\rangle,
\end{align}
while for every $[\alpha,\beta]\subset[0,S]\backslash G$ 
\begin{align}
 \calI(\hat z(\beta)) -\calI(\hat z(\alpha)) 
 + \int_\alpha^\beta \calR[\hat z'(s)] \ds
 \geq \int_\alpha^\beta \langle \hat\ell(s),\rmd\hat 
z(s)\rangle.
\end{align}
Since $G$ is the disjoint union of at most countably many (relatively) 
open intervals and keeping in mind 
\cite[Proposition 1.4]{KrejciLiero09}, a telescopic sum argument finally 
implies  that for all 
$\sigma_1 <\sigma_2\in [0,S]$ the energy dissipation estimate 
\begin{multline*}
 \calI(\hat z(\sigma_2)) -\calI(\hat z(\sigma_1)) 
 + \int_{\sigma_1}^{\sigma_2} \calR[\hat z'(s)] \ds 
 +
 \int_{(\sigma_1,\sigma_2)\cap G}\vve(\hat \ell(s),\hat 
z(s))\norm{\hat 
z'(s)}_\bbV\ds 
\\
\geq \int_{\sigma_1}^{\sigma_2} \langle \hat\ell(s),\rmd\hat z(s)\rangle
\end{multline*}
is valid.
Together with the opposite estimate (i.e.\ 
\eqref{eq.energydissipidentitylimitpparam} with $\leq$) we finally obtain 
\eqref{eq.energydissipidentitylimitpparam} with an equality. 

Improved convergences: By standard arguments it follows that in fact for all 
$s\in [0,S]$ it holds
\begin{gather*} 
\lim_{\varepsilon\to 0} \calI(\hat z_\varepsilon(s))=\calI(\hat z(s)),\\
\lim_{\varepsilon\to 0}\int_0^s\calR(\hat z_\varepsilon'(r))\dr 
=\int_0^s\calR[\hat z'](r)\dr,\\
\lim_{\varepsilon\to 0} \int_0^s \norm{\hat z_\varepsilon'(r)}_\bbV
\dist_\bbV(-\rmD\calE(\hat t_\varepsilon(r),\hat 
z_\varepsilon(r)),\partial\calR(0))\dr \\=\int_{(0,s)\cap G} 
\norm{\hat z'(r)}_\bbV
\dist_\bbV(-\rmD\hat\calE(r,\hat 
z(r)),\partial\calR(0))\dr\,.
\end{gather*}
In order to prove that the limit solution is normalized, i.e.\ in order to 
verify \eqref{eq.normalized.sol}, we rewrite $\int_0^s \calR(\hat 
z_\varepsilon'(r)) + \norm{\hat 
z_\varepsilon'(r)}_\bbV\dist_\bbV(-\rmD\calE(\hat t_\varepsilon(r),\hat 
z_\varepsilon(r)),\partial\calR(0))\dr =\int_0^s (1-\hat t_\varepsilon'(r))\dr$ 
and use the above convergences to conclude.
\end{proof}
\begin{definition}
\label{def.pparamsol-version2}
 Assume \eqref{eq.assumptionvv1}. A tuple $(S,\hat t,\hat z,\hat \ell)$ with 
 $S>0$, $\hat t\in W^{1,\infty}((0,S);\R)$, $\hat z\in 
AC^\infty([0,S];\calX)\cap L^\infty((0,S);\calZ)$ and 
  $\hat\ell\in BV([0,S];\calV^*)$ 
  is a normalized, $\vvp$-parameterized balanced 
viscosity solution  of the 
rate-independent system associated with $(\calI,\calR,\ell,z_0)$ if 
$\hat \ell$ is of the form \eqref{eq.charhatell}, if there exists an open set 
$G\subset[0,S]$ such that $\hat z\in W^{1,1}_{\text{loc}}(G;\calV)$, 
   $\rmD\hat\calE(\cdot,\hat z(\cdot))\in L^\infty_\text{loc}(G;\calV^*)$  
and such 
that    
$\vvm(\hat\ell(s),\hat z(s))>0$ for all $s\in G$  and $\vvm(\hat\ell(s),\hat 
z(s))=0$ for all  $s\in [0,S]\backslash G$, and if
\eqref{eq.pparamsol-con1}--\eqref{eq.energydissipidentitylimitpparam} are 
satisfied.  

With $\calL(\ell,z_0)$ we denote the set of normalized, $\vvp$-parameterized 
balanced viscosity solutions  associated with $(\calI,\calR,\ell,z_0)$.
\end{definition}
If \eqref{eq.assumptionvv1} is satisfied then by Theorem \ref{ex.ri.pparam} the 
set   $\calL(\ell,z_0)$ is not empty. 
\section{Properties of the solution set} 
\label{s-properties}
The next lemma shows that all elements of $\calL(\ell,z_0)$ enjoy the same 
regularity properties as the limit functions obtained in Proposition 
\ref{prop.param-conv} (except possibly the $BV([0,S];\calZ)$ regularity) with 
bounds that are uniform with respect to the set 
$\calL(\ell,z_0)$. While estimates 
\eqref{eq.unif-est1-paramsola}--\eqref{eq.unif-est1-paramsolb} here below are 
immediate  
consequences of the energy dissipation balance 
\eqref{eq.energydissipidentitylimitpparam} and the 
normalization property \eqref{eq.normalized.sol}, the uniform $L^\infty$-bound  
for $\rmD\hat \calE$, i.e.\ \eqref{eq.unif-est1-paramsolc},  requires a more 
refined analysis.  

\begin{lemma}
 \label{lem.reg.paramsolv1}
 
Assume \eqref{eq.assumptionvv1}. 

Every normalized, $\vvp$-parameterized 
balanced 
viscosity solution $(S,\hat t,\hat z,\hat \ell)\in \calL(\ell,z_0)$  of the 
rate-independent 
system 
associated with 
$(\calI,\calR,\ell,z_0)$ (according to Definition~\ref{def.pparamsol-version2}) 
satisfies 
\begin{enumerate}
\item  $ \calI(\hat z(\cdot))$ belongs to $C([0,S];\R)$.
\item 
 $\hat t$ is constant on the closure of each connected component of $G$ and 
there 
exists a measurable function $\lambda:(0,S)\to[0,\infty)$ with 
$\lambda(s)=0$ on $(0,S)\backslash G$ such that on each connected 
component $(a,b)\subset G$ the differential inclusion
\begin{align} 
\label{eq.diffincllambda}
 0\in\partial\calR(\hat z'(s)) +\lambda(s)\bbV\hat z'(s) + 
\rmD\hat\calE(s,\hat z(s))
\end{align}
is satisfied, for almost all $s\in (a,b)$. 
\\
For almost all $s\in G$ we have 
$\lambda(s)=\dist_\bbV(-\rmD\hat\calE(s,\hat z(s)),\partial\calR(0))/\norm{\hat 
z'(s)}_\bbV$. 
\item Estimates: \\ There exists a constant $c>0$ (depending on 
$\norm{z_0}_\calZ$, $\dist_\bbV(-\rmD\calE(0,z_0),\partial\calR(0))$,
 $\norm{\ell}_{L^\infty(0,T;\calV^*)}$, $\Var_{\calV^*}(\ell,[0,T])$, and 
$\diam_{\calV^*}(\partial\calR(0))$, only) such that 
for all normalized, $\vvp$-parameterized balanced viscosity solutions 
associated 
with $(z_0,\ell)$ it 
holds  $\rmD\hat \calE(\cdot,\hat z(\cdot))\in L^\infty((0,S);\calV^*)$ and 
\begin{gather}
 \label{eq.unif-est1-paramsola}
\norm{\hat z}_{L^\infty((0,S);\calZ)}\leq c, \,\, S\leq c,\\
\int_0^S 
\calR[\hat z'](s)\ds + \int_{(0,S)\cap G}\norm{\hat 
z'(s)}_\bbV\dist_\bbV(-\rmD\hat \calE(s,\hat z(s)),\partial\calR(0))\ds\leq c.
\label{eq.unif-est1-paramsolb}
\\
\bnorm{\rmD\hat \calE(\cdot,\hat z(\cdot))}_{L^\infty((0,S);\calV^*)}\leq c,
\label{eq.unif-est1-paramsolc}
\\
\norm{\lambda \bbV\hat z'}_{L^\infty(G;\calV^*)}\leq c.
\label{eq.unif-est1-paramsold}
\end{gather}
Finally, $\rmD\calI(\hat 
z(\cdot))\in C_\text{weak}([0,S];\calV^*)$. 
\end{enumerate} 
\end{lemma}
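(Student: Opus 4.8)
\emph{Preliminary regularity and item (1).} Before addressing the three items I would record that every $(S,\hat t,\hat z,\hat\ell)\in\calL(\ell,z_0)$ satisfies $\hat z\in C([0,S];\calV)\cap C_\text{weak}([0,S];\calZ)$: indeed $\hat z\in AC^\infty([0,S];\calX)\subset C([0,S];\calX)$ and $\hat z\in L^\infty((0,S);\calZ)$, so if $s_n\to s$ then $(\hat z(s_n))_n$ is bounded in $\calZ$ and converges to $\hat z(s)$ in $\calX$, hence every weak $\calZ$-accumulation point equals $\hat z(s)$, so $\hat z(s_n)\rightharpoonup\hat z(s)$ in $\calZ$ and, by $\calZ\Subset\calV$, strongly in $\calV$. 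Lower semicontinuity of $s\mapsto\calI(\hat z(s))$ then follows from the weak lower semicontinuity of the nonnegative quadratic form $v\mapsto\tfrac12\langle Av,v\rangle$ (see \eqref{eq.Mief0001}) and the weak--weak continuity of $\calF$ in \eqref{ass.fweakconv}. For the opposite inequality I would use the energy--dissipation balance \eqref{eq.energydissipidentitylimitpparam}: evaluated at $s=S$ it shows that the two dissipation integrands lie in $L^1(0,S)$, so both dissipation integrals are absolutely continuous functions of $s$, while $s\mapsto\int_0^s\langle\hat\ell(r),\rmd\hat z(r)\rangle$ is continuous because $\hat\ell\in BV$, $\hat z\in C([0,S];\calX)\cap BV([0,S];\calX)$ and the continuity of $\hat z$ in $\calX$ kills all jumps of this Kurzweil integral (Appendix \ref{sec.Kurzweil}). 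Solving \eqref{eq.energydissipidentitylimitpparam} for $\calI(\hat z(s))$ gives the continuity claimed in item (1).

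\emph{Item (2).} On $G$ one has $0<\vvm(\hat\ell(s),\hat z(s))\le\vve(\hat\ell(s),\hat z(s))=\dist_\bbV(-\rmD\hat\calE(s,\hat z(s)),\partial\calR(0))$, so the complementarity identity \eqref{eq.complementarity} forces $\hat t'=0$ a.e.\ on $G$ and, $\hat t$ being continuous, $\hat t$ is constant, equal to some $t_*$, on the closure of each connected component $(a,b)$ of $G$; by \eqref{eq.charhatell} the restriction $\hat\ell|_{(a,b)}$ is then piecewise constant with at most one jump point $s_*$. On a compact $K\Subset G$, lower semicontinuity of $\vvm$ (Lemma \ref{lem:vvmlsc}) gives $\inf_K\vvm>0$, so the normalization \eqref{eq.normalized.sol} (with $\hat t'=0$) yields $\norm{\hat z'}_{L^\infty(K;\calV)}\le(\inf_K\vvm)^{-1}$, i.e.\ $\hat z\in W^{1,\infty}_\text{loc}(G;\calV)$, and it also forces $\hat z'\neq0$ a.e.\ on $G$. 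Since $\rmD\calI(\hat z(\cdot))=\rmD\hat\calE(\cdot,\hat z(\cdot))+\hat\ell\in L^\infty_\text{loc}(G;\calV^*)$, the chain rule of Proposition \ref{prop.chainrules} turns the localized balance \eqref{eq.energydissipidentitylimitpparam} on any $[\sigma_1,\sigma_2]\Subset(a,b)\setminus\{s_*\}$ into
\[
\int_{\sigma_1}^{\sigma_2}\big(\langle\rmD\hat\calE(s,\hat z(s)),\hat z'(s)\rangle+\calR(\hat z'(s))+\norm{\hat z'(s)}_\bbV\dist_\bbV(-\rmD\hat\calE(s,\hat z(s)),\partial\calR(0))\big)\ds=0 .
\]
The integrand is pointwise nonnegative: picking $\mu\in\partial\calR(0)$ realising the distance one has $\langle\mu,\hat z'\rangle\le\calR(\hat z')$ and $\langle-\rmD\hat\calE-\mu,\hat z'\rangle\le\norm{\hat z'}_\bbV\dist_\bbV(\cdots)$ by the duality pairing defining $\dist_\bbV$. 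Hence the integrand vanishes a.e., and equality in these two elementary inequalities holds only if $-\rmD\hat\calE(s,\hat z(s))-\lambda(s)\bbV\hat z'(s)\in\partial\calR(\hat z'(s))$ with $\lambda(s):=\dist_\bbV(-\rmD\hat\calE(s,\hat z(s)),\partial\calR(0))/\norm{\hat z'(s)}_\bbV$ (well defined by $\hat z'\neq0$ a.e.\ on $G$); setting $\lambda\equiv0$ on $(0,S)\setminus G$ gives \eqref{eq.diffincllambda} and the stated formula for $\lambda$.

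\emph{Item (3), the bounds \eqref{eq.unif-est1-paramsola}--\eqref{eq.unif-est1-paramsolb}.} By \eqref{eq.Mief0001} and $\calF\ge0$ we have $\calI\ge0$, so \eqref{eq.energydissipidentitylimitpparam} at $s=S$ gives $\int_0^S\calR[\hat z']\dr+\int_{(0,S)\cap G}\norm{\hat z'}_\bbV\dist_\bbV(\cdots)\dr\le\calI(z_0)+\int_0^S\langle\hat\ell,\rmd\hat z\rangle$. Integrating \eqref{eq.energydissipidentitylimitpparam} by parts in the Kurzweil integral (legitimate since $\hat\ell\in BV$, $\hat z\in C\cap BV$; Appendix \ref{sec.Kurzweil}), combining with \eqref{eq.basic1}, and applying the Stieltjes/Kurzweil Gronwall inequality exactly as in the proof of Proposition \ref{prop.basicest} (cf.\ \cite{KrejciLiero09}) yields $\norm{\hat z(s)}_\calZ\le c_\calZ^{-1}\big(c_0+\calI(z_0)-\langle\hat\ell(0),z_0\rangle\big)\exp(\Var_{\calV^*}(\hat\ell,[0,S]))$; since $\hat z(0)=z_0$, $\norm{\hat\ell(0)}_{\calV^*}\le\norm{\ell}_{L^\infty(0,T;\calV^*)}$ and $\Var_{\calV^*}(\hat\ell,[0,S])\le\Var_{\calV^*}(\ell,[0,T])$ (the representation \eqref{eq.charhatell} does not increase the variation), this is the $\norm{\hat z}_{L^\infty((0,S);\calZ)}$ bound in \eqref{eq.unif-est1-paramsola} with a constant of the announced form. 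Finally $S=T+\int_0^S\calR[\hat z']\dr+\int_{(0,S)\cap G}\norm{\hat z'}_\bbV\dist_\bbV(\cdots)\dr$ by \eqref{eq.normalized.sol} (using $\hat t(S)-\hat t(0)=T$ from \eqref{eq.pparamsol-con1}), so the right-hand side of the first inequality is itself $\le c$, which gives $S\le c$ and \eqref{eq.unif-est1-paramsolb}.

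\emph{Item (3), the refined bound \eqref{eq.unif-est1-paramsolc}--\eqref{eq.unif-est1-paramsold}, and the main obstacle.} I would first reduce \eqref{eq.unif-est1-paramsolc} to \eqref{eq.unif-est1-paramsold}: a.e.\ on $(0,S)\setminus G$ one has $\vve(\hat\ell(s),\hat z(s))=0$ (since $\vvm=0$ there and $\vve(\hat\ell(s),\cdot),\vve(\hat\ell(s\pm),\cdot)$ agree off a countable set), hence $-\rmD\hat\calE(s,\hat z(s))\in\partial\calR(0)$, bounded in $\calV^*$ by $\diam_{\calV^*}(\partial\calR(0))$ (\cite[Lemma A.1]{Knees2018}); on $G$, item (2) writes $-\rmD\hat\calE(s,\hat z(s))=\mu^*(s)+\lambda(s)\bbV\hat z'(s)$ with $\mu^*(s)\in\partial\calR(0)$, so $\norm{\rmD\hat\calE(s,\hat z(s))}_{\calV^*}\le\diam_{\calV^*}(\partial\calR(0))+\norm{\lambda\bbV\hat z'}_{L^\infty(G;\calV^*)}$. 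It remains to bound $\norm{\lambda\bbV\hat z'}_{L^\infty(G;\calV^*)}$, equivalently $\dist_\bbV(-\rmD\hat\calE(\cdot,\hat z(\cdot)),\partial\calR(0))$ on $G$, and the plan is to re-run at the continuous level the estimate of Proposition \ref{prop.BV-estimate}: on a connected component $(a,b)\subset G$ the inclusion \eqref{eq.diffincllambda} holds with $\hat\ell$ piecewise constant, and testing the difference of \eqref{eq.diffincllambda} at $s$ and $s+h$ with $\hat z'(s+h)$, invoking \eqref{eq.Mief0001} and Lemma \ref{lem.estDF}, and letting $h\to0$ at Lebesgue points reproduces the recursion \eqref{eq.doro1}--\eqref{eq.doro2} in integral form; this controls $\dist_\bbV(-\rmD\hat\calE(\cdot,\hat z(\cdot)),\partial\calR(0))$ on $\overline{(a,b)}$ by its value at the left endpoint of $(a,b)$ plus $C\big(\Var_{\calV^*}(\hat\ell,\overline{(a,b)})+\Var_\calR(\hat z,\overline{(a,b)})\big)$. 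That left-endpoint value equals $\dist_\bbV(-\rmD\calE(0,z_0),\partial\calR(0))$ on the component containing $0$ (this is where $\rmD\calE(0,z_0)\in\calV^*$ is used) and $0$ on every other component, since at a point of $\partial G$ the pertinent one-sided $\vve$ vanishes; summing over the at most countably many components and using \eqref{eq.unif-est1-paramsolb} and $\Var_{\calV^*}(\hat\ell,[0,S])\le\Var_{\calV^*}(\ell,[0,T])$ produces a bound of the same structure as $C_1$ in Proposition \ref{prop.BV-estimate}, i.e.\ \eqref{eq.unif-est1-paramsolc}--\eqref{eq.unif-est1-paramsold}. For the last assertion, $\rmD\calI(\hat z(\cdot))\in C_\text{weak}([0,S];\calV^*)$: from $\hat z\in C_\text{weak}([0,S];\calZ)$ one gets $A\hat z\in C_\text{weak}([0,S];\calZ^*)$, which together with the $L^\infty((0,S);\calV^*)$ bound now available for $A\hat z=\rmD\hat\calE(\cdot,\hat z(\cdot))+\hat\ell-\rmD\calF(\hat z(\cdot))$ upgrades to weak continuity in $\calV^*$, while $\rmD\calF(\hat z(\cdot))\in C_\text{weak}([0,S];\calV^*)$ by \eqref{ass.fweakconv} and \eqref{ass.F01}; adding the two finishes. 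The genuinely delicate step is exactly \eqref{eq.unif-est1-paramsolc}--\eqref{eq.unif-est1-paramsold}: the regularity of $\lambda\bbV\hat z'$ that would justify differentiating \eqref{eq.diffincllambda} is precisely what one is trying to establish, so the difference-quotient version of \eqref{eq.doro1}--\eqref{eq.doro2} must be handled with care (working only at Lebesgue points, or else transferring the discrete bound \eqref{eq.doro2} through the construction of Theorem \ref{ex.ri.pparam}), and the bookkeeping over the countably many components of $G$ and their endpoints has to be organised so that the resulting constant depends only on the data listed in the statement.
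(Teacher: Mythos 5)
Your treatment of items (1) and (2) and of the elementary bounds \eqref{eq.unif-est1-paramsola}--\eqref{eq.unif-est1-paramsolb} is essentially sound and follows the same lines as the paper (the paper gets (1) more directly by solving \eqref{eq.energydissipidentitylimitpparam} for $\calI(\hat z(s))$ and estimating the Kurzweil integral via \eqref{est:kurz01}, rather than splitting into lower and upper semicontinuity, but the outcome is the same; and it gets \eqref{eq.unif-est1-paramsola} from the coercivity estimate \eqref{eq.basic1} and \eqref{est:kurz01} via a simple quadratic inequality rather than a Gronwall step).

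The serious gap is in the refined estimate \eqref{eq.unif-est1-paramsolc}. You correctly identify that re-running the discrete argument of Proposition~\ref{prop.BV-estimate} at the continuous level via difference quotients of \eqref{eq.diffincllambda} would require a priori regularity of $\lambda\bbV\hat z'$ that is exactly what one wants to prove, and you offer two ways out: either work ``only at Lebesgue points'', or ``transfer the discrete bound through the construction of Theorem~\ref{ex.ri.pparam}''. Neither closes the gap. The second is unavailable because Lemma~\ref{lem.reg.paramsolv1} is a statement about \emph{every} element of $\calL(\ell,z_0)$ (cf.\ Definition~\ref{def.pparamsol-version2}), not only those arising as vanishing-viscosity limits; and the first is not a proof. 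The paper sidesteps this entirely by a rescaling argument: on a maximal component $(a,b)\subset G$ one sets $\Lambda(s)=\int_{a}^s\lambda(r)^{-1}\dr$ (or starts from a sequence $s_n\searrow a$ where $\lambda(s_n)\le 1/n$ in the nonintegrable case), turns \eqref{eq.diffincllambda} into the inclusion $0\in\partial\calR(\wt z')+\bbV\wt z'+\rmD\wt\calE(r,\wt z)$ with viscosity $1$, and invokes the already-established Proposition~\ref{prop.unifboundeps} together with Remark~\ref{rem.viscindepT} (scaling-invariance of the constants). That way no differentiation of $\lambda\bbV\hat z'$ is ever needed.

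A second, smaller error: your claim that the left-endpoint value $\dist_\bbV(-\rmD\hat\calE(a,\hat z(a)),\partial\calR(0))$ vanishes on every component whose left endpoint $a$ is not $0$ is wrong. At $a\in\partial G$ one only knows $\vvm(\hat\ell(a),\hat z(a))=0$, i.e.\ that the \emph{minimum} over $\{\hat\ell(a),\hat\ell(a\pm)\}$ of $\vve(\cdot,\hat z(a))$ vanishes, which is in general not $\vve(\hat\ell(a),\hat z(a))$ itself. The paper handles this correctly in its case (b): there exists $\ell_*\in\{\hat\ell(a),\hat\ell(a\pm)\}$ with $-\rmD\calI(\hat z(a))+\ell_*\in\partial\calR(0)$, whence $\dist_\bbV(-\rmD\hat\calE(a,\hat z(a)),\partial\calR(0))\le\|\hat\ell(a)-\ell_*\|_{\bbV^*}\le c_\bbV\big(\|\ell\|_{L^\infty(0,T;\calV^*)}+\Var_{\calV^*}(\ell,[0,T])\big)$. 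This is bounded, not zero, and it is the bound one must propagate along each component (there is no ``summing over components'': \eqref{eq.unif-est1-paramsolc} is an $L^\infty$ bound, so one only needs a uniform per-component estimate).
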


\begin{proof}
Continuity of $\calI(\hat z(\cdot))$ (claim (1)): 
The energy dissipation identity \eqref{eq.energydissipidentitylimitpparam} 
and the normalization property \eqref{eq.normalized.sol} imply that for all 
$a, b\in (0,S)$ we have 
\begin{align*}
 \abs{\calI(\hat z(b)) -\calI(\hat z(a)) }\leq \abs{b-a} + \abs{\hat t(b)-\hat 
t(a)} + 
\abs{\int_a^b\langle\hat\ell(s),\rmd\hat z(s)\rangle}.
\end{align*}
Since $\hat z\in C([0,S];\calV)$ (cf.\ Proposition 
\ref{prop.hellyarzasc-version2}) and taking into account 
estimate \eqref{est:kurz01},  the latter integral can be estimated as 
\begin{multline*}
\abs{\int_a^b \langle\hat\ell(s),\rmd\hat z(s)\rangle} 
=\abs{ \int_a^b 
\langle\hat\ell(s),\rmd(\hat z(s)-\hat z(a))\rangle
}
\\
\leq (\bnorm{\hat\ell}_{L^\infty((0,S);\calV^*} + \Var_{\calV^*}(\hat 
\ell,[0,S]))\norm{\hat z(\cdot) - \hat z(a)}_{C([a,b];\calV)}=:f(b).
\end{multline*}
Since $\lim_{b\to a}f(b)=0$, the  continuity of the 
mapping 
$s\mapsto\calI(\hat z(s))$ ensues.
%
%

Proof of claim (2): Since $m(\hat\ell(s),\hat 
z(s))>0$  on $G$, from the complementarity 
condition \eqref{eq.complementarity} we deduce that $\hat t$ is constant on 
each connected component of $G$. 
In order to verify \eqref{eq.diffincllambda}, let $[a,b]\Subset G$. Since by 
assumption $\hat z\in W^{1,1}((a,b);\calV)$, the identities $\calR[\hat 
z'](s)=\calR(\hat z'(s))$ and $\int_\alpha^\beta\langle \hat \ell(s),\rmd\hat 
z(s)\rangle =\int_\alpha^\beta \langle\hat\ell(s),\hat z'(s)\rangle\ds$ are 
valid for almost all $s\in (a,b)$ and all $\alpha<\beta\in (a,b)$, cf.\  
 \cite[Proposition 1.10]{KrejciLiero09}. 
Thus, localizing the energy dissipation identity 
\eqref{eq.energydissipidentitylimitpparam} 
(we apply the integrated version of the  chain rule 
\eqref{eq.int.chain.rule} and exploit the continuity of $\calI(\hat z(\cdot))$ 
provided in the first part of the proposition) yields
\begin{align}
\label{eq.locengdissipid}
\calR(\hat z'(s)) + \langle\rmD\calI(\hat z(s))-\hat\ell(s),\hat 
z'(s)\rangle_{\calV^*,\calV} + \norm{\hat z'(s)}_\bbV\dist_\bbV(-\rmD\hat 
\calE(s,\hat z(s)),\partial\calR(0)) =0
\end{align}
which is valid for almost all $s\in (a,b)$. Since $\hat t$ is constant 
on $(a,b)$, from  \eqref{eq.normalized.sol} it follows that $\hat z'(s)\neq 0$ 
almost everywhere on $(a,b)$. Hence, with 
\[
\lambda(s):= 
\begin{cases}
\dist_\bbV(-\rmD\hat\calE(s,\hat z(s)),\partial\calR(0))/\norm{\hat 
z'(s)}_\bbV&\text{if }\hat z'(s)\neq 0\\
0&\text{otherwise}
\end{cases}
\]
we have $ \norm{\hat z'(s)}_\bbV\dist_\bbV(-\rmD\hat 
\calE(s,\hat z(s)),\partial\calR(0)) =\langle \lambda(s)\bbV\hat z'(s),\hat 
z'(s)\rangle$ and \eqref{eq.diffincllambda} follows from 
\eqref{eq.locengdissipid} and the one-homogeneity of $\calR$. This finishes the 
proof of claim (2) in Lemma~\ref{lem.reg.paramsolv1}.

Proof of the estimates (claim (3)): 
The verification of 
\eqref{eq.unif-est1-paramsola}--\eqref{eq.unif-est1-paramsolb} starts from the  
energy 
dissipation identity
 \eqref{eq.energydissipidentitylimitpparam}.  
Indeed, for all $b\in [0,S]$ we deduce relying on 
the coercivity estimate for $\calI$ and on \cite[Theorem 1.9]{KrejciLiero09} 
(cf.\ \eqref{est:kurz01} in the Appendix)
\begin{align*}
 \tfrac{\alpha}{2}\norm{\hat z(b)}^2_\calZ\leq \calI( z_0) + 
(\norm{\ell}_{L^\infty(0,T;\calV^*)}+\Var_{\bbV^*}(\ell,[0,T])\norm{\hat 
z}_{L^\infty(0,S;\calZ)}.
\end{align*}
Here, we also used that 
$\Var_{\bbV^*}(\ell,[0,T])=\Var_{\bbV^*}(\hat\ell,[0,S])$. 
From this the claimed uniform bounds in 
\eqref{eq.unif-est1-paramsola}--\eqref{eq.unif-est1-paramsolb} are an 
immediate consequence taking into account the normalization condition 
\eqref{eq.normalized.sol}.

Let us finally show the higher regularity of $\rmD\hat\calE(\cdot,\hat 
z(\cdot))$ along with estimate \eqref{eq.unif-est1-paramsolc}. 
Observe that  $m(\hat \ell(s),\hat z(s))=0$ for  all $s\in 
(0,S)\backslash 
G$. Since $\hat\ell(s),\hat \ell(s+),\hat\ell(s-)$ differ in at most countably 
many points, this implies that $\dist_\bbV(-\rmD\hat \calE(s,\hat 
z(s)),\partial\calR(0))=0$ almost everywhere on $(0,S)\backslash G$. Since 
$\partial\calR(0)$ is a bounded subset of $\calV^*$, for almost all $s\in 
[0,S]\backslash G$  we 
 obtain $\bnorm{\rmD\hat\calE(s,\hat z(s))}_{\calV^*}\leq 
\diam_{\calV^*}(\partial\calR(0))$, which is 
\eqref{eq.unif-est1-paramsolc} restricted to the set $(0,S)\backslash G$. 

The regularity and the estimate with respect to the set  $G$ will be 
deduced by a rescaling argument relying on the differential inclusion 
 \eqref{eq.diffincllambda}, Proposition \ref{prop.unifboundeps} and Remark 
\ref{rem.viscindepT}. 
Let $(a,b)\subset G$ be a nonempty maximal connected component of $G$. 
A proof by contradiction relying on the lower semi-continuity property of 
$\vvm (\cdot,\cdot)$ stated in Lemma \ref{lem:vvmlsc} shows that for every 
compact $K\Subset(a,b)$ there exists $c_K>0$ such that $\vvm(\hat\ell(s),\hat 
z(s))\geq c_K$ for all $s\in K$. From the normalization condition we thus 
obtain $\norm{\hat z'(s)}_\calV\leq c_K^{-1}$ almost everywhere on $K$ and 
hence $\lambda(s)\geq c_K^2>0$ on $K$.  Thus $\lambda^{-1}\in 
L^\infty_\text{loc}(a,b)$. 

We now distinguish two cases, namely case (a), where there exists $s_*\in 
(a,b)$ such that $\lambda^{-1}\notin L^1((a,s_*))$ and the simpler case (b), 
where we assume that for all $s_*\in (a,b)$ the function $\lambda^{-1}$ 
belongs to $L^1((a,s_*))$. 

Case (a): Assume that $\lambda^{-1}\notin L^1((a,s_*))$. 
Since $\lambda^{-1}\in 
L^\infty_\text{loc}(a,b)$,  
for every $\varepsilon>0$ there exists 
$c_\varepsilon>0$ such that  
$\lambda^{-1}\big|_{(a+\varepsilon,s_*)}\leq  c_\varepsilon$. Since 
by assumption $\lambda^{-1}$ is not integrable on $(a,s_*)$, $\lambda^{-1}$ 
is   unbounded towards the point $a$. To be more precise, for every $n\in \N$ 
the set 
\begin{align*}
S_n:= \Set{s\in (a,a +\tfrac{1}{n})}{\tfrac{1}{\lambda(s)}\geq n}
=\Set{s\in (a,a +\tfrac{1}{n})}{\lambda(s)\leq \tfrac{1}{n}}
\end{align*}
has positive Lebesgue measure. 
Moreover, taking into account the normalization property 
\eqref{eq.normalized.sol} and the structure of $\lambda$, we deduce
\begin{align}
\label{eq.prooflinfbound1}
 \text{for all $n\in \N$ and almost all $s\in S_n$}\quad 
 \dist_\bbV(- \rmD\hat\calE(s,\hat z(s)),\partial\calR(0))\leq 
\tfrac{1}{\sqrt{n}}\,.
\end{align}
Let now $s_n\in S_n$ such that $\dist_\bbV(- \rmD\hat\calE(s_n,\hat 
z(s_n)),\partial\calR(0))\leq \tfrac{1}{\sqrt{n}}$. Without loss of generality  
we 
assume that the sequence $(s_n)_{n\in\N}$ is  decreasing and 
converging to $a$.  Observe that  $- \rmD\hat\calE(s_n,\hat 
z(s_n))\notin \partial\calR(0)$ 
 for all $n$ since $\vvm(\hat\ell(s_n),\hat 
 z(s_n))>0$ on $G$. 
Observe further that  $\hat z$ satisfies 
the following initial value problem with $z_{0,n}:=\hat z(s_n)$
\begin{gather*}
 0\in \partial\calR(\hat z'(s)) + \lambda(s)\bbV\hat z'(s) 
 +\rmD\hat \calE(s,\hat 
z(s)),\quad s\in (s_n,b),\\
\hat z(s_n)=z_{0,n},\quad \rmD\hat\calE(s_n,z_{0,n})\in 
\calV^*.
\end{gather*}
We next rescale this system as follows: For $s\in [s_n,b)$ let 
$\Lambda(s):=\int_{s_n}^s \tfrac{1}{\lambda(r)}\dr$. The above considerations 
show 
that $\Lambda$ is well defined for all $s\in [s_n,b)$. However, 
for $s\nearrow b$ one might have $\Lambda(s)\to \infty$.  Moreover, $\Lambda$ 
is strictly increasing, 
continuous and the inverse function 
$\sigma:=\Lambda^{-1}:[0,\Lambda(b))\to[s_n,b)$ exists. For $r\in 
[0,\Lambda(b))$ 
let $\wt z(r):=\hat z(\sigma(r))$, $\wt\ell(r)=\hat \ell(\sigma(r))$ 
and 
$\wt\calE(r,v)=\hat \calE(\sigma(r),v)=\calI(v) - 
\langle\wt\ell(r),v\rangle$. The function $\wt z$ solves the 
Cauchy problem
\begin{gather*}
 0\in \partial\calR(\wt z'(r)) + \bbV\wt z'(r) +\rmD\wt\calE(r,\wt  
z(r)),\quad r\in (0,\Lambda(b)),\\
\wt z(0)=z_{0,n},\quad \rmD\wt\calE(0,\wt z(0))\in \calV^*.  
\end{gather*}
Thus, Proposition \ref{prop.unifboundeps} and Remark \ref{rem.viscindepT} are  
applicable and imply in 
particular that $\rmD\calI(\wt z)\in L^\infty((0,\Lambda(b));\calV^*)$ with a 
bound that depends on $\norm{\wt z(0)}_\calZ$, $\Var_{\calV^*}(\wt 
\ell;[0,\Lambda(b)])$, 
$\bnorm{\wt\ell}_{L^\infty(0,\Lambda(b);\calV^*)}$ and 
$\dist_\bbV(-\rmD\wt\calE(0,\wt z(0)),\partial\calR(0))$,  
only. This immediately translates into $\rmD\calI(\hat 
z)\in L^\infty((s_n,b);\calV^*)$ with 
\begin{align*}
& \norm{\rmD\calI(\hat z)}_{L^\infty((s_n,b);\calV^*)}
\\
&\leq c\Big(\norm{\hat 
z(s_n)}_{\calZ} + \Var_{\calV^*}(\hat \ell,[s_n,b]) 
+\bnorm{\hat\ell}_{L^\infty(s_n,b;\calV^*)}
+\dist_\bbV(-\rmD\hat\calE(s_n,\hat z(s_n)),\partial\calR(0))
\Big)\\
&\leq c\big(\norm{\hat z}_{L^\infty((0,S);\calZ)} +\Var_{\calV^*}(\ell;[0,T]) +
\norm{\ell}_{L^\infty(0,T;\calV^*)} +  
\frac{1}{\sqrt{n}}\big),
\end{align*}
and the constant $c$ is independent of the chosen solution $\hat z$ and 
of $s_n$. For $n\to\infty$ we ultimately obtain $\rmD\calI(\hat z)\in 
L^\infty((a,b);\calV^*)$ with a bound that depends on the data $z_0,\ell$, only.

Case (b): Now we assume that $\lambda^{-1}\in L^1((a,s_*))$ for every $s_*\in 
(a,b)$. Since $G$ is open and since (by assumption) $(a,b)$ is a maximal 
connected component of 
$G$, we have $a\notin G$ and hence, $\vvm(\hat\ell(a),\hat z(a))=0$. 
As above, we  rescale the 
equation by applying  the following transformation:  Let 
$\Lambda(s):=\int_a^s\frac{1}{\lambda(r)}\dr$ and 
 $\sigma:=\Lambda^{-1}$ its inverse function. For $r\in 
(0,\Lambda(b))$ we define $\wt z(r):=\hat z(\sigma(r))$ and 
$\wt\ell(r):=\hat\ell(\sigma(r))$. 
The function $\wt z$ satisfies the 
initial value problem 
\begin{align*}
\wt z(0)=\hat z(a),\quad 0\in\partial\calR(\wt z'(r)) + \bbV\wt z'(r) + 
\rmD\calI(\wt  
z(r)) -\wt\ell(r) \text{ for a.a.\ } r\in (0,\Lambda(b))
\end{align*}
with $\rmD\calI(\wt z(0))-\wt\ell(0)\in \calV^*$.  
By Proposition \ref{prop.unifboundeps} we have   
 $\rmD\calI(\wt z)\in L^\infty((0,\Lambda(b));\calV^*)$ with a 
bound depending only on $\norm{\hat z(a)}_\calZ$,  on 
$\dist_\bbV(-\rmD\hat\calE(a,\hat z(a)),\partial\calR(0))$ 
and on $\Var_{\calV^*}(\wt 
\ell,[0,\Lambda(b)])$. This immediately carries over to  
$\rmD\calI(\hat z)\in L^\infty(a,b;\calV^*)$ with the same bound. 
Observe that there exists $\ell_*\in \{\hat\ell(a),\hat \ell(a+),\hat 
\ell(a-)\}$ with $-\rmD\calI(\hat z(a))+\ell_*\in \partial\calR(0)$. Hence, 
\begin{multline*}
\dist_\bbV(-\rmD\hat\calE(a,\hat z(a)),\partial\calR(0)) 
\\
\leq \bnorm{-\rmD\hat\calE(a,\hat z(a)) + (\rmD\calI(\hat z(a)) - 
\ell_*)}_{\bbV^*}
\leq c_\bbV\big(
\norm{\ell}_{L^\infty(0,T;\calV^*)} + \Var_{\calV^*}(\ell,[0,T])
\big).
\end{multline*}

Combining the estimates derived for the cases (a) and (b) with the estimate 
derived for $(0,S)\backslash G$ we 
ultimately arrive at \eqref{eq.unif-est1-paramsolc}. 
Now, \eqref{eq.unif-est1-paramsold} is an immediate consequence of 
\eqref{eq.diffincllambda} and the estimate \eqref{eq.unif-est1-paramsolc}.

Finally, thanks to Proposition \ref{prop.hellyarzasc-version2}, $\hat z\in 
C_\text{weak}([0,S];\calZ)$, and hence, $\rmD\calI(\hat z(\cdot)) 
\in C_\text{weak}([0,S];\calZ^*)$ (by assumption 
\eqref{ass.fweakconv}). Together with the uniform bound of $\rmD\calI(\hat 
z(\cdot))$ in $\calV^*$ the last assertion of claim (3) follows. 
\end{proof}

\begin{remark}
Let $(S,\hat t,\hat z,\hat \ell)$ be a solution associated with 
$(\calI,\calR,\ell,z_0)$ in the sense of Definition 
\ref{def.pparamsol-version2}. Let $\hat \ell_\pm$ be the left resp.\ the right 
continuous version of $\hat \ell$. Then $(S,\hat t,\hat\ell_\pm,\hat z)$ is 
 a solution associated with $(\calI,\calR,\ell,z_0)$  in the sense of 
Definition \ref{def.pparamsol-version2}, as well. 

This can be seen as follows:  $\hat \ell$ and its left or right 
continuous version differ in at most countably many points. 
Thus, \eqref{eq.pparamsol-con1}--\eqref{eq.complementarity} are valid after 
replacing $\hat \ell$ with $\hat\ell_\pm$.  
Let $G_\pm:=\Set{s\in [0,S]}{\vvm(\hat \ell_\pm(s),\hat z(s))>0}$. Clearly, 
$G\subseteq G_\pm$ and the sets differ on a set of measure zero, only. 
Since 
$\hat z\in C([0,S];\calV)$ (cf.\ Proposition \ref{prop.hellyarzasc-version2}), 
for every $s\in [0,S]$ we have $\int_0^s\langle\hat\ell(r),\rmd\hat 
z(r)\rangle_{\calV^*,\calV} = \int_0^s\langle\hat\ell_\pm(r),\rmd\hat 
z(r)\rangle_{\calV^*,\calV}$. This is due to the identity 
$\int_a^b\langle\chi_{s_*}(r),\rmd g(r)\rangle_{\calV^*,\calV} = g(s_* 
+)-g(s_*-)$ that is valid for every $s_*\in [a,b]$ and every regulated function 
$g\in G([a,b];\calV)$, \cite[Proposition 2.1]{tvrdy89}. Here, $\chi_{s_*}(s)= 
0$ if $s\neq s_*$ and  $\chi_{s_*}(s_*)=1$. Hence, the energy dissipation 
identity \eqref{eq.energydissipidentitylimitpparam} remains unaffected by a 
switch from $\hat \ell$ to $\hat\ell_\pm$. 

As a consequence of the weak continuity of $\rmD\calI(\hat z(\cdot))$ 
in $\calV^*$ (see Lemma \ref{lem.reg.paramsolv1}) with the same arguments as in 
the proof of Theorem \ref{ex.ri.pparam} it follows that $G_\pm$ is 
open. Thus, $\rmD\calI(\hat z(\cdot))\in L^\infty_\text{loc}(G_\pm;\calV^*)$. 
Moreover, condition   
\eqref{eq.normalized.sol} holds  with $G_\pm$ instead of $G$. 
It remains to show that $\hat z\in W^{1,1}_\text{loc}(G_\pm;\calV)$.  
Let $K\Subset G_\pm $ be compact. Then, again by lower semicontinuity, 
$\inf_{s\in K}\vvm(\hat\ell_\pm(s),\hat z(s))=:c>0$ which in turn implies 
(using the normalization property \eqref{eq.normalized.sol}) that $\norm{\hat 
z'(s)}_{\calV}\leq c$ a.e.\ on $K$. Since $z\in W^{1,1}_\text{loc}(G;\calV)$  
this implies  $\hat z\in W^{1,\infty}(K\cap G;\calV)$ and thus ultimately $\hat 
z\in W^{1,1}_\text{loc}(G_\pm;\calV)$. 
\end{remark}

\begin{proposition}
 \label{prop.properties_sol_set} 
Assume \eqref{eq.assumptionvv1}. The set $\calL(\ell,z_0)$ is compact in the 
following sense: For every sequence $(S_n,\hat t_n,\hat z_n,\hat\ell_n)_{n\in 
\N} \subset \calL(\ell,z_0)$ there exists a (not relabeled) subsequence and a 
tuple $(S,\hat t,\hat z,\hat \ell)\in \calL(\ell,z_0)$ such that 
\begin{align}
S_n\to S,\quad \hat t_n&\overset{*}{\rightharpoonup}\hat t 
\text{ weakly$*$ in }W^{1,\infty}(0,S),\,\, \hat t(S)=T,
\label{est.convparam6}
\\
\label{est.convparam7}
\hat z_n& \rightharpoonup\hat z \text{ weakly$*$ in 
}L^\infty(0,S;\calZ) \text{ and uniformly in }C([0,S];\calV), 
\\
 \hat\ell_n&\overset{*}{\rightharpoonup}\hat\ell,\,\, 
 \quad \rmD\calI(\hat 
z_n)\overset{*}{\rightharpoonup}\rmD\calI(\hat z) 
\text{ weakly$*$ in }L^\infty(0,S;\calV^*),  
\label{est.convparam8}
\end{align}
and for every $s\in [0,S]$
\begin{align}
 \hat t_n(s)&\to\hat t(s), 
 \quad 
 \hat z_n(s)\rightharpoonup\hat z(s) \text{ weakly in $\calZ$ }, 
 \label{est.convparam9}\\
 \rmD\calI(\hat z_n(s))&\rightharpoonup\rmD\calI(\hat z(s)) \text{ 
weakly in $\calV^*$}, \quad 
\hat\ell_n(s)\rightharpoonup\hat\ell(s) \text{ weakly in }\calV^*. 
\label{est.convparam10}
\end{align} 
 
\end{proposition}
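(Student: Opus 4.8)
The plan is to repeat, almost verbatim, the arguments of Proposition~\ref{prop.param-conv} and Theorem~\ref{ex.ri.pparam}, with the sequence $(S_n,\hat t_n,\hat z_n,\hat\ell_n)$ of solutions replacing the family of viscous reparameterizations $(S_\varepsilon,\hat t_\varepsilon,\hat z_\varepsilon,\hat\ell_\varepsilon)$, and with the uniform a priori estimates now supplied by Lemma~\ref{lem.reg.paramsolv1} in place of Propositions~\ref{prop.unifboundeps}--\ref{prop.unifboundeps-param}. Indeed, by Lemma~\ref{lem.reg.paramsolv1} the quantities $S_n$, $\norm{\hat z_n}_{L^\infty(0,S_n;\calZ)}$ and $\norm{\rmD\calI(\hat z_n)}_{L^\infty(0,S_n;\calV^*)}$ are bounded uniformly in $n$; moreover $\norm{\hat t_n'}_{L^\infty(0,S_n)}\le 1$ and, by the normalization \eqref{eq.normalized.sol} together with \eqref{eq.Mief100}, $\norm{\hat z_n'}_{L^\infty(0,S_n;\calX)}\le 1/c$, while $\Var_{\calV^*}(\hat\ell_n,[0,S_n])=\Var_{\calV^*}(\ell,[0,T])$. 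After extending each function by its value at $S_n$ beyond $S_n$ and passing to a subsequence with $S_n\to S$, Proposition~\ref{prop.hellyarzasc-version2} yields \eqref{est.convparam7} and \eqref{est.convparam9} together with $\hat z\in AC^\infty([0,S];\calX)\cap C([0,S];\calV)\cap C_\text{weak}([0,S];\calZ)$; the Banach space valued Helly selection principle \cite{BarbuPrecupanu86} applied to $(\hat\ell_n)_n$ gives \eqref{est.convparam8}, \eqref{est.convparam10} and $\hat\ell\in BV([0,S];\calV^*)$; a diagonal/Banach--Alaoglu argument gives $\hat t_n\overset{*}{\rightharpoonup}\hat t$ in $W^{1,\infty}$ with $\hat t_n(s)\to\hat t(s)$ for every $s$ and $\hat t(S)=T$; and the weak continuity \eqref{ass.fweakconv} of $\rmD\calI$ together with \eqref{est.convparam9} gives $\rmD\calI(\hat z_n(s))\rightharpoonup\rmD\calI(\hat z(s))$ in $\calV^*$ for every $s$. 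As in the proof of Theorem~\ref{ex.ri.pparam} one then obtains $\calI(\hat z)\in C([0,S])$ (arguing with the Kurzweil estimate \eqref{est:kurz01} exactly as in the proof of claim~(1) of Lemma~\ref{lem.reg.paramsolv1}) and $\rmD\calI(\hat z)\in C_\text{weak}([0,S];\calV^*)$.

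Since $\ell$ possesses strong one-sided limits in $\calV^*$, each $\hat\ell_n$ is of the form \eqref{eq.charhatell} relative to $\hat t_n$, and $\hat t_n(s)\to\hat t(s)$ for every $s$, the argument at the end of the proof of Proposition~\ref{prop.param-conv} carries over: for each $s$ the sequence $(\hat\ell_n(s))_n$ converges \emph{strongly} in $\calV^*$, the limit $\hat\ell(s)$ lies in $\{\ell(\hat t(s)),\ell(\hat t(s)+),\ell(\hat t(s)-)\}$, and on each level set $\hat t^{-1}(t_*)$ the $s_-=s_+$ contradiction goes through, where one now invokes the structure \eqref{eq.charhatell} of the individual $\hat\ell_n$ together with the monotonicity of $\hat t_n$ in place of the strict monotonicity used in Proposition~\ref{prop.param-conv}. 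Hence $\hat\ell$ is of the form \eqref{eq.charhatell} relative to $\hat t$.

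It remains to pass to the limit in the defining relations \eqref{eq.pparamsol-con1}--\eqref{eq.energydissipidentitylimitpparam}. Relation \eqref{eq.pparamsol-con1} is immediate from the above convergences. The set $G=\Set{s}{\vvm(\hat\ell(s),\hat z(s))>0}$ is open by Lemma~\ref{lem:vvmlsc}, exactly as in Theorem~\ref{ex.ri.pparam}; on every compact $K\Subset G$ one has $\vve(\hat\ell_n(s),\hat z_n(s))\ge c_K/2$ for all $s\in K$ and all large $n$ (weak lower semicontinuity of $\dist_\bbV$ applied to $-\rmD\calI(\hat z_n(s))+\hat\ell_n(s)\rightharpoonup-\rmD\calI(\hat z(s))+\hat\ell(s)$, together with a contradiction argument for the uniformity in $s$), so by \eqref{eq.normalized.sol} the family $(\hat z_n')_n$ is bounded in $L^\infty(K;\calV)$, whence $\hat z\in W^{1,\infty}_\text{loc}(G;\calV)$ and $\hat z_n\overset{*}{\rightharpoonup}\hat z$ in $W^{1,\infty}(K;\calV)$. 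The complementarity \eqref{eq.complementarity} is obtained by taking $\liminf_n$ in $0=\int_0^S(\hat t_n'(s))^2\,\vve(\hat\ell_n(s),\hat z_n(s))^2\ds$ (valid because the integrand vanishes a.e.\ for each $n$, since $\hat t_n'=0$ a.e.\ on $G_n$ and $\vve(\hat\ell_n,\hat z_n)=0$ a.e.\ on $[0,S]\backslash G_n$) with the help of Proposition~\ref{app_prop:lsc}, using $\hat t_n'\overset{*}{\rightharpoonup}\hat t'$ and $\liminf_n\vve(\hat\ell_n(s),\hat z_n(s))\ge\dist_\bbV(-\rmD\hat\calE(s,\hat z(s)),\partial\calR(0))$; in particular $\hat t'=0$ a.e.\ on $G$. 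For the energy--dissipation balance, the inequality ``$\le$'' in \eqref{eq.energydissipidentitylimitpparam} follows by taking $\liminf_n$ in \eqref{eq.energydissipidentitylimitpparam} written for $(S_n,\hat t_n,\hat z_n,\hat\ell_n)$, using weak lower semicontinuity of $\calI$ (its quadratic part is convex and $\calF$ is weak--weak continuous by \eqref{ass.fweakconv}), Helly's theorem \cite[Thm.~3.2]{MaMi05} and Lemma~\ref{app.lembv1} for the $\calR$-term, Proposition~\ref{app_prop:lsc} on compacts $K\Subset G$ for the viscous term, and Proposition~\ref{prop:kurz-conv} for the Kurzweil convergence of the power term; the reverse inequality ``$\ge$'' holds automatically for the limit tuple, since---as established in the proof of Theorem~\ref{ex.ri.pparam}---it is a consequence solely of \eqref{est.lambda-convex-energy}, of $\hat z\in AC^\infty([0,S];\calX)\cap C([0,S];\calV)$, of the continuity of $\calI(\hat z(\cdot))$, of $\rmD\hat\calE(\cdot,\hat z(\cdot))\in L^\infty(0,S;\calV^*)$ and of $\hat z\in W^{1,1}_\text{loc}(G;\calV)$. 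With \eqref{eq.energydissipidentitylimitpparam} now an equality both for each $n$ and for the limit, the energy term and the two dissipation terms on the left-hand side converge individually for each $s$; writing $\int_0^s\bigl(\calR(\hat z_n')+\norm{\hat z_n'}_\bbV\,\vve(\hat\ell_n,\hat z_n)\bigr)\dr=\int_0^s(1-\hat t_n'(r))\dr$ for a.e.\ $s$ and passing to the limit with $\hat t_n'\overset{*}{\rightharpoonup}\hat t'$ yields, after differentiation in $s$, the normalization \eqref{eq.normalized.sol}. This proves $(S,\hat t,\hat z,\hat\ell)\in\calL(\ell,z_0)$, and the convergences \eqref{est.convparam6}--\eqref{est.convparam10} are precisely those extracted above.

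The technical heart of the argument is the lower semicontinuity of the viscous dissipation term $\int_0^s\norm{\hat z_n'}_\bbV\,\vve(\hat\ell_n,\hat z_n)\dr$: since a priori $\hat z_n'$ is only bounded in $L^\infty(\calX)$ and not in $L^\infty(\calV)$, the passage to the limit cannot be performed globally and must be localized on compact subsets $K\Subset G$, which forces one first to establish the openness of $G$ and the uniform-in-$n$ lower bound on $\vve(\hat\ell_n,\hat z_n)$ over $K$. Closely related is the control of the limit load, where the precise jump structure \eqref{eq.charhatell} must be propagated through the passage to the limit even though $\hat t$ and $\hat t_n$ are merely non-decreasing rather than strictly increasing; here the a priori knowledge that each $\hat\ell_n$ already satisfies \eqref{eq.charhatell} is what makes the $s_-=s_+$ dichotomy work.
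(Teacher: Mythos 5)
Your proposal is correct and follows essentially the same route as the paper's own proof: extract compactness from the uniform bounds of Lemma \ref{lem.reg.paramsolv1}, invoke Proposition \ref{prop.hellyarzasc-version2} and Helly's theorem, transfer the characterization of $\hat\ell$ from Proposition \ref{prop.param-conv}, and then pass to the limit in \eqref{eq.pparamsol-con1}--\eqref{eq.energydissipidentitylimitpparam} exactly as in Theorem \ref{ex.ri.pparam}, using the localization on compacts $K\Subset G$ for the viscous dissipation term and the observation that the ``$\geq$'' inequality needs only the regularity of the limit tuple. The only cosmetic deviation is the treatment of the complementarity relation: you square and invoke Proposition \ref{app_prop:lsc}, whereas the paper applies Proposition \ref{app_prop:lsc2} directly to $\int_0^S \hat t_n'\,\vve(\hat\ell_n,\hat z_n)\ds=0$; both yield \eqref{eq.complementarity} by the same lower semicontinuity.
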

\begin{proof}
Let $(S_n,\hat t_n,\hat z_n,\hat\ell_n)_{n\in 
\N} \subset \calL(\ell,z_0)$ and let $(G_n)_n\subset[0,S]$ be the 
corresponding 
open sets according to Definition \ref{def.pparamsol-version2}.  
 Thanks to Lemma \ref{lem.reg.paramsolv1} the bounds 
\eqref{eq.unif-est1-paramsola}--\eqref{eq.unif-est1-paramsold} hold uniformly 
with respect to $n$ and $G_n$. Hence, up to a subsequence, $S_n\to S$ for 
some $S>0$. Again, if $S>S_n$ we  extend all functions by their constant value 
at $S_n$. Having in mind the normalization condition 
\eqref{eq.normalized.sol}, with Lemma \ref{prop.hellyarzasc-version2}, part 
(b), 
 there exists $\hat z\in AC^\infty([0,S];\calX)\cap 
L^\infty((0,S);\calZ)$, $\hat t\in W^{1,\infty}(0,S)$ and $\hat\ell\in 
BV([0,S];\calV^*)$  such that (up to extracting a further subsequence)  the 
convergences in \eqref{est.convparam6}--\eqref{est.convparam10} hold. Thereby, 
the convergences of the sequence $\hat \ell_n$ follows again from the Banach 
space valued version of Helly's selection principle \cite{BarbuPrecupanu86}, 
while the convergences of $\rmD\calI$ follow by the same arguments as in the 
proof of Proposition \ref{prop.param-conv}.  Moreover, again by the 
same arguments as in Proposition \ref{prop.param-conv} the continuity of   
$s\mapsto \calI(\hat z(s))$ ensues. Observe further that the 
function  $s\mapsto \rmD\calI(\hat z(s))$ belongs to 
$C_\text{weak}([0,S];\calV^*)$. 

The characterization of the limit function $\hat\ell$
follows by similar arguments as in the proof of Proposition 
\ref{prop.param-conv}. 
Indeed, since for the functions $\ell,\ell_-,\ell_+$ in each $t\in [0,T]$ the 
(strong) left and right limits exist and belong to $\{\ell_-(t),\ell_+(t)\}$ 
and since $\hat\ell_n(s)\in \{\ell(\hat t_n(s)), \ell_-(\hat 
t_n(s)),\ell_+(\hat t_n(s))\}$, the limit $\hat\ell(s)$ belongs to $\{\ell(\hat 
t(s)), \ell_-(\hat t(s)),\ell_+(\hat t(s))\}$ and we even have strong 
convergence $\hat \ell_n(s)\to\hat\ell(s)$ in $\calV^*$. 
If $t_*\in [0,T]$ is a point of continuity of  $\ell$, then from the above, for 
all $s\in \hat t^{-1}(t_*)$ we have 
$\hat\ell(s)=\ell(t_*)=\ell_-(t_*)=\ell_+(t_*)$. Assume now that $t_*$ is not a 
point of continuity of $\ell$ with $\ell_-(t_*)\neq \ell_+(t_*)$. Let $s\in 
[a,b]:=\hat t^{-1}(t_*)$ with $\hat\ell(s)=\ell_+(t_*)$.   A proof by 
contradiction shows that there exists $n_0\in \N$ such that for all $n\geq n_0$ 
we have $\hat t_n(s)\geq \hat t(s)=t_*$. Moreover, by monotonicity of the 
functions $\hat t_n$, for all $n\geq n_0$ and all $r\in[s,b]$ we have $\hat 
t_n(r)\geq t_*$. Hence, $\hat\ell_n(r)\to \ell_+(t_*)$, as well. Let 
$s_+:=\inf\Set{s\in [a,b]}{\hat\ell(s)=\ell_+(t_*)}$. In a similar way we 
define $s_-:=\sup \Set{s\in [a,b]}{\hat\ell(s)=\ell_-(t_*)}$ and obtain 
$\hat\ell_n(r)\to\ell_-(t_*)$ for all $r\in [a,s_-)$.  Thus we have shown that 
$\hat\ell(s)=\ell_-(s)$ if $s\in [a,s_-)$ and $\hat\ell(s)=\ell_+(s)$ if $s\in 
(s_+,b]$. Assume finally that $s_-<s_+$. Then $\hat\ell(s)=\ell(t_*)$ 
for all $s\in (s_-,s_+)$ and for each pair $s_1<s_2\in (s_-,s_+)$ there exists 
$n_0\in \N$ such that $\hat \ell_n(s_1)=\hat \ell_n(s_2)=\ell(t_*)$ for all 
$n\geq n_0$ (proof by contradiction). This implies in  particular that  $\hat 
t_n(s_1)=\hat t_n(s_2)=t_*$ for all $n\geq n_0$ and that $s_1=s_{*,n}$ and 
$s_2=s_{*,n}$ for all $n$ with $s_{*,n}$ from \eqref{eq.charhatell}. But this 
is 
a contradiction. Hence, $s_-=s_+$ in this case. For the case 
$\ell_-(t_*)=\ell_+(t_*)\neq \ell(t_*)$ the arguments can be easily adapted. 
To summarize, we finally have shown that 
$\hat \ell$ is of the structure \eqref{eq.charhatell}. 

It remains to prove that $(S,\hat t,\hat z,\hat\ell)\in \calL(\ell,z_0)$. Here, 
we follow mainly the proof of Theorem \ref{ex.ri.pparam}. 
Due to 
Proposition \ref{app_prop:lsc2} the complementarity relation 
\eqref{eq.complementarity} is satisfied by the limit tuple. 

Energy dissipation estimate \eqref{eq.energydissipidentitylimitpparam}, $\leq$: 
Starting from \eqref{eq.energydissipidentitylimitpparam} written for every $n$, 
by lower semicontinuity, the Helly convergence Theorem 
\cite[Theorem 3.2]{MaMi05}, Lemma~\ref{app.lembv1}, and Proposition 
\ref{prop:kurz-conv}, we obtain 
$\liminf_n\calI(\hat z_n(s))\geq \calI(\hat z(s))$, $\liminf_n 
\int_0^s\calR[\hat z_n'](r)\dr\geq \int_0^s\calR[\hat z'](r)\dr$ and 
$\int_0^s\langle\hat\ell_n(r),\rmd\hat z_n(r)\rangle \to 
\int_0^s\langle\hat\ell(r),\rmd\hat z(r)\rangle$.  
 
Let $G:=\Set{s\in [0,S]}{\vvm(\hat\ell(s),\hat z(s))>0}$. Like in the proof of 
Theorem \ref{ex.ri.pparam} it follows that $G$ is open with $0\notin G$. Let 
$K\subset G$ be compact. With the very same arguments as in the proof of 
Theorem \ref{ex.ri.pparam} there exists $n_0\in \N$ such that for all $n\geq 
n_0$ we have $K\subset G_n$ and $\sup_{n\geq n_0}\norm{\hat 
z'_n}_{L^\infty(K;\calV)}<\infty$.  
Hence, each subsequence of $(\hat z_n)_n$ 
contains a subsubsequence that converges  weakly$*$ in $W^{1,\infty}(K;\calV)$ 
to $\hat z$, whence $\hat z\in W^{1,\infty}_\text{loc}(G;\calV)$ and  in fact 
the 
whole sequence converges. By Proposition~\ref{app_prop:lsc} we therefore have 
the analogue to \eqref{est.proof-ex01}. 
In summary, we have proved 
\eqref{eq.energydissipidentitylimitpparam} with $\leq $ instead of equality. 
By similar arguments we obtain 
\eqref{eq.normalized.sol} with $\geq$ instead of equality. The very same 
arguments as in the proof of Theorem \ref{ex.ri.pparam} yield the opposite 
estimate in \eqref{eq.energydissipidentitylimitpparam} as well as the 
normalization condition \eqref{eq.normalized.sol}. Hence, in summary the limit 
tuple $(S,\hat t,\hat z,\hat \ell)$ belongs to the solution set 
$\calL(\ell,z_0)$.  
\end{proof}

\begin{appendix}

\section{Properties of $\calR$}
\label{app.R}
We collect here the properties of the dissipation $\calR:\calX\to[0,\infty)$ 
and related quantities which are used throughout the paper. 
Since $\calR$ is positively one-homogeneous functional, it holds 
\[
\eta\in \partial\calR(v)\quad\Leftrightarrow\quad
\begin{cases}
 \langle \eta,v\rangle =\calR(v)&\\
 \langle \eta, w\rangle\leq \calR(w)&\text{for all }w\in \calZ\,.
\end{cases}
\] 
It follows from \eqref{eq.Mief100} that
$\partial\calR(0)\subset\calV^*$ and bounded in $\calV^*$-norm (see for 
instance 
\cite{Knees2018}).

For $\varepsilon>0$, let $\calR_\varepsilon:\calV\to[0,+\infty)$, 
$\calR_\varepsilon(v):=\calR(v)+\frac{\varepsilon}{2}\langle \bbV v,v\rangle$ 
be 
the viscous regularized dissipation potential. 
Its Fenchel-Moreau conjugate with respect to $\calV-\calV^*$ duality, 
$\calR_\varepsilon^*:\calV^*\to[0,+\infty)$, is defined by 
$\calR_\varepsilon^*(\eta)=\sup\{\langle\eta,v\rangle_{\calV^*,\calV}
-\calR_\varepsilon(v):  
\ v\in\calV\}$ and can be explicitly described by
\[
\calR_\varepsilon^*(\eta)=
\frac{1}{2 \varepsilon}\big(\dist_\bbV(\eta,\partial\calR(0))\big)^2\,.
\]
By $\dist_\bbV(\cdot,\partial\calR(0))$ we denote the distance of an element of 
$\calV^*$ to $\partial\calR(0)(\subset\calV^*)$ measured in the norm 
induced by the operator $\bbV$: for $\eta\in\calV^*$,
\begin{equation}
\label{distV}
   \dist_\bbV(\eta,\partial\calR(0)) := \inf \{\norm{\eta - \xi}_{\bbV^{*}}: \ 
\xi\in\partial\calR(0)\}  \,,
\end{equation}
where $\norm{\sigma}^2_{\bbV^{*}}=\langle \sigma,\bbV^{-1}\sigma\rangle$.

\section{Kurzweil integrals and convergence}
\label{sec.Kurzweil}
In this section we use the terminology from \cite{KrejciLiero09}.  Let $\calW$ 
be a Banach space and let $G([a,b];\calW)$ denote the space of regulated 
functions $f:[a,b]\to \calW$, i.e.\ the space of those functions for which 
there exist both one-sided limits $f(t+), f(t-)\in \calW$ in every $t\in 
[a,b]$, see  \cite{Dieu69,KrejciLiero09}. 
For functions $f:[a,b]\to\calW^*$ and $g:[a,b]\to\calW$ we denote with 
$\int_a^b\langle f(t),\rmd g(t)\rangle$ ($\langle\cdot,\cdot\rangle$ the dual 
pairing of $\calW$) the Kurzweil integral of $f$ with respect to $g$.  
According to \cite[Theorem 1.9]{KrejciLiero09} (see also 
\cite[Section 2]{tvrdy89}), the 
Kurzweil integral of $f$ with respect to $g$ exists provided that $f\in 
G([a,b];\calW^*)$ and $g\in BV([a,b];\calW)$ or vice versa, i.e.\ $f\in 
BV([a,b];\calW^*)$ and $g\in G([a,b];\calW)$. In both cases the following 
estimate is valid 
\begin{multline}
  \label{est:kurz01}
  \abs{\int_a^b \langle f(t),\rmd g(t)\rangle}
   \leq \min\left\{ 
 \norm{f}_{L^\infty(a,b;\calW^*)}\Var_\calW(g,[a,b]), \right. 
 \\
\left.  (\norm{f(a)}_{\calW^*} + \norm{f(b)}_{\calW^*} + 
\Var_{\calW^*}(f,[a,b])) 
  \norm{g}_{L^\infty(a,b;\calW)}
\right\}\,.
\end{multline}

\begin{proposition}
\label{prop:kurz-conv}
For $n\in \N$ let $z, z_n\in C([a,b];\calW)$, $\ell,\ell_n\in 
BV([a,b];\calW^*)$ and assume that  $(z_n)_n$ converges uniformly to 
$z$. Assume further that 
\[
\sup_{n\in \N}\left(\norm{\ell_n}_{L^\infty((a,b),\calW^*)} + 
\Var_{\calW^*}(\ell_n,[a,b])\right)=:C<\infty
\] 
and 
that $\ell_n(t)\rightharpoonup \ell(t)$ weakly$*$ in $\calW^*$ for every $t\in 
[a,b]$. 
Then $\int_a^b\langle \ell_n(t),\rmd z_n(t)\rangle \to \int_a^b\langle \ell(t), 
\rmd z(t)\rangle$.  
\end{proposition}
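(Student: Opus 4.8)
The plan is to split the difference $\int_a^b\langle \ell_n,\rmd z_n\rangle - \int_a^b\langle \ell,\rmd z\rangle$ into two pieces, handling separately the change in the integrand ($\ell_n\to\ell$) and the change in the integrator ($z_n\to z$), i.e.
\[
 \int_a^b\langle \ell_n,\rmd z_n\rangle - \int_a^b\langle \ell,\rmd z\rangle
 = \int_a^b\langle \ell_n,\rmd(z_n - z)\rangle + \int_a^b\langle \ell_n - \ell,\rmd z\rangle.
\]
Both Kurzweil integrals on the right exist: in the first one $\ell_n\in BV([a,b];\calW^*)$ and $z_n-z\in C([a,b];\calW)\subset G([a,b];\calW)$; in the second one $\ell_n-\ell\in BV([a,b];\calW^*)$ and $z\in C([a,b];\calW)$. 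The first term is the routine one: by the estimate \eqref{est:kurz01} (second bound) and the uniform bound on $\norm{\ell_n}_{L^\infty}+\Var_{\calW^*}(\ell_n,[a,b])$ we get
\[
 \Abs{\int_a^b\langle \ell_n,\rmd(z_n-z)\rangle}\leq 3C\,\norm{z_n - z}_{C([a,b];\calW)}\to 0
\]
as $n\to\infty$, using uniform convergence $z_n\to z$.

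The main obstacle is the second term $\int_a^b\langle \ell_n - \ell,\rmd z\rangle$, where I cannot use \eqref{est:kurz01} directly since $\ell_n-\ell$ need not be small in any norm: only weak$^*$ convergence is assumed. The plan here is to use a Riemann-sum / Moore--Pollard-type characterization of the Kurzweil integral together with a density argument. First I would approximate $z$ by a piecewise-constant (step) function $z^{(m)}$ with $\norm{z - z^{(m)}}_{C([a,b];\calW)}<1/m$, which is possible since $z$ is continuous hence regulated. For a step integrator the Kurzweil integral reduces to a finite sum: $\int_a^b\langle f,\rmd z^{(m)}\rangle = \sum_j \langle f(\xi_j), z^{(m)}(t_j^+) - z^{(m)}(t_j^-)\rangle$-type expression evaluated at the finitely many jump points of $z^{(m)}$ (using the identity $\int_a^b\langle\chi_{s_*},\rmd g\rangle = g(s_*+)-g(s_*-)$ quoted in the paper, plus bilinearity). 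Hence
\[
 \int_a^b\langle \ell_n - \ell,\rmd z^{(m)}\rangle \longrightarrow 0\quad (n\to\infty)
\]
for each fixed $m$, because it is a finite linear combination of terms $\langle \ell_n(t) - \ell(t),\,w\rangle$ with $w\in\calW$ fixed, each of which tends to $0$ by the pointwise weak$^*$ convergence $\ell_n(t)\rightharpoonup\ell(t)$.

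Finally I would combine these with a uniformity estimate to let $m\to\infty$: writing
\[
 \int_a^b\langle \ell_n - \ell,\rmd z\rangle
 = \int_a^b\langle \ell_n - \ell,\rmd(z - z^{(m)})\rangle + \int_a^b\langle \ell_n - \ell,\rmd z^{(m)}\rangle,
\]
the first term is bounded via \eqref{est:kurz01} by $\bigl(\norm{\ell_n-\ell}_{L^\infty}+\Var_{\calW^*}(\ell_n-\ell,[a,b])\bigr)\norm{z - z^{(m)}}_{C([a,b];\calW)}\leq (2C + \Var_{\calW^*}(\ell,[a,b]))/m$, uniformly in $n$ (here $\norm{\ell}_{L^\infty}+\Var_{\calW^*}(\ell,[a,b])<\infty$ follows from weak$^*$ lower semicontinuity of the norm and of the variation applied to $\ell_n\to\ell$, against the uniform bound $C$). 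Thus, given $\eta>0$, first choose $m$ so large that this term is $<\eta/2$ for all $n$, then choose $n_0$ so that $\Abs{\int_a^b\langle \ell_n - \ell,\rmd z^{(m)}\rangle}<\eta/2$ for $n\geq n_0$; together with the first-term estimate this gives $\Abs{\int_a^b\langle \ell_n,\rmd z_n\rangle - \int_a^b\langle \ell,\rmd z\rangle}\to 0$. The one point requiring a little care is the reduction of the Kurzweil integral against a step function to a finite sum and the legitimacy of the splitting into "jump" contributions; this is standard (see \cite[Proposition 2.1]{tvrdy89} and \cite[Theorem 1.9]{KrejciLiero09}) and I would invoke it rather than reprove it.
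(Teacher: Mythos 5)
Your argument is correct, but it takes a genuinely different route from the paper's. The paper handles the whole convergence by a density-in-$C^1$ argument: it first proves $\int_a^b\langle\ell_n,\rmd z\rangle\to\int_a^b\langle\ell,\rmd z\rangle$ for fixed $z\in C^1([a,b];\calW)$ by converting the Kurzweil integral into a Lebesgue integral $(L)\int_a^b\langle\ell_n,\dot z\rangle\dt$ via \cite[Prop.~1.10]{KrejciLiero09} and invoking dominated convergence (uniform boundedness plus pointwise weak$^*$ convergence of $\ell_n(t)$), then extends to $z\in C([a,b];\calW)$ by the same $\varepsilon/3$-argument you use with the estimate \eqref{est:kurz01}, and finally replaces $z$ by $z_n$ exactly as in your first (``routine'') term. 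You instead approximate $z$ by step functions, observe that the Kurzweil integral with a step-function integrator collapses to a finite sum of terms of the form $\langle\ell_n(\tau_j)-\ell(\tau_j),w_j\rangle$ so that pointwise weak$^*$ convergence of $\ell_n$ alone kills each term, and then use \eqref{est:kurz01} to control the approximation error uniformly in $n$. Both are three-step density arguments of the same shape; what the paper's $C^1$-route buys is that the pointwise-convergence-under-the-integral step is handled by standard dominated convergence in the Lebesgue integral, with no need to unravel the Kurzweil sum for a discontinuous integrator. What your step-function route buys is that it is self-contained within the Stieltjes framework and does not invoke the Kurzweil--Lebesgue identity for absolutely continuous integrators; on the other hand it does put the burden on the (true, but delicate) fact that for a step integrator the Kurzweil integral depends only on the pointwise values $\ell_n(\tau_j)$ at the jump points, which is precisely the Kurzweil-tagging subtlety that you flag as ``requiring a little care'' and which the paper's route avoids entirely. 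One small bookkeeping note: in your final uniform bound the constant should be something like $2C$ rather than $2C+\Var_{\calW^*}(\ell,[a,b])$, since the lower-semicontinuity argument already gives $\norm{\ell}_{L^\infty}+\Var_{\calW^*}(\ell,[a,b])\leq C$; this does not affect the conclusion.
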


\begin{proof}
Let $(\ell_n)_n,\ell$ be given according to Proposition \ref{prop:kurz-conv}. 
Observe  that by lower semicontinuity we obtain 
$\norm{\ell}_{L^\infty((a,b),\calW^*)} + 
\Var_{\calW^*}(\ell,[a,b])\leq C$. 
 Assume first that $z\in C^1([a,b];\calW)$.  By 
\cite[Prop.\ 1.10]{KrejciLiero09}, we have 
$\int_a^b\langle \ell_n(t),\rmd z(t)\rangle = (L)\int_a^b\langle \ell_n(t),\dot 
z(t)\rangle\dt$, where the right hand side denotes the Lebesgue integral. Due 
to the assumptions, the integrand converges pointwise for every $t$ and is 
uniformly bounded with respect to $t$ and $n$. Hence, by Lebesgue's Theorem 
 we have $(L)\int_a^b\langle \ell_n,\dot z\rangle\dt \to 
(L)\int_a^b\langle \ell,\dot 
z\rangle\dt=\int_a^b\langle \ell(t),\rmd z(t)\rangle$. 

Since $C^1([a,b];\calW)$ is dense in $C([a,b];\calW)$ with respect to the 
sup norm, this convergence carries over to the case 
$z\in C([a,b];\calW)$ in the usual way.  Indeed, let $z\in C([a,b];\calW)$ and 
choose $\varepsilon>0$ arbitrarily. Let $\wt z\in C^1([a,b];\calW)$ with 
$\norm{z-\wt z}_{L^\infty((a,b);\calW)}\leq \varepsilon/3$. Let 
$n_\varepsilon\in \N$ such that  we have 
$\abs{\int_a^b\langle \ell_n,\rmd \wt z\rangle - \int_a^b\langle \ell,\rmd \wt 
z\rangle}\leq \varepsilon C/3$ for all $n\geq n_\varepsilon$. By 
\eqref{est:kurz01}, for all $n\geq 
n_\varepsilon$ it follows 
\begin{multline*}
 \abs{\int_a^b\langle \ell_n,\rmd(\wt z -z)\rangle}
 \leq 
(\norm{\ell_n(a)}_{\calW^*} +\norm{\ell_n(b)}_{\calW^*} + 
\Var_{\calW^*}(\ell_n,[a,b]))\norm{\wt z-z}_{L^\infty(a,b;\calW)}
\leq \tfrac{C \varepsilon}{3},
\end{multline*}
and similar for $\ell$ instead of $\ell_n$. Thus, 
\begin{align*}
 \abs{\int_a^b \langle \ell_n - \ell,\rmd z\rangle}\leq 
 \abs{\int_a^b \langle \ell_n ,\rmd (z-\wt z)\rangle}
 +  \abs{\int_a^b \langle \ell_n -\ell ,\rmd \wt z\rangle}
 +  \abs{\int_a^b \langle \ell ,\rmd (z-\wt z)\rangle}
 \leq C\varepsilon\,,
\end{align*}
which proves the convergence with $z\in C([a,b],\calW)$ fixed. 
Finally, the very same argument provides the general statement of Proposition 
\ref{prop:kurz-conv}.
\end{proof}
\begin{lemma}
 \label{lem:kurz-convdiffquot}
 Let $g\in C([a,b];\calW)$ and $f\in BV([0,S];\calW^*)$. Then 
 \begin{align}
\label{eq.kurz-convdiffquot}
 \lim_{h\searrow 0} \int_a^{b-h}\langle f(s), 
h^{-1}(g(s+h)-g(s))\rangle
\ds
= \int_a^b\langle f(s),\rmd 
g(s)\rangle
\,.
 \end{align}
\end{lemma}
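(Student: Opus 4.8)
The plan is to mimic the two-step strategy used for Proposition~\ref{prop:kurz-conv}: prove \eqref{eq.kurz-convdiffquot} first for $g\in C^1([a,b];\calW)$, and then reach a general continuous $g$ by density, the glue being a bound for the difference-quotient functional that is uniform in $h$ and linear in $\norm{g}_{L^\infty(a,b;\calW)}$. Throughout I abbreviate $I_h(g):=\int_a^{b-h}\langle f(s),h^{-1}(g(s+h)-g(s))\rangle\ds$. Note that the Kurzweil integral $\int_a^b\langle f(s),\rmd g(s)\rangle$ exists by \cite[Theorem~1.9]{KrejciLiero09}, since $f\in BV$ and $g$ is regulated, and satisfies \eqref{est:kurz01}.

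\emph{Uniform bound.} The decisive step is the estimate $\abs{I_h(g)}\le C_f\norm{g}_{L^\infty(a,b;\calW)}$ for all $g\in C([a,b];\calW)$ and all $0<h\le(b-a)/2$, with $C_f:=2\norm{f}_{L^\infty(a,b;\calW^*)}+\Var_{\calW^*}(f,[a,b])$. To obtain it, I would substitute $s\mapsto s-h$ in the part of $I_h(g)$ containing $g(s+h)$ and rearrange to get
\[
 I_h(g)=\frac1h\int_{a+h}^{b-h}\langle f(s-h)-f(s),g(s)\rangle\ds+\frac1h\int_{b-h}^b\langle f(s-h),g(s)\rangle\ds-\frac1h\int_a^{a+h}\langle f(s),g(s)\rangle\ds.
\]
The last two integrals are each bounded by $\norm{f}_{L^\infty}\norm{g}_{L^\infty}$ (here $0<h\le(b-a)/2$ ensures $s-h\in[a,b]$). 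For the first one I would invoke the elementary fact $\int_a^{b-h}\norm{f(s+h)-f(s)}_{\calW^*}\ds\le h\,\Var_{\calW^*}(f,[a,b])$ — which follows by writing $\norm{f(s+h)-f(s)}_{\calW^*}\le V(s+h)-V(s)$ with $V(t):=\Var_{\calW^*}(f,[a,t])$ nondecreasing and integrating the telescoping difference — so that, after a further shift of variables, the first integral is controlled by $\norm{g}_{L^\infty}\Var_{\calW^*}(f,[a,b])$.

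\emph{Smooth $g$ and density.} If $g\in C^1([a,b];\calW)$, then $h^{-1}(g(s+h)-g(s))=\tfrac1h\int_0^h g'(s+r)\,\dr\to g'(s)$ uniformly on $[a,b-h]$ by uniform continuity of $g'$, and since $f$ is bounded this gives $I_h(g)\to\int_a^b\langle f(s),g'(s)\rangle\ds$, which by \cite[Proposition~1.10]{KrejciLiero09} equals the Kurzweil integral $\int_a^b\langle f(s),\rmd g(s)\rangle$. For general $g\in C([a,b];\calW)$ I would pick $g_n\in C^1([a,b];\calW)$ with $\norm{g-g_n}_{L^\infty}\to0$ and split
\[
 \abs{I_h(g)-\int_a^b\langle f,\rmd g\rangle}\le\abs{I_h(g-g_n)}+\abs{I_h(g_n)-\int_a^b\langle f,\rmd g_n\rangle}+\abs{\int_a^b\langle f,\rmd(g_n-g)\rangle}.
\]
By the uniform bound the first term is $\le C_f\norm{g-g_n}_{L^\infty}$, by \eqref{est:kurz01} the third is $\le(\norm{f(a)}_{\calW^*}+\norm{f(b)}_{\calW^*}+\Var_{\calW^*}(f,[a,b]))\norm{g-g_n}_{L^\infty}$, and the middle one vanishes as $h\searrow0$ for each fixed $n$; passing to $\limsup_{h\searrow0}$ and then $n\to\infty$ proves \eqref{eq.kurz-convdiffquot}.

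The main obstacle is the uniform bound in the second paragraph: since the difference quotient cannot be controlled pointwise, the linear-in-$\norm{g}_{L^\infty}$ estimate must be produced by the shift-of-variables rearrangement together with the $BV$ estimate on $\int\norm{f(s+h)-f(s)}\ds$ — essentially the discrete counterpart of the integration-by-parts bound \eqref{est:kurz01}. The remaining steps are routine.
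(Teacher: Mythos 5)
Your proof is correct and follows essentially the same strategy as the paper's: treat $g\in C^1$ first (your uniform convergence of the difference quotient in place of the paper's Lebesgue-theorem argument, both equally valid), then pass to general continuous $g$ by density using a uniform-in-$h$ bound on the difference-quotient functional. Your substitution $s\mapsto s-h$ is just another way of writing the discrete integration-by-parts identity the paper calls the ``product rule for finite differences,'' and your elementary derivation of $\int_a^{b-h}\norm{f(s+h)-f(s)}_{\calW^*}\ds\le h\,\Var_{\calW^*}(f,[a,b])$ is precisely the content of the cited Lemma~\ref{app.lembv2}.
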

In the proof we use the notation $(\triangle_h g)(s):= g(s+h)-g(s)$ for 
$g:[a,b]\to\calW$ and $s,h\in \R$. Observe that the product rule 
$\triangle_h(fg)(s)=(\triangle_h f)(s) g(s+h) + f(s)(\triangle_h g)(s)$ is 
valid. 
\begin{proof}
 Let $f\in BV([a,b];\calW^*)$ and assume first that $g\in C^1([a,b];\calW)$. 
Then \eqref{eq.kurz-convdiffquot} ensues by the Lebesgue convergence theorem 
and \cite[Proposition 1.10]{KrejciLiero09}. Let now $g\in C([a,b];\calW)$ and 
$\varepsilon>0$ be arbitrary. Then there exists $g_\varepsilon\in 
C^1([a,b];\calW)$ such that $\norm{g-g_\varepsilon}_{C([a,b];\calW)}\leq 
\varepsilon$. Hence, for $h>0$ we obtain using the product rule for finite 
differences
\begin{multline*}
 \int_a^{b-h}\langle f(s),h^{-1}\triangle_h(g-g_\varepsilon)(s)\rangle 
\ds 
\\ = \tfrac1h\left(\int_{b-h}^{b}\langle f,g-g_\varepsilon\rangle\ds
 -\int_a^{a+h}\langle f,g-g_\varepsilon\rangle\ds 
 - \int_a^{b-h}\langle \triangle_h 
f,(g-g_\varepsilon)(s+h)\rangle\ds\right),
\end{multline*}
which implies that 
\begin{multline*}
 \abs{\int_a^{b-h}\langle f(s),h^{-1}\triangle_h(g-g_\varepsilon)(s)\rangle 
\ds}
\\
\leq 
\left(2\norm{f}_{L^\infty(a,b);\calW^*)} 
+ h^{-1}\int_a^{b-h}\norm{\triangle_h f}_{\calW^*}\ds \right)
\norm{g-g_\varepsilon}_{C([a,b];\calW) }\,.
\end{multline*}
Thanks to Lemma \ref{app.lembv2}, the right hand side is bounded by 
$2(\norm{f}_{L^\infty((a,b);\calW^*)} + \Var_{\calW^*}(f,[a,b]))\varepsilon$. 
Standard arguments now finish the proof of \eqref{eq.kurz-convdiffquot} for 
arbitrary $g\in C([a,b];\calW)$.
\end{proof}

\section{Miscellaneous of useful tools}
We collect the statements of results useful for our analysis.
\subsection{Lower semicontinuity properties}

The following Proposition is a slight variant of \cite[Lemma 3.1]{MRS09}. 
\begin{proposition}
 \label{app_prop:lsc} 
Let $v_n,v\in L^\infty(0,S;\calV)$ with $v_n\overset{*}{\rightharpoonup}{v}$ in 
$L^\infty(0,S;\calV)$ and  $\delta_n,\delta\in L^1(0,S;[0,\infty))$ with 
$\liminf_{n\to\infty} \delta_n(s)\geq \delta(s)$ for almost all $s$. 
Then for every $\alpha\geq 1$ 
\begin{align}
 \label{app:eq20}
 \liminf_{n\to\infty}\int_0^S\norm{v_n(s)}_\calV^\alpha \delta_n(s)\ds \geq 
\int_0^S 
\norm{v(s)}_\calV^\alpha\delta(s)\ds.
\end{align}
 \end{proposition}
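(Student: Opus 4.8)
The plan is to first absorb the $n$-dependence of the weights $\delta_n$ by a monotone reduction, and then to establish weak$^*$ lower semicontinuity of $v\mapsto\int_0^S\|v\|_\calV^\alpha\,w\,\dd s$ for a \emph{fixed} nonnegative weight $w\in L^1(0,S)$ via a measurable subgradient selection.

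Step 1 (reduction to a fixed weight): set $\tilde\delta_n:=\inf_{m\ge n}\delta_m$. These functions are measurable, satisfy $0\le\tilde\delta_n\le\delta_n$, are nondecreasing in $n$, and $\tilde\delta_n(s)\nearrow\delta_\infty(s):=\liminf_{m\to\infty}\delta_m(s)\ge\delta(s)$ for a.e.\ $s$. Since $v_n\in L^\infty(0,S;\calV)$ and $\delta_n\in L^1(0,S)$, all integrands below lie in $L^1(0,S)$; using $\|v_n(s)\|_\calV^\alpha\ge 0$ one gets, for every $N$ and every $n\ge N$,
\[
\int_0^S\|v_n\|_\calV^\alpha\,\delta_n\,\dd s\ \ge\ \int_0^S\|v_n\|_\calV^\alpha\,\tilde\delta_n\,\dd s\ \ge\ \int_0^S\|v_n\|_\calV^\alpha\,\tilde\delta_N\,\dd s,
\]
so that $\displaystyle\liminf_{n\to\infty}\int_0^S\|v_n\|_\calV^\alpha\,\delta_n\,\dd s\ \ge\ \liminf_{n\to\infty}\int_0^S\|v_n\|_\calV^\alpha\,\tilde\delta_N\,\dd s$ for each fixed $N$.

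Step 2 (weak$^*$ lsc for a fixed weight): let $w\in L^1(0,S)$, $w\ge 0$. Using the Riesz identification $\calV\cong\calV^*$ I would take the selection $\xi(s):=\alpha\|v(s)\|_\calV^{\alpha-2}v(s)$ when $v(s)\ne 0$ and $\xi(s):=0$ otherwise. Then $\xi$ is strongly measurable with $\|\xi(s)\|_{\calV^*}=\alpha\|v(s)\|_\calV^{\alpha-1}\le\alpha\|v\|_{L^\infty(0,S;\calV)}^{\alpha-1}$, hence $\xi\in L^\infty(0,S;\calV^*)$ and $w\xi\in L^1(0,S;\calV^*)$. One has $\xi(s)\in\partial\big(\|\cdot\|_\calV^\alpha\big)(v(s))$ and, by positive $\alpha$-homogeneity, $\langle\xi(s),v(s)\rangle=\alpha\|v(s)\|_\calV^\alpha$; convexity then gives, for a.e.\ $s$, the inequality $\|v_n(s)\|_\calV^\alpha\ge\langle\xi(s),v_n(s)\rangle-(\alpha-1)\|v(s)\|_\calV^\alpha$ (for $\alpha=1$ the last term is absent). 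Multiplying by $w(s)\ge 0$, integrating, and passing to the limit inferior, while using $v_n\overset{*}{\rightharpoonup}v$ in $L^\infty(0,S;\calV)=\big(L^1(0,S;\calV^*)\big)^*$ together with $w\xi\in L^1(0,S;\calV^*)$ and $\int_0^S\|v\|_\calV^\alpha\,w\,\dd s<\infty$, one obtains
\[
\liminf_{n\to\infty}\int_0^S\|v_n\|_\calV^\alpha\,w\,\dd s\ \ge\ \alpha\int_0^S\|v\|_\calV^\alpha\,w\,\dd s-(\alpha-1)\int_0^S\|v\|_\calV^\alpha\,w\,\dd s\ =\ \int_0^S\|v\|_\calV^\alpha\,w\,\dd s.
\]

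Step 3 (conclusion): applying Step 2 with $w=\tilde\delta_N$ and inserting it into Step 1 yields $\displaystyle\liminf_{n\to\infty}\int_0^S\|v_n\|_\calV^\alpha\,\delta_n\,\dd s\ge\int_0^S\|v\|_\calV^\alpha\,\tilde\delta_N\,\dd s$ for every $N$; letting $N\to\infty$ and invoking the monotone convergence theorem ($\tilde\delta_N\nearrow\delta_\infty\ge\delta$, integrand $\ge 0$) gives the claimed inequality \eqref{app:eq20}. The routine technical points are the measurability and $L^\infty$-bound of the subgradient selection $\xi$ in the Hilbert space $\calV$ and the identification of $L^1(0,S;\calV^*)$ as predual, which legitimizes testing the weak$^*$ convergence against $w\xi$; the only genuinely structural point — and the thing that allows the weights $\delta_n$ to vary with $n$ — is the running-infimum trick in Step 1.
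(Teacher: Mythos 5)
Your proof is correct. The paper does not actually prove Proposition~\ref{app_prop:lsc}; it is recorded only as ``a slight variant of [MRS09, Lemma~3.1]'', a type of weak lower semicontinuity result that is usually obtained from Ioffe's theorem for normal convex integrands or, equivalently, by exhausting the convex integrand with countably many affine minorants. Your argument replaces that machinery by two elementary devices, which is a genuine methodological difference. First, the running infimum $\tilde\delta_n=\inf_{m\ge n}\delta_m$ (measurable, $0\le\tilde\delta_N\le\delta_n$ for $n\ge N$, and $\tilde\delta_N\nearrow\liminf_m\delta_m\ge\delta$ a.e.) decouples the $n$-dependence of the weight from that of $v_n$; the price, a monotone passage to the limit, is paid at the end by Beppo Levi, valid because the integrands are nonnegative and the inequality survives even if $\int_0^S\norm{v}_\calV^\alpha\liminf_m\delta_m\ds=+\infty$. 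Second, the explicit measurable subgradient selection $\xi(s)=\alpha\norm{v(s)}_\calV^{\alpha-2}v(s)$ (with $\xi(s)=0$ at $v(s)=0$, still a valid selection also for $\alpha=1$) reduces the fixed-weight lower semicontinuity to testing the weak$^*$ convergence against the single element $w\xi\in L^1(0,S;\calV^*)$; the points that make this licit are exactly the ones you flag: $\norm{\xi(s)}_{\calV^*}=\alpha\norm{v(s)}_\calV^{\alpha-1}$ is essentially bounded so $w\xi\in L^1(0,S;\calV^*)$, $\calV$ is a separable Hilbert space so $L^\infty(0,S;\calV)=\big(L^1(0,S;\calV^*)\big)^*$, and $\int_0^S\norm{v}_\calV^\alpha w\ds<\infty$ so the convexity inequality $\norm{v_n}_\calV^\alpha\ge\langle\xi,v_n\rangle-(\alpha-1)\norm{v}_\calV^\alpha$ (using $\langle\xi(s),v(s)\rangle=\alpha\norm{v(s)}_\calV^\alpha$, valid also at $v(s)=0$) integrates without an $\infty-\infty$ ambiguity. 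Compared with the Ioffe route you lose generality --- the selection is tailored to $\norm{\cdot}_\calV^\alpha$ over a Hilbert space --- but you gain an entirely self-contained and elementary argument, which is all that the appendix needs.
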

The next lemma is cited from 
\cite[Lemma 4.3]{MRS12VarConv}.
\begin{lemma}
\label{app_prop:lsc2}
 Let $I\subset\R$ be a bounded interval and $f,g,f_n,g_n:I\to[0,\infty)$, $n\in 
\N$, measurable functions satisfying 
$
 \liminf_{n\to\infty} f_n(s)\geq f(s)$ for a.a.\ $s\in I$ and  
$g_n\rightharpoonup g$  weakly in $L^1(I)$. 
Then
\[
 \liminf_{n\to\infty}\int_I f_n(s)g_n(s)\ds\geq\int_I f(s)g(s)\ds\,.
\] 
\end{lemma}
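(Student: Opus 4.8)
The plan is to give a self-contained proof by the classical \emph{increasing-envelope} device, since a direct application of Fatou's lemma is unavailable here: the sequences $(f_n)$ and $(g_n)$ are coupled only through their product and neither converges strongly, so one must freeze one factor at a time. (This is the same mechanism behind Proposition~\ref{app_prop:lsc}, adapted to weak $L^1$ rather than weak-$*$ $L^\infty$ convergence.)

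\textbf{Reduction to bounded $f$.} For $M>0$ put $f^M:=f\wedge M$ and $f_n^M:=f_n\wedge M$. Since $x\mapsto x\wedge M$ is continuous and nondecreasing, $\liminf_n f_n^M\ge(\liminf_n f_n)\wedge M\ge f^M$ a.e., so the truncated data satisfy the same hypotheses; and since $g_n\ge0$ we have $\int_I f_ng_n\ds\ge\int_I f_n^Mg_n\ds$. If the lemma is proved for bounded functions, then applying it to $(f_n^M,f^M)$ gives $\liminf_n\int_I f_ng_n\ds\ge\int_I f^Mg\ds$, and letting $M\to\infty$, by monotone convergence (here $g\ge0$ and $f^M\uparrow f$), yields the general claim. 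Hence from now on assume $0\le f\le M$ and $0\le f_n\le M$.

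\textbf{Envelope argument.} Define $h_m:=\inf_{k\ge m}f_k$. Each $h_m$ is measurable (a countable infimum), satisfies $0\le h_m\le M$, is nondecreasing in $m$, and $h_m\uparrow\liminf_k f_k\ge f$ a.e. For every fixed $m$ and every $n\ge m$ one has $f_n\ge h_m$ pointwise, hence, using $g_n\ge0$,
\[
 \int_I f_ng_n\ds\ \ge\ \int_I h_mg_n\ds\qquad\text{for all }n\ge m.
\]
Since $h_m\in L^\infty(I)$ and $g_n\rightharpoonup g$ weakly in $L^1(I)$, the right-hand side converges to $\int_I h_mg\ds$; taking $\liminf_{n\to\infty}$ in the displayed inequality gives $\liminf_n\int_I f_ng_n\ds\ge\int_I h_mg\ds$ for every $m$.

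\textbf{Conclusion.} Now let $m\to\infty$. As $0\le h_m\uparrow\liminf_k f_k$ a.e. and $g\ge0$, monotone convergence gives $\int_I h_mg\ds\to\int_I(\liminf_k f_k)g\ds\ge\int_I fg\ds$, so $\liminf_n\int_I f_ng_n\ds\ge\int_I fg\ds$ in the bounded case, and the general case follows from the reduction above. The only point that requires care is that weak convergence in $L^1(I)$ may be tested only against functions in $L^\infty(I)$; this is exactly why the preliminary truncation and the use of the \emph{bounded} envelopes $h_m$ are needed, and it is the only genuine obstacle — the remaining steps are routine measure theory.
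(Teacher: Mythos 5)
Your proof is correct. The paper does not give an argument for this lemma at all; it simply cites it as \cite[Lemma 4.3]{MRS12VarConv}, so there is no in-paper proof to compare against. Your argument is the standard truncation-and-monotone-envelope device: first truncate $f$ and $f_n$ at level $M$ (legitimate because $g_n\ge 0$ lets you discard $\int(f_n-f_n^M)g_n\ge0$, and $\liminf_n(f_n\wedge M)\ge(\liminf_n f_n)\wedge M$ since $x\mapsto x\wedge M$ is continuous and nondecreasing), then freeze the first factor by introducing the frozen infimum $h_m=\inf_{k\ge m}f_k$, which lies in $L^\infty(I)$ after truncation and hence is a legitimate test function for the weak $L^1$-convergence $g_n\rightharpoonup g$, and finally let $m\to\infty$ and $M\to\infty$ via monotone convergence, using again $g\ge0$. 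All the steps check out, and you correctly identified the one genuine obstacle (the envelopes must be bounded to be paired with weak $L^1$-convergent sequences), which the initial truncation removes. One could streamline by truncating the envelopes $h_m\wedge M$ directly rather than $f_n$ at the outset, but the logic is the same.
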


\subsection{Absolutely continuous functions and $BV$-functions}

\label{app.bvac} 
We follow \cite[Section 2.2]{MRS16}. Let $\calX$ be a Banach space and let 
$\calR:\calX\to\R$ be convex, lower semicontinuous, positively homogeneous of 
degree one and with \eqref{eq.Mief100}. For $1\leq p\leq \infty$, we define the 
set of $p$-absolutely continuous functions (related to $\calR$) as 
\begin{multline}
\label{eq.AC01}
 AC^p([a,b];\calX):=\big\{z:[a,b]\to\calX\,;\,
 \exists m\in L^p((a,b)),\, 
m\geq 0,\, \forall s_1<s_2\in [a,b]: \quad
\big.
\\
\big.
\calR(z(s_2)-z(s_1))\leq\int_{s_1}^{s_2}m(r)\dr\big\}\,.
\end{multline}
Observe that thanks to \eqref{eq.Mief100} this set coincides with the one 
defined with $\norm{\cdot}_\calX$ instead of $\calR$. Let $z\in 
AC^p([a,b];\calX)$. It is shown in \cite[Prop.\ 2.2]{MRS08-metricapproach}, 
\cite[Thm.\ 1.1.2]{AGS05} that for almost every $s\in [a,b]$ the limits
\begin{align*}
 \calR[z'](s):=\lim_{h\searrow0}\calR((z(s+h) - z(s))/h) = 
\lim_{h\searrow0}\calR((z(s) - z(s-h))/h)
\end{align*}
exist and are equal, that $\calR[z']\in L^p((a,b))$ and that $\calR[z']$ is the 
smallest function for which the integral estimate in \eqref{eq.AC01} is valid. 

Let further $\Var_\calR(z;[a,b])$ denote the $\calR$-variation of $z:[a,b]\to 
\calX$, i.e.\ 
\[
\Var_\calR(z;[a,b]):=\sup_{\text{partitions of $[a,b]$}} 
\sum_{i=1}^m \calR(z(s_i)-z(s_{i-1})).
\]
A proof for the next Lemma can be found in  
\cite[Lemma C.1]{KneesThomas2018}. 
\begin{lemma}
 \label{app.lembv1}
For all $p\in (1,\infty]$ and 
$z\in AC^p([a,b];\calX)$ we have 
\begin{align}
 \label{eq.AC02}
 \Var_\calR(z,[a,b])=\int_a^b\calR[z'](s)\ds.
\end{align}
\end{lemma}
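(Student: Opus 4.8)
The plan is to prove the two inequalities in \eqref{eq.AC02} separately, relying on the characterization recalled just above the lemma (from \cite[Prop.\ 2.2]{MRS08-metricapproach}, \cite[Thm.\ 1.1.2]{AGS05}): for $z\in AC^p([a,b];\calX)$ the limit $\calR[z'](s)=\lim_{h\searrow0}\calR((z(s+h)-z(s))/h)$ exists for a.a.\ $s$, belongs to $L^p((a,b))$, and is the smallest nonnegative function $m$ for which $\calR(z(s_2)-z(s_1))\le\int_{s_1}^{s_2}m(r)\dr$ holds for all $s_1<s_2$ in $[a,b]$.

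First I would show $\Var_\calR(z,[a,b])\le\int_a^b\calR[z'](s)\ds$. Given an arbitrary partition $a=s_0<s_1<\dots<s_m=b$, apply the admissibility of $\calR[z']$ in \eqref{eq.AC01} on each subinterval, so that $\calR(z(s_i)-z(s_{i-1}))\le\int_{s_{i-1}}^{s_i}\calR[z'](r)\dr$; summing over $i$ and taking the supremum over all partitions gives the inequality. Since $p>1$ and $(a,b)$ is bounded, $\calR[z']\in L^p((a,b))\subset L^1((a,b))$, so in particular $\Var_\calR(z,[a,b])<\infty$.

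For the converse I would introduce the variation function $V(s):=\Var_\calR(z,[a,s])$, $s\in[a,b]$, which by the previous step is finite, and which is non-decreasing with $V(a)=0$ and $V(b)=\Var_\calR(z,[a,b])$. Additivity of the $\calR$-variation gives $\calR(z(s+h)-z(s))\le V(s+h)-V(s)$ whenever $a\le s<s+h\le b$; dividing by $h$ and using the positive one-homogeneity of $\calR$ yields $\calR((z(s+h)-z(s))/h)\le (V(s+h)-V(s))/h$. Letting $h\searrow0$, the left-hand side tends to $\calR[z'](s)$ for a.a.\ $s$ by the quoted characterization, while the right-hand side tends to $V'(s)$ for a.a.\ $s$ by Lebesgue's differentiation theorem for monotone functions; hence $\calR[z'](s)\le V'(s)$ for a.a.\ $s$. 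Integrating and invoking the classical inequality $\int_a^bV'(s)\ds\le V(b)-V(a)$ for non-decreasing $V$ gives $\int_a^b\calR[z'](s)\ds\le\Var_\calR(z,[a,b])$, and together with the first step this proves \eqref{eq.AC02}.

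The only genuinely delicate ingredient is the a.a.\ existence, the $L^p$-integrability, and the minimality of $\calR[z']$, but this is precisely the metric-derivative result cited in the excerpt, so it may be used as a black box here; the remainder of the argument uses only elementary properties of monotone real functions. The one point that must not be skipped is verifying that $\Var_\calR(z,[a,b])$ is finite before working with $V$, which is exactly why the ``$\le$'' direction is carried out first.
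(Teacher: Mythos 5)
Your proof is correct. The paper itself does not spell out an argument for Lemma \ref{app.lembv1} but only cites \cite[Lemma C.1]{KneesThomas2018}, so a line-by-line comparison is not possible; nevertheless, the two-step argument you give (upper bound from admissibility of $\calR[z']$ in \eqref{eq.AC01}, lower bound via the monotone variation function $V(s)=\Var_\calR(z,[a,s])$, Lebesgue differentiation, and $\int_a^b V'\le V(b)-V(a)$) is exactly the standard proof of this identity and is sound. One small point worth making explicit: the step $\calR(z(s+h)-z(s))\le V(s+h)-V(s)$ uses only the super-additivity $\Var_\calR(z,[a,s])+\Var_\calR(z,[s,s+h])\le \Var_\calR(z,[a,s+h])$ (trivial by concatenating partitions) together with the two-point partition of $[s,s+h]$, so no issue arises even though $\calR$ need not be symmetric; if you wanted full additivity of $\Var_\calR$ you would invoke the subadditivity of $\calR$ (convex and positively one-homogeneous) to justify that refining a partition does not decrease the associated sum. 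Also note that your argument does not really use $p>1$ beyond $\calR[z']\in L^1$, which is harmless since a proof valid for $p\ge 1$ in particular covers the stated range.
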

The following Lemma is proved in \cite[Theorem 2.20]{Leoni2nded17}:
\begin{lemma}
\label{app.lembv2}
 For every $f\in BV([a,b],\calX)$ we have
 \begin{align}
 \label{eq.lemmbv2}
  \sup_{0<h<(b-a)}h^{-1}\int_a^{b-h}\norm{f(s+h) - f(s)}_\calX\leq 
\Var_\calX(f,[a,b]).
 \end{align}
\end{lemma}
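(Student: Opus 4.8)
The plan is to reduce \eqref{eq.lemmbv2} to two elementary properties of the variation: its additivity with respect to concatenation of intervals, and the monotonicity of the associated variation function. First I would introduce $V\colon[a,b]\to[0,\infty)$, $V(x):=\Var_\calX(f,[a,x])$. Since $f\in BV([a,b];\calX)$, the function $V$ is finite and nondecreasing, with $V(a)=0$ and $V(b)=\Var_\calX(f,[a,b])$; being monotone, $V$ is Borel measurable. Moreover $f$, being of bounded variation, is regulated, hence has at most countably many discontinuities and is strongly measurable, so for each fixed $h\in(0,b-a)$ the map $s\mapsto\norm{f(s+h)-f(s)}_\calX$ is bounded and measurable on $[a,b-h]$ and the integral in \eqref{eq.lemmbv2} makes sense as a Lebesgue integral.

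The key pointwise inequality is the additivity of the $\calX$-valued variation: for $a\le s<s+h\le b$ one has
\[
\Var_\calX(f,[a,s+h])=\Var_\calX(f,[a,s])+\Var_\calX(f,[s,s+h]),
\]
whence $\norm{f(s+h)-f(s)}_\calX\le\Var_\calX(f,[s,s+h])=V(s+h)-V(s)$.

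Next I would introduce the (finite, nonnegative) Lebesgue--Stieltjes measure $\mu$ on $[a,b]$ associated with the nondecreasing function $V$, normalized so that $\mu((c,d])=V(d)-V(c)$ for $a\le c<d\le b$ and $\mu(\{a\})=0$; thus $\mu([a,b])=V(b)-V(a)=\Var_\calX(f,[a,b])$. Then $V(s+h)-V(s)=\mu((s,s+h])$, and integrating the pointwise bound and applying Tonelli's theorem gives
\[
\int_a^{b-h}\norm{f(s+h)-f(s)}_\calX\ds
\le\int_a^{b-h}\mu((s,s+h])\ds
=\int_{[a,b]}\mathcal{L}^1\big(\{\,s\in[a,b-h]:x-h\le s<x\,\}\big)\,\d\mu(x).
\]
For each $x\in[a,b]$ the inner Lebesgue measure is at most $\mathcal{L}^1([x-h,x))=h$, so the right-hand side is bounded by $h\,\mu([a,b])=h\,\Var_\calX(f,[a,b])$. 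Dividing by $h$ and taking the supremum over $h\in(0,b-a)$ yields \eqref{eq.lemmbv2}.

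The only nontrivial ingredient is the additivity of the variation used in the second paragraph; this is a standard property of $\calX$-valued $BV$ functions, proved by refining an arbitrary partition of $[a,s+h]$ so as to include the point $s$, and is essentially the content of the cited reference. Everything else is routine. (If one prefers to avoid measure-theoretic language altogether, the third paragraph can be replaced by an elementary telescoping estimate: for $t\in[0,h)$ sum the pointwise bound along the arithmetic progression $a+t,\,a+t+h,\,a+t+2h,\dots$ lying in $[a,b]$, which is controlled by $\Var_\calX(f,[a,b])$, and then integrate in $t\in[0,h)$ using the substitution $s=a+t+kh$; this recovers $\int_a^{b-h}\norm{f(s+h)-f(s)}_\calX\ds\le h\,\Var_\calX(f,[a,b])$.)
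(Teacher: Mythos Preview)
Your argument is correct. The paper itself does not give a proof of this lemma; it simply cites \cite[Theorem~2.20]{Leoni2nded17}. Your Lebesgue--Stieltjes/Tonelli computation is clean and valid, and the telescoping alternative you sketch at the end is essentially the argument one finds in Leoni's book (slice the interval into arithmetic progressions of step $h$, bound each chain by the total variation, and integrate over the offset). Either route establishes \eqref{eq.lemmbv2}; the measure-theoretic version has the virtue of making the estimate $\mathcal L^1(\{s\in[a,b-h]:x-h\le s<x\})\le h$ transparent, while the telescoping version is perhaps closer in spirit to the cited reference and avoids constructing~$\mu$.
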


\subsection{A combination of Helly's Theorem and the Ascoli-Arzel{\`a} Theorem}
The general statements of the following theorem can be found in  
\cite{MRS16,AGS05}. For a 
proof tailored to our specific situation we refer to \cite[Proposition 
D.1]{KneesThomas2018}. 
\begin{proposition} 
\label{prop.hellyarzasc-version2}
Let $\calZ$ be a reflexive Banach space, $\calV,\calX$ further Banach spaces 
such that  \eqref{eq.Mief000} is satisfied and assume that 
$\calR:\calX\to [0,\infty)$ complies with  \eqref{eq.Mief100}.   
\begin{itemize}
 \item[(a)] The set $AC^1([a,b];\calX)\cap L^\infty((a,b);\calZ)$ is 
contained in $C([a,b];\calV)$ and there exists $C>0$ such that for all $z\in 
AC^1([a,b];\calX)\cap L^\infty((a,b);\calZ)$ we have 
\[
\norm{z}_{C([a,b];\calV)}\leq  C(\norm{z}_{L^\infty((a,b);\calZ)} 
+\norm{\calR[z']}_{L^1((a,b))}).
\]
\item[(b)] Let $(z_n)_n\subset AC^\infty([a,b];\calX)\cap 
L^\infty((a,b);\calZ)$ be uniformly bounded in the sense that 
$A:=\sup_n\norm{z_n}_{L^\infty((a,b);\calZ)}<\infty$ and 
$B:=\sup_n\norm{\calR[z']}_{L^\infty((a,b))}<\infty$.

Then there exists $z\in AC^\infty([a,b];\calX)\cap 
L^\infty((a,b);\calZ)$ and a (not relabeled) subsequence $(z_n)_n$ such that 
\begin{align}
 z_n&\to z \text{ uniformly in }C([a,b];\calV),
\label{eq:arzelaascolihellymod02}
\\
\forall t\in [a,b]\quad  z_n(t)&\rightharpoonup z(t) \text{ weakly in }\calZ.
\label{eq:arzelaascolihellymod01}
\end{align}
\item[(c)] It is $L^\infty((a,b);\calZ)\cap 
C([a,b];\calV)\subset C_\text{weak}([a,b];\calZ)$.
\end{itemize}
\end{proposition}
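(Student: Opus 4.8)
The plan is to obtain all three parts from two classical ingredients combined with the norm equivalence \eqref{eq.Mief100}, which lets us switch freely between $\calR(\cdot)$ and $\norm{\cdot}_\calX$: namely Ehrling's interpolation inequality, which holds because $\calZ$ embeds compactly into $\calV$ and $\calV$ continuously into $\calX$ (so that for every $\eta>0$ there is $C_\eta>0$ with $\norm{v}_\calV\le \eta\norm{v}_\calZ+C_\eta\norm{v}_\calX$ for all $v\in\calZ$; this is itself proved by a normalization-and-contradiction argument using the compact embedding), and the Arzel\`a--Ascoli theorem for maps into the metric space $\calV$, together with reflexivity of $\calZ$.

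For part (a), I would first note that the definition of $AC^1$ in \eqref{eq.AC01} together with $\norm{\cdot}_\calX\le c^{-1}\calR(\cdot)$ gives $\norm{z(s_2)-z(s_1)}_\calX\le c^{-1}\int_{s_1}^{s_2}\calR[z'](r)\dr$, so $z$ is absolutely continuous into $\calX$; applying Ehrling's inequality to $v=z(s_2)-z(s_1)$, balancing the small parameter $\eta$ against the $L^\infty(\calZ)$-bound and invoking absolute continuity of the Lebesgue integral for the remaining $\calX$-term, then yields $z\in C([a,b];\calV)$. The quantitative estimate is obtained by fixing $\eta=1$ and bounding $\norm{z(s)}_\calV$ through $\norm{z(a)}_\calZ$, $\norm{z}_{L^\infty((a,b);\calZ)}$ and $\norm{\calR[z']}_{L^1((a,b))}$.

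For part (b), the uniform bounds $A,B$ together with part (a) give $\sup_n\norm{z_n}_{C([a,b];\calV)}<\infty$, and --- this is the place where the $L^\infty$-bound $B$ on $\calR[z_n']$ (rather than merely an $L^1$-bound) is essential --- equicontinuity in $C([a,b];\calV)$: indeed $\calR(z_n(s_2)-z_n(s_1))\le B\abs{s_2-s_1}$, hence $\norm{z_n(s_2)-z_n(s_1)}_\calV\le 2\eta A + C_\eta c^{-1}B\abs{s_2-s_1}$. Since for each fixed $t$ the bounded set $\{z_n(t)\}\subset\calZ$ is precompact in $\calV$, Arzel\`a--Ascoli produces a subsequence converging uniformly in $C([a,b];\calV)$ to some $z$. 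Weak lower semicontinuity of $\norm{\cdot}_\calZ$ gives $z\in L^\infty((a,b);\calZ)$, and passing to the limit in $\calR(z_n(s_2)-z_n(s_1))\le B\abs{s_2-s_1}$ (using continuity of $\calR$ on $\calX$ and $\calV\subset\calX$) gives $z\in AC^\infty([a,b];\calX)$. Finally, $z_n(t)\rightharpoonup z(t)$ weakly in $\calZ$ follows from a subsequence argument: any weak-$\calZ$ accumulation point of the bounded sequence $(z_n(t))_n$ is also a weak-$\calV$ accumulation point, hence must equal the strong-$\calV$ limit $z(t)$, and reflexivity of $\calZ$ then forces the whole sequence to converge weakly.

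Part (c) is essentially the last step of (b) in isolation: for $z\in L^\infty((a,b);\calZ)\cap C([a,b];\calV)$ and $s_n\to s$ one has $z(s_n)\to z(s)$ strongly in $\calV$ while $(z(s_n))_n$ is bounded in the reflexive space $\calZ$, which forces weak-$\calZ$ convergence $z(s_n)\rightharpoonup z(s)$, i.e. $z\in C_\text{weak}([a,b];\calZ)$. I do not foresee any real obstacle; the only point requiring care is the one flagged above, that the equicontinuity in (b) genuinely uses the uniform $L^\infty$-bound on $\calR[z_n']$ and not just the $L^1$-bound that would follow from part (a) alone.
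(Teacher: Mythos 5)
The paper does not actually give a proof of this proposition; it defers to \cite[Prop.~D.1]{KneesThomas2018} (and to \cite{MRS16,AGS05} for the general statement), so a line-by-line comparison is not possible. That said, your strategy---Ehrling interpolation $\norm{v}_\calV\le\eta\norm{v}_\calZ+C_\eta\norm{v}_\calX$ from $\calZ\Subset\calV\subset\calX$, Arzel\`a--Ascoli in $C([a,b];\calV)$ using equicontinuity in the $\calV$-norm and pointwise precompactness in $\calV$, and reflexivity of $\calZ$ with uniqueness of weak accumulation points to upgrade to pointwise weak convergence in $\calZ$---is exactly the standard route for this type of generalized Helly/Arzel\`a--Ascoli statement and is, in all essentials, what the cited references do. Your observation that (b) genuinely needs the $L^\infty$-bound on $\calR[z_n']$, not just the $L^1$-bound that feeds (a), is correct and worth flagging.

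One point you gloss over and should make explicit: membership $z\in L^\infty((a,b);\calZ)$ a priori only gives $z(t)\in\calZ$ with $\norm{z(t)}_\calZ\le\norm{z}_{L^\infty}$ for \emph{almost every} $t$, so applying Ehrling to $z(s_2)-z(s_1)$ in (a), and asserting in (c) that $(z(s_n))_n$ is bounded in $\calZ$ for an arbitrary sequence $s_n\to s$, both require a short preliminary step: first argue on the full-measure set of times where the $\calZ$-bound holds, and then use the $\calV$-continuity just established together with reflexivity of $\calZ$ (boundedness plus strong $\calV$-convergence along good times forces a weak $\calZ$-limit equal to $z(t)$) to conclude that in fact $z(t)\in\calZ$ with the same bound for \emph{every} $t\in[a,b]$. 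Once that is spelled out, your argument for all three parts is complete and correct.
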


\subsection{Chain rule}

The following chain rule is proved in \cite[Prop.~E.1]{KneesThomas2018}.

\begin{proposition}
 \label{prop.chainrules}
Let $z\in H^1((0,T);\calV)\cap L^\infty((0,T);\calZ)$ and assume that  
$\rmD\calI(z(\cdot))\in L^\infty((0,T);\calV^*)$. Then for almost all $t$, 
the mapping $t\mapsto\calI(z(t))$ is differentiable and we have the identity 
\begin{align*}
 \frac{\rmd}{\rmd t}\calI(z(t))=\langle A z(t),\dot 
z(t)\rangle_{\calV^*,\calV} + \langle\rmD\calF(z(t)),\dot 
z(t)\rangle_{\calV^*,\calV}\,.
\end{align*}

Integrated version of the chain rule: 
Let $z\in W^{1,1}((0,T);\calV)\cap 
L^\infty((0,T);\calZ)$ with  $\rmD\calI(z(\cdot))\in L^\infty((0,T);\calV^*)$ 
and assume that $t\mapsto\calI(z(t))$ is continuous on $[0,T]$. Then for all 
 $t_1<t_2\in [0,T]$
\begin{align}
\label{eq.int.chain.rule}
 \calI(z(t_2))-\calI(z(t_1))=\int_{t_1}^{t_2}\langle \rmD\calI(z(r)),\dot 
z(r)\rangle_{\calV^*,\calV}\dr.
\end{align}
\end{proposition}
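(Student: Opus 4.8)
The plan is to reduce the identity to the classical chain rule for smooth curves by time–mollification, treating the quadratic part $\tfrac12\langle Az,z\rangle$ of $\calI$ and the lower–order part $\calF$ separately. First I would record the a priori regularity: since $z\in L^\infty((0,T);\calZ)$, the growth bound \eqref{ass.F01} gives $\rmD\calF(z(\cdot))\in L^\infty((0,T);\calV^*)$, so the hypothesis $\rmD\calI(z(\cdot))\in L^\infty((0,T);\calV^*)$ forces $Az(\cdot)=\rmD\calI(z(\cdot))-\rmD\calF(z(\cdot))\in L^\infty((0,T);\calV^*)$. Moreover $z\in W^{1,1}((0,T);\calV)\hookrightarrow C([0,T];\calV)$ together with $z\in L^\infty((0,T);\calZ)$ and reflexivity of $\calZ$ yields $z\in C_\text{weak}([0,T];\calZ)$, hence $z(t)\in\calZ$ with $\norm{z(t)}_\calZ\le\rho:=\norm{z}_{L^\infty((0,T);\calZ)}$ for \emph{every} $t$, so that $\calI(z(t))$, $\calF(z(t))$ and $\langle Az(t),z(t)\rangle$ are defined pointwise.

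For the lower–order term I would use the mean value identity $\calF(z(t_2))-\calF(z(t_1))=\int_0^1\langle\rmD\calF(w_\theta),z(t_2)-z(t_1)\rangle\,\d\theta$ with $w_\theta:=z(t_1)+\theta(z(t_2)-z(t_1))$, $\norm{w_\theta}_\calZ\le\rho$. Bounding $\norm{\rmD\calF(w_\theta)}_{\calV^*}\le C(1+\rho^q)$ via \eqref{ass.F01} and $\norm{z(t_2)-z(t_1)}_\calV\le\int_{t_1}^{t_2}\norm{\dot z}_\calV\,\dr$ shows $t\mapsto\calF(z(t))$ is absolutely continuous on $[0,T]$. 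For its derivative, at a Lebesgue point $t$ of $\dot z$ in $\calV$ one has $h^{-1}(z(t+h)-z(t))\to\dot z(t)$ strongly in $\calV$, while $w_\theta^h:=z(t)+\theta(z(t+h)-z(t))$ is bounded in $\calZ$ and converges to $z(t)$ in $\calV$, hence weakly in $\calZ$; by \eqref{ass.fweakconv} and the uniform $\calV^*$–bound, $\rmD\calF(w_\theta^h)\rightharpoonup\rmD\calF(z(t))$ weakly in $\calV^*$. Passing to the limit in the mean value identity (dominated convergence in $\theta$) gives $\tfrac{\rmd}{\rmd t}\calF(z(t))=\langle\rmD\calF(z(t)),\dot z(t)\rangle$ for almost all $t$.

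For the quadratic part $Q(t):=\tfrac12\langle Az(t),z(t)\rangle$ I would mollify $z$ in time to get a smooth $\calZ$–valued curve $z_\delta$ with $\norm{z_\delta(t)}_\calZ\le\rho$, $z_\delta\to z$ in $C([0,T];\calV)$, $\dot z_\delta\to\dot z$ in $L^1((0,T);\calV)$ and $Az_\delta=(Az)_\delta\to Az$ in $L^1((0,T);\calV^*)$ with a uniform $L^\infty((0,T);\calV^*)$–bound. For the smooth curve the classical chain rule and the symmetry of $A$ (cf.\ \eqref{eq.Mief0001}) give $\tfrac{\rmd}{\rmd t}\tfrac12\langle Az_\delta(t),z_\delta(t)\rangle=\langle Az_\delta(t),\dot z_\delta(t)\rangle$, i.e.\ $\tfrac12\langle Az_\delta(t_2),z_\delta(t_2)\rangle-\tfrac12\langle Az_\delta(t_1),z_\delta(t_1)\rangle=\int_{t_1}^{t_2}\langle Az_\delta,\dot z_\delta\rangle\,\ds$. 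Letting $\delta\to0$: on the right I would split $\langle(Az)_\delta,\dot z_\delta\rangle-\langle Az,\dot z\rangle=\langle(Az)_\delta,\dot z_\delta-\dot z\rangle+\langle(Az)_\delta-Az,\dot z\rangle$ and control the first term by the $L^\infty((0,T);\calV^*)$–bound and the second by dominated convergence using $\calV^*$–Lebesgue points of $Az$; the boundary terms converge for a.e.\ $t_1,t_2$ because $(Az)_\delta(t_i)\to Az(t_i)$ in $\calV^*$ at Lebesgue points while $z_\delta(t_i)\to z(t_i)$ in $\calV$, so $\langle(Az)_\delta(t_i),z_\delta(t_i)\rangle_{\calV^*,\calV}\to\langle Az(t_i),z(t_i)\rangle_{\calV^*,\calV}$. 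Hence $Q(t_2)-Q(t_1)=\int_{t_1}^{t_2}\langle Az(s),\dot z(s)\rangle\,\ds$ for a.e.\ $t_1<t_2$, so $Q$ coincides almost everywhere with an absolutely continuous function with a.e.\ derivative $\langle Az,\dot z\rangle$.

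Finally I would assemble the pieces: adding the two identities and using $\langle Az,\dot z\rangle+\langle\rmD\calF(z),\dot z\rangle=\langle\rmD\calI(z),\dot z\rangle$ gives, for $z\in H^1((0,T);\calV)$, that $\calI(z(\cdot))=Q+\calF(z(\cdot))$ coincides a.e.\ with an absolutely continuous function whose derivative is $\langle\rmD\calI(z(t)),\dot z(t)\rangle$, the first assertion. For the integrated version the extra continuity hypothesis on $\calI(z(\cdot))$ is exactly what is needed: $\calF(z(\cdot))$ is continuous by the second paragraph, hence $Q=\calI(z(\cdot))-\calF(z(\cdot))$ is continuous, and a continuous function equal almost everywhere to an absolutely continuous one coincides with it; thus $Q$, and therefore $\calI(z(\cdot))$, is absolutely continuous on $[0,T]$ and the fundamental theorem of calculus yields \eqref{eq.int.chain.rule}. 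The main obstacle will be the passage $\delta\to0$ in the quadratic boundary terms $\langle Az_\delta(t),z_\delta(t)\rangle$: because $Az$ is only $L^\infty$ in time (and need not be weakly continuous) and $z_\delta(t)\to z(t)$ only in $\calV$ rather than in $\calZ$, one must pair in the $\calV^*$–$\calV$ duality and invoke Lebesgue points, which is also why the integrated statement must assume continuity of $\calI(z(\cdot))$ to select the right representative; the weak continuity \eqref{ass.fweakconv} of $\rmD\calF$ is the other essential ingredient, needed because \eqref{ass.F01} only controls $\rmD\calF$–increments in the $\calZ$–norm, which is unavailable along the $\calV$–curve.
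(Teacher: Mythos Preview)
The paper does not prove this proposition; it cites \cite[Prop.~E.1]{KneesThomas2018}. Your strategy---splitting $\calI=Q+\calF$, handling $\calF$ via the mean-value identity together with the weak--weak continuity \eqref{ass.fweakconv}, and handling $Q(t)=\tfrac12\langle Az(t),z(t)\rangle$ by time-mollification---is sound and close to how such chain rules are usually proved in this setting.

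There is, however, a genuine gap in your argument for the \emph{first} assertion. From the mollification you only obtain
\[
Q(t_2)-Q(t_1)=\int_{t_1}^{t_2}\langle Az(s),\dot z(s)\rangle_{\calV^*,\calV}\ds
\quad\text{for a.e.\ }t_1<t_2,
\]
i.e.\ $Q$ coincides \emph{almost everywhere} with an absolutely continuous function. You then claim this gives the first assertion, namely that $t\mapsto\calI(z(t))$ is differentiable for almost every $t$. That implication is false in general: a function that agrees a.e.\ with an AC function need not be differentiable anywhere (change it on a dense null set). You recognise this issue for the integrated version, where you correctly invoke the \emph{assumed} continuity of $\calI(z(\cdot))$ to force $Q$ to coincide with its AC representative; but for the $H^1$ part you have no such hypothesis and must supply continuity of $Q$ yourself.

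The fix is short and uses ingredients you already set up. From $z\in C_\text{weak}([0,T];\calZ)$ one has $Az\in C_\text{weak}([0,T];\calZ^*)$; combined with the a.e.\ bound $\norm{Az(t)}_{\calV^*}\le M$ and density of $\calZ$ in $\calV$, this yields $Az(t)\in\calV^*$ with $\norm{Az(t)}_{\calV^*}\le M$ for \emph{every} $t\in[0,T]$, and in fact $Az\in C_\text{weak}([0,T];\calV^*)$. Then, by symmetry of $A$,
\[
Q(t_2)-Q(t_1)=\tfrac12\langle Az(t_2)+Az(t_1),\,z(t_2)-z(t_1)\rangle_{\calV^*,\calV},
\]
so $|Q(t_2)-Q(t_1)|\le M\int_{t_1}^{t_2}\norm{\dot z}_\calV\,\dr$ and $Q$ is genuinely absolutely continuous on $[0,T]$. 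Now your mollification argument identifies its a.e.\ derivative as $\langle Az(t),\dot z(t)\rangle$, and together with the $\calF$-part (which you showed is AC) you get a.e.\ differentiability of $\calI(z(\cdot))$ with the stated derivative. Incidentally, this direct estimate also makes the Lebesgue-point discussion of the boundary terms $\langle Az_\delta(t_i),z_\delta(t_i)\rangle$ unnecessary: once $Q$ is known to be AC, it suffices to identify its derivative in $L^1$, which your limit in the integrand already does.
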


\end{appendix}

\section*{Acknowledgments} 
The authors are grateful to Alex Mielke for the inspiring discussions and the 
permanent scientific support.
This research has been partially funded by Deutsche 
Forschungsgemeinschaft 
(DFG) through the Priority Programme SPP 1962 Non-smooth and 
Complementarity-based Distributed Parameter Systems: Simulation and 
Hierarchical Optimization, Project P13 Simulation and Optimization of 
Rate-Independent Systems with Non-Convex Energies. 
CZ is a member of GNAMPA-INdAM. DK acknowledges the kind 
hospitality of the DISMA, Politecnico di Torino, and CZ acknowledges the kind 
hospitality of the University of Kassel.

\small

\end{document}